\newtheorem{theorem}{Theorem}[section]
\newtheorem{lemma}[theorem]{Lemma}
\newtheorem{proposition}[theorem]{Proposition}
\newtheorem{corollary}[theorem]{Corollary}
\newtheorem{definition}{Definition}[section]
\newtheorem{example}[theorem]{Example}
\numberwithin{equation}{section}
\renewcommand{\baselinestretch}{1.0}
\begin{document}

\title{Measure of Self-Affine Sets and Associated Densities \footnotetext{Math Subject Classifications. 28A78, 28A80.}\footnotetext{Keywords. Self-affine sets,  Lebesgue measure,  Hausdorff measure, upper Beurling density.}
\footnotetext{Email:xiaoyefu@gmail.com (X. Y. Fu), gabardo@mcmaster.ca (J. P. Gabardo).} \footnotetext{Tel: 00852-3943-1984 (X. Y. Fu), 1-905-525-9140, ext. 23416 (J. P. Gabardo).}}
\author{Xiaoye Fu, Jean-Pierre Gabardo\\
    \small Department of Mathematics, The Chinese University of Hong Kong, Hong Kong\\
     \small Department of Mathematics and Statistics, McMaster University, Hamilton, O.N.\\
     \small  L8S 4K1, Canada}

\date{}
\maketitle

\begin{abstract}
Let $B$ be an $n\times n$ real expanding matrix and $\mathcal{D}$ be a finite subset of $\mathbb{R}^n$ 
with $0\in\mathcal{D}$. The self-affine set $K=K(B,\mathcal{D})$ is the unique compact set satisfying the 
set-valued equation $BK=\displaystyle\bigcup_{d\in\mathcal{D}}(K+d)$. In the case where 
$\text{card}(\mathcal{D})=\lvert\det B\rvert,$ we relate the Lebesgue measure of $K(B,\mathcal{D})$ to the 
upper Beurling density of the associated measure 
$\mu=\lim\limits_{s\to\infty}\sum\limits_{\ell_0,\dotsc,\ell_{s-1}\in\mathcal{D}}\delta_{\ell_0+B\ell_1+\dotsb+B^{s-1}\ell_{s-1}}.$ 
If, on the other hand, $\text{card}(\mathcal{D})<\lvert\det B\rvert$ and $B$ is a similarity matrix, we relate the Hausdorff measure 
$\mathcal{H}^s(K)$, where $s$ is the similarity dimension of $K$, to a corresponding notion of upper density for the measure $\mu$.
\end{abstract}

\renewcommand{\baselinestretch}{1.0}

\section{Introduction}
 Let $M_n(\mathbb{R}) \ (M_n(\mathbb{Z}))$ be the set of $n\times n$ matrices with real (integer) entries. 
Let $B\in M_n(\mathbb{R})$ be an \emph{expanding matrix}, i.e. all its eigenvalues $\lambda_i$ satisfy 
$\lvert\lambda_i\rvert>1$ and let $\mathcal{D}\subseteq\mathbb{R}^n$ be a finite set of 
distinct real vectors with $0\in\mathcal{D}$. We call $\mathcal{D}$ a \emph{digit set} and $(B,\mathcal{D})$ a \emph{self-affine pair}.
Let 
$$f_d(x)=B^{-1}(x+d), \ d\in\mathcal{D}.$$ An important property of these maps is that they are contractive with respect to a suitable norm on $\mathbb{R}^n$ 
(see \cite{LWG}). The family of mappings $\{f_d(x)\}_{d\in\mathcal{D}}$ is called an \emph{iterated function system} (IFS). It is well-known that there exists a unique non-empty compact set $K:=K(B,\mathcal{D})$ satisfying $K=\bigcup\limits_{d\in\mathcal{D}}f_d(K)$, or equivalently, $BK=\displaystyle\bigcup_{d\in\mathcal{D}}(K+d).$
We call this set $K$ the \emph{self-affine set} determined by the self-affine pair $(B,\mathcal{D})$. Define
$$\mathcal{D}_s:=\Big\{\displaystyle\sum_{j=0}^{s-1}B^j\ell_j:\ell_j\in\mathcal{D},j\geqslant 0\Big\} 
\ {\rm{for}} \ s\geqslant 1 \ {\rm{and}} \ \mathcal{D}_{\infty}:=\displaystyle\bigcup_{s=1}^{\infty}\mathcal{D}_s.$$
The inclusion $\mathcal{D}_s\subset\mathcal{D}_{s+1}$ holds for any $s\ge 1$ since $0\in\mathcal{D}$.

The situation in which $\text{card}(\mathcal{D})=\lvert\det B\rvert\in\mathbb{Z}$, where $\text{card}(\mathcal{D})$ denotes the cardinality of $\mathcal{D}$,
has been studied extensively. In this case, 
$K(B,\mathcal{D})\subset\mathbb{R}^n$ is called a \emph{self-affine tile} if it has positive Lebesgue measure. 
 Lagarias and Wang proved the following result.
\begin{theorem}[\cite{LWG}]\label{t-2-1}
Suppose that $\text{card}(\mathcal{D})=\lvert\det B\rvert=m\in\mathbb{Z}$. Then the set
 $K(B,\mathcal{D})$ has positive Lebesgue measure if and only if for each $k\ge 1$, all $m^k$ expansions in $\mathcal{D}_k$ 
are distinct, and $\mathcal{D}_{\infty}$ is a uniformly discrete set, i.e. there exists $\delta>0$ such that 
$\lVert x-y\rVert>\delta$ for any $x\ne y\in \mathcal{D}_{\infty}$. 
\end{theorem}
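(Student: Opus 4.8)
The plan is to prove the two implications separately; necessity is a short measure count, while sufficiency contains the real work. For necessity, I would iterate the set equation to get $B^sK=\bigcup_{a\in\mathcal D_s}(K+a)$ for every $s\ge 1$ (the union over the \emph{set} $\mathcal D_s$), and then pass to Lebesgue measure:
\[ m^s|K| \;=\; |B^sK| \;=\; \Bigl|\bigcup_{a\in\mathcal D_s}(K+a)\Bigr| \;\le\; \mathrm{card}(\mathcal D_s)\,|K| \;\le\; m^s|K| .\]
If $|K|>0$, all inequalities become equalities, which forces $\mathrm{card}(\mathcal D_s)=m^s$ (so all $m^s$ expansions in $\mathcal D_s$ are distinct) and, the union being finite, forces the translates $\{K+a\}_{a\in\mathcal D_s}$ to overlap in Lebesgue-null sets only. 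Since $\mathcal D_s\subset\mathcal D_{s+1}$, any two distinct points of $\mathcal D_\infty$ lie in a common $\mathcal D_s$, so $\{K+a\}_{a\in\mathcal D_\infty}$ also overlaps only in null sets. To conclude uniform discreteness I would argue by contradiction: if $a_j\ne a_j'$ in $\mathcal D_\infty$ satisfy $t_j:=a_j-a_j'\to 0$, then $|K\cap(K+t_j)|=0$ for all $j$, contradicting the continuity at $t=0$ of $t\mapsto|K\cap(K+t)|=(\mathbf 1_K*\mathbf 1_{-K})(t)$, whose value at $t=0$ is $|K|>0$.

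For sufficiency, assume distinct expansions at every level and $\mathcal D_\infty$ $\delta$-separated. I would fix a norm on $\mathbb R^n$ for which every $f_d$ is a strict contraction and a closed ball $Q$ in that norm with $K\subset Q$ and $f_d(Q)\subset Q$ for all $d\in\mathcal D$; then $T_s:=\bigcup_{a\in\mathcal D_s}B^{-s}(Q+a)$ satisfies $T_{s+1}\subset T_s$ and $\bigcap_s T_s=K$, by the standard convergence of the iterates of the IFS. Next I would introduce the nonnegative densities $g_s:=|Q|^{-1}\sum_{a\in\mathcal D_s}\mathbf 1_{B^{-s}(Q+a)}$, each supported in $T_s$ and with $\int g_s=1$ — here distinctness enters, since it gives $\mathrm{card}(\mathcal D_s)=m^s$ while $|B^{-s}(Q+a)|=m^{-s}|Q|$. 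The key point is a uniform $L^2$ bound: expanding the square,
\[ \int g_s^2 \;=\; \frac{m^{-s}}{|Q|^2}\sum_{a,a'\in\mathcal D_s}\bigl|(Q+a)\cap(Q+a')\bigr| ,\]
and $(Q+a)\cap(Q+a')\ne\emptyset$ only when $a-a'\in Q-Q$, a fixed bounded set; the $\delta$-separation of $\mathcal D_\infty\supset\mathcal D_s$ then bounds, for each $a$, the number of admissible $a'$ by a constant $N=N(\delta,\mathrm{diam}\,Q,n)$ independent of $s$, whence $\int g_s^2\le N/|Q|$ for all $s$. By weak sequential compactness in $L^2(\mathbb R^n)$, a subsequence of $(g_s)$ converges weakly to some $g\ge 0$; testing against indicators of sets of the form $Q\setminus T_{s_0}$ shows $\mathrm{supp}\,g\subset\bigcap_s T_s=K$, while testing against $\mathbf 1_Q$ gives $\int g=1$. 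Hence $\int_K g\,dx=1$, which is impossible if $|K|=0$, and therefore $|K|>0$.

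The hard part will be the uniform overlap count in the $L^2$ estimate: one must convert the qualitative statement that $\mathcal D_\infty$ is uniformly discrete into a bound on the number of mutually overlapping translates $Q+a$, $a\in\mathcal D_s$, that does not deteriorate as $s\to\infty$. It is worth noting that the two hypotheses play complementary and essential roles here — distinctness is exactly what keeps $\int g_s=1$, and $\delta$-separation is exactly what bounds $\int g_s^2$ — and, as the necessity argument shows, each is in fact indispensable for $|K|>0$.
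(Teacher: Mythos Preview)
This theorem is not proved in the paper; it is stated with a citation to Lagarias and Wang \cite{LWG} and then used as a black box (notably in the proof of Theorem~\ref{t-2-4}). There is therefore no in-paper argument to compare yours against.

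That said, your argument is correct. The necessity direction is the standard measure count, and the passage from essential disjointness of $\{K+a\}_{a\in\mathcal D_\infty}$ to uniform discreteness via continuity of $t\mapsto(\mathbf 1_K*\mathbf 1_{-K})(t)$ at $t=0$ is the right mechanism. For sufficiency, the $L^2$-compactness approach works: the key overlap estimate --- that for each $a\in\mathcal D_s$ the number of $a'\in\mathcal D_s$ with $(Q+a)\cap(Q+a')\ne\emptyset$ is bounded independently of $s$, because such $a'$ lie in the bounded set $a+(Q-Q)$ and $\mathcal D_\infty$ is $\delta$-separated --- yields $\sup_s\|g_s\|_2<\infty$, and weak $L^2$ limits behave well against the bounded indicators $\mathbf 1_Q$ and $\mathbf 1_{Q\setminus T_{s_0}}$ that you test with. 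One point worth writing out explicitly is why $T_{s+1}\subset T_s$: with $F(A)=\bigcup_{d\in\mathcal D}f_d(A)$ the Hutchinson operator, one has $T_s=F^s(Q)$, and the nesting follows because $F$ is monotone under inclusion and $F(Q)\subset Q$.
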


Many aspects of the theory of self-affine tiles have been investigated thoroughly.
Among them, let us mention  the structure and tiling properties, the connection to wavelet theory, 
the fractal structure of the boundaries and the classification of tile digit sets 
(see e.g. \cite{LWG, LWN, GHS, LWA, GM, LW, HL, LLR2012, LLR2013}). However, there is a basic question remaining unsolved. It is the following:

{\bf Question.} What is $\lvert K(B,\mathcal{D})\rvert$ with $\text{card}(\mathcal{D})=\lvert\det B\rvert$ and 
$\mathcal{D}\subset\mathbb{R}^n$, where $\lvert K\rvert$ denotes the Lebesgue measure of a measurable set $K$?

Regarding this question, it is well-known \cite{LWN} that $\lvert K(B,\mathcal{D})\rvert$ is a positive integer when $B\in M_n(\mathbb{Z})$ 
is an expanding matrix and $\mathcal{D}\subset\mathbb{Z}^n$ is a complete set of coset representatives for $\mathbb{Z}^n/B\mathbb{Z}^n$. 
Gabardo and Yu \cite{GY} provided an algorithm to evaluate the Lebesgue measure of such self-affine tiles. 
One of our goal here is to relate, for more general $\mathcal{D}\subset\mathbb{R}^n$, 
the number $\lvert K(B,\mathcal{D})\rvert$ to the upper Beurling density of an associated measure $\mu$, which is defined by
\begin{eqnarray}\label{e-1-2}
\mu=\lim\limits_{s\to\infty}\sum\limits_{\ell_0,\dotsc,\ell_{s-1}\in\mathcal{D}}\delta_{\ell_0+B\ell_1+\dotsb+B^{s-1}\ell_{s-1}},
\end{eqnarray}
where $\delta_x$ denotes the Dirac measure at $x$.
In particular, we will prove the following result.
\begin{theorem}\label{Lebesgue}
Let $B\in M_n(\mathbb{R})$ be an expanding matrix with $\lvert\det B\rvert\in\mathbb{Z}$ and let $\mathcal{D}\subset\mathbb{R}^n$
 be a finite set with $\text{card}(\mathcal{D})=\lvert\det B\rvert$. Then, 
$\lvert K(B,\mathcal{D})\rvert^{-1}$ is equals to the upper Beurling density of $\mu$, where $\mu$ is defined by (\ref{e-1-2}).
\end{theorem}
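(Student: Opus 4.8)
The plan is to reinterpret the upper Beurling density of $\mu$ as a combined packing/covering density of the translates of $K$ by the points of $\mathcal{D}_\infty$. I would first dispose of the degenerate case $\lvert K\rvert=0$: by Theorem~\ref{t-2-1} this forces either a repeated expansion in some $\mathcal{D}_k$ — whence the multiplicities in the partial sums $\mu_s:=\sum_{\ell_0,\dots,\ell_{s-1}\in\mathcal{D}}\delta_{\ell_0+B\ell_1+\dots+B^{s-1}\ell_{s-1}}$ grow without bound on a fixed compact set and $\mu$ is not locally finite — or a failure of uniform discreteness of $\mathcal{D}_\infty$; in both situations the upper Beurling density is $+\infty$, matching $\lvert K\rvert^{-1}$. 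So I assume from now on that $\lvert K\rvert>0$, i.e.\ that $K$ is a self-affine tile, which in particular then has nonempty interior.

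Next I identify $\mu$. By Theorem~\ref{t-2-1}, all $\lvert\det B\rvert^k$ expansions in each $\mathcal{D}_k$ are distinct; since $\mathcal{D}_k\subset\mathcal{D}_{k+1}$ and $0\in\mathcal{D}$, the unique $\mathcal{D}_s$-expansion ($s>k$) of a point of $\mathcal{D}_k$ is the one obtained by appending zeros, so every $x\in\mathcal{D}_\infty$ is attained with multiplicity exactly one in $\mu_s$ for all large $s$. Hence $\mu=\sum_{x\in\mathcal{D}_\infty}\delta_x$ is the counting measure of the uniformly discrete set $\mathcal{D}_\infty$, and its upper Beurling density equals $D^+(\mathcal{D}_\infty):=\limsup_{h\to\infty}h^{-n}\sup_{v\in\mathbb{R}^n}\text{card}\big(\mathcal{D}_\infty\cap(v+[0,h]^n)\big)$. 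The two structural facts that drive the estimate are: (a) iterating $BK=\bigcup_{d\in\mathcal{D}}(K+d)$ gives $B^sK=\bigcup_{x\in\mathcal{D}_s}(K+x)$, a union of $\lvert\det B\rvert^s$ translates of $K$; and (b) since $\lvert B^sK\rvert=\lvert\det B\rvert^s\lvert K\rvert=\sum_{x\in\mathcal{D}_s}\lvert K+x\rvert$ with each $K+x\subset B^sK$, the translates $\{K+x:x\in\mathcal{D}_s\}$ — and hence the whole family $\{K+x:x\in\mathcal{D}_\infty\}$, as any two of its members lie in a common $\mathcal{D}_s$ — are pairwise disjoint up to Lebesgue-null sets. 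Fix $c>0$ with $K-K$ contained in a cube of side $c$.

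For the upper estimate, given a cube $Q$ of side $h$, the a.e.-disjoint translates $K+x$ with $x\in\mathcal{D}_\infty\cap Q$ all lie in the concentric cube of side $h+c$, so $\text{card}(\mathcal{D}_\infty\cap Q)\,\lvert K\rvert\le(h+c)^n$; dividing by $h^n$, taking the supremum over $Q$, and letting $h\to\infty$ gives $D^+(\mathcal{D}_\infty)\le\lvert K\rvert^{-1}$. For the lower estimate, using that $K$ has nonempty interior and $\lVert B^{-s}\rVert\to 0$ (as $B$ is expanding), for every $h$ there is an $s$ with $B^sK$ containing a cube $Q$ of side $h$; the a.e.-disjoint translates $K+x$ ($x\in\mathcal{D}_s$) that meet $Q$ cover $Q$ (they cover all of $B^sK$) and are indexed by points lying in the concentric cube $Q'$ of side $h+c$, so $\text{card}(\mathcal{D}_\infty\cap Q')\ge\text{card}(\mathcal{D}_s\cap Q')\ge\lvert Q\rvert/\lvert K\rvert=h^n/\lvert K\rvert$; dividing by $(h+c)^n$ and letting $h\to\infty$ gives $D^+(\mathcal{D}_\infty)\ge\lvert K\rvert^{-1}$. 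Combining the two bounds yields $\lvert K(B,\mathcal{D})\rvert^{-1}=D^+(\mathcal{D}_\infty)=D^+(\mu)$.

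The boundary-layer factors $(h+c)^n/h^n\to 1$ and the iteration of the set-valued equation are routine. The steps I expect to require the most care are the genuine convergence of (\ref{e-1-2}) to the counting measure on $\mathcal{D}_\infty$ — the multiplicity-one property, which rests on Theorem~\ref{t-2-1} — and, above all, the degenerate case $\lvert K\rvert=0$, where one must show that the failure of uniform discreteness, or the presence of a repeated expansion, forces the upper density of $\mu$ to be genuinely infinite rather than merely large.
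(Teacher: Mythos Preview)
Your plan is correct. For the main case $\lvert K\rvert>0$ you take a genuinely more elementary route than the paper: where you run a direct packing/covering count --- bounding $\text{card}(\mathcal{D}_\infty\cap Q)$ from above by fitting the a.e.-disjoint translates $K+x$ into a cube of side $h+c$, and from below by finding a cube of side $h$ inside $B^sK$ and counting the translates needed to cover it --- the paper instead passes through Gabardo's convolution framework: the pointwise bound $\mu*\chi_K\le 1$ together with Theorem~\ref{t-2-2} gives $D^+(\mu)\le\lvert K\rvert^{-1}$, and the variational formula $D^+(\mu)=\inf_{f\ge 0,\int f=1}\lVert\mu*f\rVert_\infty$ of Theorem~\ref{t-2-3} together with the computation $\lVert\mu*f*\chi_K/\lvert K\rvert\rVert_\infty=\lvert K\rvert^{-1}$ gives the reverse. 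Your argument is self-contained and avoids importing that machinery; the paper's buys a uniform language that is reused in Theorem~\ref{t-2-6} to decide when $D^-(\mathcal{D}_\infty)=D^+(\mathcal{D}_\infty)$.

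For the degenerate case $\lvert K\rvert=0$ you have correctly isolated the two sub-cases and flagged them as the delicate part; the paper's explicit arguments are worth knowing. If some $a\in\mathcal{D}_k$ has two expansions, then $\sum_{j=0}^{M-1}B^{kj}a\in\mathcal{D}_{Mk}$ has at least $2^M$, so $\mu$ has unbounded point masses. If instead $\mathcal{D}_\infty$ is not uniformly discrete, the paper inductively picks pairs $x_M\ne y_M\in\mathcal{D}_{k_M}$ with $\lVert x_M-y_M\rVert<2^{-M}\lVert B\rVert^{-S_M}$ (where $S_M=\sum_{\ell<M}k_\ell$) and shows that the $2^M$ sums $z_1+B^{k_1}z_2+\cdots+B^{S_M}z_M$, $z_i\in\{x_i,y_i\}$, all lie within distance $1$ of a single point, forcing $\mu(I_2(w_M))\ge 2^M$. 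Note that for an arbitrary discrete set failure of uniform discreteness does not by itself imply infinite upper density; it is the multiplicative structure of $\mathcal{D}_\infty$ that makes this construction go through, so this step really does need an argument of this shape.
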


For the case $\text{card}(\mathcal{D})>\lvert\det B\rvert$, the situation becomes more complicated because 
the sets $K+d$, $d\in\mathcal{D}$, might overlap. 
He, Lau and Rao (\cite{HLR}) considered the problem as to whether or not $\lvert K(B,\mathcal{D})\rvert$ is positive for this case. 

It is easy to see that $\lvert K(B,\mathcal{D})\rvert=0$ if $\text{card}(\mathcal{D})<\lvert\det B\rvert$. 
However, if we replace the ordinary Euclidean dimension $n$ with the Hausdorff dimension, then 
some self-affine sets with zero Lebesgue measure may have positive Hausdorff measure associated with their associated Hausdorff dimension. 
For example, all self-similar sets satisfying the open set condition have positive Hausdorff measure.

$K(B,\mathcal{D})$ is called a \emph{self-similar set} if the matrix $B=\rho R$, where $\rho>1$ and $R$ is an orthogonal matrix. In this case, the matrix $B$ is called a \emph{similarity matrix} with scaling factor $\rho>1$. 
We say that the IFS $\{f_d\}_{d\in\mathcal{D}}$ satisfies the \emph{open set condition} (OSC) if there exists a non-empty bounded open set $V$ such that 
\begin{eqnarray*}
\bigcup\limits_{d\in\mathcal{D}} f_d(V)\subset V \ {\rm{and}} \ f_d(V)\bigcap f_{d^{\prime}}(V)=\emptyset \ {\rm{for}} \ d\ne d^{\prime}\in\mathcal{D}.
\end{eqnarray*}

In the following, we denote the Hausdorff dimension of a measurable set $K\subset\mathbb{R}^n$ as $\dim_H K$ and the 
Hausdorff measure associated with its Hausdorff dimension $s:=\dim_H K$ as $H^{s}(K)$. The problem of  computing the 
Hausdorff dimension or the Hausdorff measure of a self-affine set has intrigued many researchers for a long time. 
A well-known result on this topic was given in \cite{F}.
\begin{theorem}[\cite{F}]\label{t-3-1}
Let $B$ be a similarity matrix with scaling factor $\rho>1$. Suppose that the IFS $\{f_d(x)\}_{d\in\mathcal{D}}$ satisfies the OSC. 
Then the Hausdorff dimension of $K:=K(B,\mathcal{D})$ is $s:=\dim_H K=\log_{\rho}^{\text{card}(\mathcal{D})}$ and $0<\mathcal{H}^s(K)<\infty$. 
\end{theorem}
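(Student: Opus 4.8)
The plan is to establish the two assertions separately: first $\dim_H K \le s$ together with $\mathcal H^s(K)<\infty$, which uses only that $B$ is a similarity, and then the lower bounds $\dim_H K \ge s$ and $\mathcal H^s(K)>0$, where the OSC is essential. Write $N:=\mathrm{card}(\mathcal D)$, so that $s=\log_\rho N$ is characterized by $N\rho^{-s}=1$, and for a word $\mathbf i=(d_1,\dots,d_k)$ with letters in $\mathcal D$ put $f_{\mathbf i}:=f_{d_1}\circ\cdots\circ f_{d_k}$, a similarity of ratio $\rho^{-k}$ since $B=\rho R$ with $R$ orthogonal. Iterating $K=\bigcup_{d\in\mathcal D}f_d(K)$ gives $K=\bigcup_{\mathbf i}f_{\mathbf i}(K)$, the union over all $N^k$ length-$k$ words, so $\{f_{\mathbf i}(K)\}$ covers $K$ by sets of diameter $\rho^{-k}\mathrm{diam}(K)$. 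With $\delta_k:=\rho^{-k}\mathrm{diam}(K)\to 0$, the size-$\delta_k$ Hausdorff pre-measure satisfies
\[
\mathcal H^s_{\delta_k}(K)\le\sum_{\mathbf i}\bigl(\mathrm{diam}\,f_{\mathbf i}(K)\bigr)^s=N^k\rho^{-ks}\,(\mathrm{diam}\,K)^s=(\mathrm{diam}\,K)^s,
\]
using $N\rho^{-s}=1$; letting $k\to\infty$ yields $\mathcal H^s(K)\le(\mathrm{diam}\,K)^s<\infty$ and hence $\dim_H K\le s$.

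For the lower bound I would bring in the natural self-similar measure $\mu_K$ on $K$: let $\pi\colon\mathcal D^{\mathbb N}\to K$ be the coding map, $\pi(\omega)=\lim_m f_{d_1}\circ\cdots\circ f_{d_m}(0)$ for $\omega=(d_1,d_2,\dots)$, and let $\nu$ be the Bernoulli measure on $\mathcal D^{\mathbb N}$ giving each symbol mass $N^{-1}$; set $\mu_K:=\pi_*\nu$, a Borel probability measure supported on $K$. The cylinder $[\mathbf i]$ over a length-$k$ word has $\nu([\mathbf i])=N^{-k}$ and $\pi([\mathbf i])=f_{\mathbf i}(K)$. Given $x$ and $0<r<1$, choose $k=k(r)$ with $\rho^{-k}\le r<\rho^{-(k-1)}$, and let $\mathcal J:=\{\mathbf i\in\mathcal D^k:f_{\mathbf i}(K)\cap B(x,r)\ne\emptyset\}$. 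Since any $\omega\in\pi^{-1}(B(x,r))$ has its length-$k$ prefix in $\mathcal J$, we get $\pi^{-1}(B(x,r))\subseteq\bigcup_{\mathbf i\in\mathcal J}[\mathbf i]$ and therefore
\[
\mu_K\bigl(B(x,r)\bigr)=\nu\bigl(\pi^{-1}(B(x,r))\bigr)\le\sum_{\mathbf i\in\mathcal J}\nu([\mathbf i])=\#\mathcal J\cdot N^{-k}.
\]
Because $N^{-k}=\rho^{-ks}=(\rho^{-k})^{s}\le r^{s}$, everything reduces to bounding $\#\mathcal J$ by a constant independent of $x,r,k$.

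This last step is where the OSC enters and is the main obstacle. Fix a bounded open set $V$ witnessing the OSC and let $a>0$ be such that $V$ contains a ball of radius $a$. From $f_d(V)\subseteq V$ and the disjointness of the $f_d(V)$ one gets that the sets $\{f_{\mathbf i}(V)\}_{\mathbf i\in\mathcal D^k}$ are pairwise disjoint, each containing a ball of radius $a\rho^{-k}\ge(a/\rho)\,r$. Since $V$ and $K$ are bounded, $f_{\mathbf i}(V)$ and $f_{\mathbf i}(K)$ lie within distance $O(\rho^{-k})=O(r)$ of one another, so for $\mathbf i\in\mathcal J$ the set $f_{\mathbf i}(V)$ is contained in a fixed ball $B(x,C_1 r)$ with $C_1$ depending only on $V$, $K$ and $\rho$. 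A Euclidean volume count on the disjoint inner balls then forces $\#\mathcal J\le(C_1\rho/a)^n=:c$. Hence $\mu_K(B(x,r))\le c\,r^s$ for all small $r$, and the mass distribution principle gives $\mathcal H^s(K)\ge\mu_K(K)/c=1/c>0$ and $\dim_H K\ge s$. Combined with the first part, $\dim_H K=s$ and $0<\mathcal H^s(K)<\infty$. The only genuinely delicate point is the uniform packing estimate $\#\mathcal J\le c$; the reduction preceding it and the minor observation that $f_{\mathbf i}(V)$ tracks $f_{\mathbf i}(K)$ up to scale $\rho^{-k}$ are routine.
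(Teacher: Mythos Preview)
Your argument is correct and is precisely the classical proof of this result (originally due to Moran and Hutchinson, and presented in Falconer's book, which is the reference \cite{F} the paper cites). Note, however, that the paper itself does \emph{not} supply a proof of this theorem: it is stated as a quoted result from \cite{F} and used as background, so there is no ``paper's own proof'' to compare against.

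One small point worth tightening: when you say that $f_{\mathbf i}(V)$ and $f_{\mathbf i}(K)$ ``lie within distance $O(\rho^{-k})$ of one another,'' the clean justification is that the OSC forces $K\subseteq\overline V$ (iterate $\bigcup_d f_d(\overline V)\subseteq\overline V$ and intersect over levels), so in fact $f_{\mathbf i}(K)\subseteq\overline{f_{\mathbf i}(V)}$; then $f_{\mathbf i}(K)\cap B(x,r)\ne\emptyset$ immediately gives $f_{\mathbf i}(V)\subseteq B(x,r+\rho^{-k}\mathrm{diam}\,V)\subseteq B(x,C_1 r)$ with $C_1=1+\mathrm{diam}\,V$. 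With that in place the volume count and the mass distribution principle go through exactly as you wrote.
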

The number $s=\log_{\rho}^{\text{card}(\mathcal{D})}$ in Theorem \ref{t-3-1}
is called the \emph{similarity dimension} of the self-similar set $K(B,\mathcal{D})$. 

Even if we assume that a self-similar set $K:=K(B,\mathcal{D})$ satisfies the OSC, it might still be difficult to compute $\mathcal{H}^s(K)$
exactly. In \cite{AS}, 
Ayer and Strichartz provided an algorithm to compute the Hausdorff measure of a 
class of linear Cantor sets in dimension one.
 However, no similar result exists in higher dimension, even for self-similar sets. Some estimates on the Hausdorff measure 
of Sierpinski carpet and Sierpinski gasket, which are a special class of self-similar sets, can also be found 
in \cite{Xiong2005, J2007, JZZ2002, ZF2000, ZL2000}. 
 We will provide here an analogue of Theorem \ref{Lebesgue} in the form of a relation between
the $s$-Hausdorff measure of a self-similar set and the quantity $\mathcal{E}_s^+(\mu)$, which is the upper $s$-density of the measure $\mu$ defined
in (\ref{e-1-2}) (see Definition \ref{d-3-1}), where $s$ is the similarity dimension of $K$.
\begin{theorem}\label{hausdorff}
Let $B$ be a similarity matrix with scaling factor $\rho>1$ and let $\mathcal{D}\subset\mathbb{R}^n$ be a finite set with 
$\text{card}(\mathcal{D})\le\lvert\det B\rvert$. Suppose that $K$ is the self-similar set determined by the pair $(B,\mathcal{D})$
and $s$ is the associated similarity dimension.
 Then, $\mathcal{H}^s(K)=(\mathcal{E}_s^+(\mu))^{-1}$.
\end{theorem}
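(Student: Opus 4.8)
The plan is to mimic the strategy suggested for Theorem \ref{Lebesgue}, passing through a fixed-scale ``renormalization'' identity that relates the Hausdorff measure of $K$ to the local mass distribution of $\mu$. First I would make the definition of $\mathcal{E}_s^+(\mu)$ explicit (this is Definition \ref{d-3-1}, which I am allowed to invoke): roughly, $\mathcal{E}_s^+(\mu) = \limsup_{r\to\infty} \sup_{x} \mu(B(x,r))/r^s$, the upper $s$-density of the discrete measure $\mu$. The key structural fact is that $\mu$ is (up to the diffuse limit being interpreted correctly) the counting measure on $\mathcal{D}_\infty$, and that $\mathcal{D}_\infty = \bigcup_s \mathcal{D}_s$ with $\mathcal{D}_{s+1} = \mathcal{D}_s + B^s\mathcal{D}$ and, crucially, $B^{-s}\mathcal{D}_s \to K$ in the Hausdorff metric while $B^{-s}K \subset \bigcup_{d\in\mathcal{D}_s} B^{-s}(K+d)$. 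So, after applying $B^{-s}$, the point set $B^{-s}\mathcal{D}_s$ becomes an $(\mathrm{card}\,\mathcal{D})^s \approx \rho^{s\cdot\dim}$-point approximation of $K$ packed into a bounded region, and the local density of $\mu$ at scale $\rho^s$ corresponds exactly to the local covering number of $K$ at the reciprocal scale.

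Concretely, the main steps I would carry out are: (1) Show $\mu(B(x, \rho^s)) $ is comparable, uniformly in $x$ and $s$, to the number of points of $\mathcal{D}_\infty$ in a ball of radius $\rho^s$, and reduce this to counting points of $\mathcal{D}_s$ (tail terms $\mathcal{D}_{s+k}\setminus\mathcal{D}_s$ contribute points spaced at scale $\gtrsim \rho^s$ so contribute $O(1)$ per unit volume — this uses that $B$ is a similarity and the OSC-type separation implicit when $\mathrm{card}(\mathcal{D})\le|\det B|$). (2) Use the bijection $\mathcal{D}_s \leftrightarrow \mathcal{D}^s$ (distinctness of expansions, which follows since $\mathrm{card}(\mathcal{D})\le|\det B|$ forces the contractions to be non-overlapping in measure/Hausdorff sense à la the OSC) to identify $\#(\mathcal{D}_s\cap B(x,\rho^s))$ with $(\mathrm{card}\,\mathcal{D})^s$ times the ``frequency'' with which a radius-$\rho^{s}$-normalized window meets $K$. (3) Rescale by $B^{-s}$: $\mathcal{H}^s$ scales by $\rho^{-s\cdot s}=( \mathrm{card}\,\mathcal{D})^{-s}$ (since $s=\dim_H K = \log_\rho \mathrm{card}\,\mathcal{D}$ gives $\rho^{s\cdot s} = (\mathrm{card}\,\mathcal{D})^s$ — I will keep the two meanings of $s$ straight by writing $\alpha$ for the exponent if needed), so that a radius-$\rho^s$ ball around a point of $\mathcal{D}_s$ becomes a radius-$1$ ball around a point approximating $K$. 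Taking $\limsup$ over $s$ and $\sup$ over $x$ of $\mu(B(x,\rho^s))/\rho^{s\cdot s}$ then yields, on one side $\mathcal{E}_s^+(\mu)$, and on the other side the reciprocal of the Hausdorff measure via the standard density characterization $\mathcal{H}^s(K)^{-1} \sim \limsup_{r\to 0}\sup_{x\in K} \mathcal{H}^s(K\cap B(x,r))/r^s$-type comparison, combined with $0<\mathcal{H}^s(K)<\infty$ from Theorem \ref{t-3-1}.

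The technical heart, and the step I expect to be the main obstacle, is step (1)--(2): controlling the discrepancy between counting $\mathcal{D}_s$-points and the ``true'' $\mathcal{H}^s$-mass of the corresponding piece of $K$, uniformly over the center $x$. Two difficulties compound here. The first is the overlap issue when $\mathrm{card}(\mathcal{D}) < |\det B|$ is allowed without assuming OSC explicitly: I would need to show that $\mathcal{H}^s(K)>0$ still forces asymptotic non-overlapping of the pieces $B^{-s}(K+\ell)$, $\ell\in\mathcal{D}_s$, in the sense that $\sum_{\ell\in\mathcal{D}_s}\mathcal{H}^s(B^{-s}(K+\ell)) = \mathrm{card}(\mathcal{D})^s\,\rho^{-s\cdot s}\,\mathcal{H}^s(K) = \mathcal{H}^s(K)$ is matched by $\mathcal{H}^s(\bigcup_\ell \cdots) $ up to $o(1)$; a sub-additivity/super-additivity squeeze using the exact self-similar scaling of $\mathcal{H}^s$ under $B$ should do this, but it requires care. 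The second is the passage from balls to the correct ``window'' shape and handling the limit defining $\mu$ rigorously (the limit in \eqref{e-1-2} is a limit of growing finite sums, so $\mu$ is really $\#|_{\mathcal{D}_\infty}$; I would verify this and that $\mathcal{D}_\infty$ is locally finite, which for a similarity $B$ and $\mathrm{card}(\mathcal{D})\le|\det B|$ with positive $\mathcal{H}^s(K)$ follows from the separation of the $\mathcal{D}_s$). Once these uniform counting estimates are in place, the $\limsup$-extraction and the rescaling are routine.
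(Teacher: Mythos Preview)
There is a genuine gap. You repeatedly assume that the hypothesis $\text{card}(\mathcal{D})\le\lvert\det B\rvert$ by itself forces distinctness of expansions and the OSC (``the OSC-type separation implicit when $\mathrm{card}(\mathcal{D})\le|\det B|$'', ``distinctness of expansions, which follows since $\mathrm{card}(\mathcal{D})\le|\det B|$''). This is false: for instance with $n=1$, $B=4$, $\mathcal{D}=\{0,1,4,5\}$ one has $\text{card}(\mathcal{D})=\lvert\det B\rvert$ yet $1+4\cdot 1=5+4\cdot 0$, so the expansions in $\mathcal{D}_2$ collide, Theorem~\ref{t-1-1} shows the OSC fails, and $\mathcal{H}^s(K)=0$. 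In that regime $\mu$ is \emph{not} the counting measure on $\mathcal{D}_\infty$ (it has unbounded atoms), $\mathcal{D}_\infty$ need not be locally finite, and the theorem asserts $\mathcal{E}_s^+(\mu)=\infty$. Your outline never addresses this case; the paper handles it separately via Schief's equivalence $\mathcal{H}^s(K)>0\Leftrightarrow$ OSC and the equivalences of Theorem~\ref{t-2-5} to force $D^+(\mu)=\infty$, then a direct comparison $D^+(\mu)\le n^{s/2}\,\mathcal{E}_s^+(\mu)$.

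Even in the OSC case your route differs from the paper's and, as you anticipate, runs into avoidable error terms. The paper does not count points of $\mathcal{D}_s$ in balls at all. Instead it introduces the self-similar probability measure $\sigma$ on $K$ and proves the exact identity $m^N\sigma(B^{-N}W)=(\mu_N*\sigma)(W)$ (Lemma~\ref{l-3-2}); combined with the invariance $\mathcal{E}_s^+(\mu*\sigma)=\mathcal{E}_s^+(\mu)$ (Lemma~\ref{l-3-1}) this converts the small-scale density of $\sigma$ directly into $\mathcal{E}_s^+(\mu)$ with no discrepancy to control. The small-scale side is then identified with $\mathcal{H}^s(K)^{-1}$ via the upper \emph{convex} density theorem (Theorem~\ref{t-3-2}). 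Note also that Definition~\ref{d-3-1} takes the supremum over compact convex sets $U$, not balls; the spherical density of an $s$-set is only known to lie in $[2^{-s},1]$, so your ball-based formulation would not give the exact constant. If you want to salvage the counting approach, you would still need to smooth $\mu$ by $\sigma$ (or something equivalent) to get equalities rather than two-sided estimates, at which point you have essentially reproduced the paper's argument.
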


We should remark that the upper $s$-density of $\mu$, $\mathcal{E}_s^+(\mu)$, is not easy to compute in general. 
As an application of Theorem \ref{hausdorff}, we will show how
 to compute the Hausdorff measure of a class of Cantor sets at the end of this paper.

The paper is organized as follows. In Section 2, we collect some known results on  Beurling densities and Hausdorff measures
that we will use in Section 3 and in Section 4.
 In Section 3, we provide some applications of the notion of Beurling density to the geometric structure of self-affine tiles. 
In particular, we consider the problem of computing the Lebesgue measure of the self-affine set 
$K(B,\mathcal{D})$ when $\text{card}(\mathcal{D})=\lvert\det B\rvert$ and 
relate it to the upper Beurling density of $\mu$, where $\mu$ is defined by (\ref{e-1-2}). In Section 4, we consider the case where 
$\text{card}(\mathcal{D})<\lvert\det B\rvert$ and $B$ is a similarity with scaling factor $\rho>1$.
We develop there the main tools to prove Theorem \ref{hausdorff}.
 Finally, using this last result, we compute the Hausdorff measure of a class of Cantor sets in Section 5.

\section{Preliminaries}

In this section, we introduce the notion of upper-Beurling (resp.~lower-Beurling) density of a positive measure
and recall the definition of Hausdorff measures. 
We collect some known results on the properties of Beurling densities that we will 
use in Section 3 and others concerning the OSC and  Hausdorff measures.

Let $\mu$ be a positive Borel measure in $\mathbb{R}^n$. The \emph{upper Beurling density}, $D^+(\mu)$, and 
the \emph{lower Beurling density}, $D^-(\mu)$, of $\mu$ are defined respectively by 
\begin{eqnarray*}
D^+(\mu)=\limsup\limits_{N\rightarrow\infty}\sup\limits_{z\in\mathbb{R}^n}\frac{\mu(I_N(z))}{N^n}, \ 
D^-(\mu)=\liminf\limits_{N\rightarrow\infty}\inf\limits_{z\in\mathbb{R}^n}\frac{\mu(I_N(z))}{N^n},
\end{eqnarray*}
where $I_N(z)=\Big\{y=(y_1,\dotsc,y_n)\in\mathbb{R}^n, \lvert y_i-z_i\rvert\le \frac{N}{2},i=1,\dotsc,n\Big\}$. 
If $D^+(\mu)=D^-(\mu)$, we say that the Beurling density of the measure $\mu$ exists and we denote it by $D(\mu)$.

If $\Lambda\subset\mathbb{R}^n$ is a discrete subset, we define $D^+(\Lambda):=D^+(\mu)$ and $D^-(\Lambda):=D^-(\mu)$
where $\mu=\sum\limits_{\lambda\in \Lambda}\delta_{\lambda}$. The quantities 
$D^+(\Lambda)$ and $D^-(\Lambda)$ are called the \emph{upper and the lower Beurling density} of $\Lambda$,
 respectively. More explicitely, we have
\begin{eqnarray*}
D^+(\Lambda)=\limsup\limits_{N\rightarrow\infty}\sup\limits_{z\in\mathbb{R}^n}\frac{\text{card}(\Lambda\bigcap I_N(z))}{N^n}, \ 
D^-(\Lambda):=\liminf\limits_{N\rightarrow\infty}\inf\limits_{z\in\mathbb{R}^n}\frac{\text{card}(\Lambda\bigcap I_N(z))}{N^n}.
\end{eqnarray*}
If $D^+(\Lambda)=D^-(\Lambda)$, then we say that $\Lambda$ has uniform Beurling density and we denote this density by $D(\Lambda)$ (see \cite{CKS}). 

Gabardo \cite{G} established a connection between certain convolution inequalities for positive Borel measures in $\mathbb{R}^n$ and the corresponding notions of  Beurling density associated with such measures. Let $f\in L^1(\mathbb{R}^n)$ with $f\ge 0$ and let $\mu$ be a positive Borel measure on $\mathbb{R}^n$ which is finite on compact sets. The convolution $f*\mu$ is defined by
\begin{eqnarray*}
\int_{\mathbb{R}^n}\,\varphi(t)\,d(f*\mu)(t)=\int_{\mathbb{R}^n}\int_{\mathbb{R}^n}\,\varphi(x+y)\,f(y) \, dy\, d\mu(x),
\end{eqnarray*}
where $\varphi\in C_c^+(\mathbb{R}^n)$ (the space of non-negative continuous functions with compact support on $\mathbb{R}^n$). In the following, 
we will list some of the results from  \cite{G} for later use.
Recall that a positive Borel measure $\mu$ on $\mathbb{R}^n$ is called \emph{translation-bounded} if, 
for every compact set $K\subset\mathbb{R}^n$, there exists a constant $C_{\mu}(K)\ge 0$ such that $\mu(K+z)\le C_{\mu}(K)$, $z\in\mathbb{R}^n$.
\begin{lemma}[\cite{G}]\label{l-2-2}
A positive Borel measure $\mu$ on $\mathbb{R}^n$ is translation-bounded if and only if $D^+(\mu)<\infty$.
\end{lemma}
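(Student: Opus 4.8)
The plan is to prove the two implications separately, both of which reduce to comparing the $\mu$-mass of cubes of different side lengths. The key geometric fact I will use is that the cube $I_N(z)$ of side $N$ can be covered by at most $\lceil N\rceil^n\le (N+1)^n$ translates of the unit cube $I_1(\cdot)$, while conversely any compact set sits inside a sufficiently large cube. Since $\mu$ is a positive measure, subadditivity and monotonicity are all that is needed to transfer bounds between these scales.

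First I would show that translation-boundedness implies $D^+(\mu)<\infty$. Applying the definition of translation-boundedness to the compact set $K=I_1(0)$ produces a constant $C:=C_\mu(I_1(0))$ with $\mu(I_1(z))\le C$ for every $z\in\mathbb{R}^n$. Covering $I_N(z)$ by at most $(N+1)^n$ unit cubes and invoking subadditivity of $\mu$ yields $\mu(I_N(z))\le (N+1)^n\,C$ for all $z$, whence
$$\frac{\mu(I_N(z))}{N^n}\le C\,\Big(1+\tfrac1N\Big)^n.$$
Taking the supremum over $z$ and the limit superior over $N$, and using $(1+1/N)^n\to 1$, I conclude $D^+(\mu)\le C<\infty$.

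For the converse, assume $D^+(\mu)=L<\infty$. By the definition of the limit superior there is an $N_0$ such that $\sup_z \mu(I_N(z))/N^n\le L+1$ for all $N\ge N_0$; equivalently $\mu(I_N(z))\le (L+1)\,N^n$ for all such $N$ and all $z$. Given an arbitrary compact set $K$, I choose $M\ge N_0$ large enough that $K\subset I_M(0)$, so that $K+z\subset I_M(z)$ for every $z$. Monotonicity of $\mu$ then gives $\mu(K+z)\le \mu(I_M(z))\le (L+1)\,M^n=:C_\mu(K)$, a bound independent of $z$, which is precisely translation-boundedness.

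The argument is essentially elementary, so I do not expect a serious obstacle; the only points requiring care are the bookkeeping in the covering step, ensuring that the comparison factor $(N+1)^n/N^n$ indeed tends to $1$ so that no spurious constant survives in the limit, and the observation in the converse direction that $D^+(\mu)<\infty$ already forces $\mu$ to be finite on compact sets, so that the resulting constants $C_\mu(K)$ are genuinely finite.
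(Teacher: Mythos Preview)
Your argument is correct. Both implications are handled cleanly: covering $I_N(z)$ by at most $(N+1)^n$ unit cubes to pass from a uniform bound on $\mu(I_1(\cdot))$ to $D^+(\mu)\le C$, and conversely enclosing an arbitrary compact $K$ in a single large cube $I_M(0)$ with $M\ge N_0$ to extract a translation-invariant bound from the finiteness of the upper density.

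As for comparison with the paper: there is nothing to compare. The paper does not prove this lemma at all; it is quoted from \cite{G} as a known result and used as a black box. Your elementary cube-covering argument is a perfectly adequate self-contained justification, and in fact the level of detail you give (tracking the factor $(1+1/N)^n$ and noting that $D^+(\mu)<\infty$ forces local finiteness) is more than what a reader would need to reconstruct the proof on their own.
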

\begin{theorem}[\cite{G}]\label{t-2-2}
Let $f\in L^1(\mathbb{R}^n)$ with $f\ge 0$ and let $\mu$ be a positive Borel measure on $\mathbb{R}^n$. If there exists a constant $C>0$ such that $f*\mu\le C$ a.e. on $\mathbb{R}^n$, then $\int_{\mathbb{R}^n}f(x) \ dx \ D^+(\mu)\le C$. If, in addition, $\mu$ is translation-bounded and there
 exists a constant $C>0$ such that $f*\mu\ge C$ a.e. on $\mathbb{R}^n$, then $\int_{\mathbb{R}^n}f(x) \ dx \ D^-(\mu)\ge C$. 
\end{theorem}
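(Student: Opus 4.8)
The plan is to derive the identity from the self-replication structure of $K$ together with an $s$-dimensional version of the convolution–density principle behind Theorem \ref{t-2-2}. Throughout I would work under the open set condition, so that Theorem \ref{t-3-1} guarantees $0<\mathcal{H}^s(K)<\infty$; the general case should be recovered by letting the multiplicities of $\mu$ absorb the overlaps. First I would record the iterated form of the defining equation, namely $B^tK=\bigcup_{x\in\mathcal{D}_t}(K+x)$ for every $t\ge 1$, and observe that, since $0\in\mathcal{D}$, the sets $B^tK$ increase with $t$; writing $\Omega=\bigcup_{t\ge 1}B^tK$ for their union, one has $B\Omega=\Omega$. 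Because $B$ is a similarity of ratio $\rho$ and $s=\log_\rho\operatorname{card}(\mathcal{D})$, so that $\rho^s=\operatorname{card}(\mathcal{D})$, this yields $\mathcal{H}^s(B^tK)=\rho^{ts}\mathcal{H}^s(K)=\operatorname{card}(\mathcal{D})^t\,\mathcal{H}^s(K)$.

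Next I would introduce the finite, compactly supported measure $\nu=\mathcal{H}^s|_{K}$ (supported, say, in a ball of radius $R$) and compute the convolution $\nu*\mu=\sum_{x\in\mathcal{D}_\infty}m(x)\,\mathcal{H}^s|_{K+x}$, where $m(x)$ is the eventual multiplicity of $x$ in $\mathcal{D}_\infty$. Under the OSC the pieces $K+x$, $x\in\mathcal{D}_t$, are pairwise $\mathcal{H}^s$-essentially disjoint and each $x$ is represented uniquely, so passing to the limit in $B^tK=\bigcup_{x\in\mathcal{D}_t}(K+x)$ gives the clean identity $\nu*\mu=\mathcal{H}^s|_{\Omega}$. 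The key transfer step is then the $s$-dimensional analogue of Theorem \ref{t-2-2}: since $\nu$ is finite and supported in a fixed ball, comparing $\mu(I_N(z))$ with $(\nu*\mu)(I_{N\pm 2R}(z))$ and using $(N\pm 2R)^s/N^s\to 1$ should yield $\mathcal{E}_s^+(\nu*\mu)=\nu(\mathbb{R}^n)\,\mathcal{E}_s^+(\mu)=\mathcal{H}^s(K)\,\mathcal{E}_s^+(\mu)$. Combined with the convolution identity, this reduces the theorem to the single normalization statement $\mathcal{E}_s^+(\mathcal{H}^s|_{\Omega})=1$.

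To establish $\mathcal{E}_s^+(\mathcal{H}^s|_{\Omega})=1$ I would exploit the exact self-similarity $B\Omega=\Omega$: writing $N=\rho^t$ and using $\Omega\cap I_{\rho^t}(z)=B^t\big(\Omega\cap B^{-t}I_{\rho^t}(z)\big)$ together with $\mathcal{H}^s(B^t\,\cdot\,)=\rho^{ts}\,\mathcal{H}^s(\,\cdot\,)$ shows that the ratio $\mathcal{H}^s(\Omega\cap I_N(z))/N^s$ is invariant under the scaling $N\mapsto\rho N$, so the $\limsup$ is governed entirely by the value of $\mathcal{H}^s(\Omega\cap Q)$ over unit-scale windows $Q=B^{-t}I_{\rho^t}(z)$. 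For the upper bound $\mathcal{E}_s^+(\mathcal{H}^s|_{\Omega})\le 1$ I would invoke the classical density theorem for $s$-sets (the upper $s$-density of a set of finite $\mathcal{H}^s$-measure is at most $1$ almost everywhere), applied to the local structure of $\Omega$ after rescaling; for the lower bound I would use the explicit packing of a large window by the pieces $K+x$, $x\in\mathcal{D}_t$, whose number is $\operatorname{card}(\mathcal{D})^t$ and whose individual masses are $\mathcal{H}^s(K)$, to produce cubes on which the ratio approaches $1$.

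The main obstacle is precisely this last normalization $\mathcal{E}_s^+(\mathcal{H}^s|_{\Omega})=1$: matching the macroscopic cube-density of the unbounded self-similar set $\Omega$ to the constant $1$ requires controlling the boundary pieces that enter a window from larger scales, and reconciling the cube normalization $N^s$ used in $\mathcal{E}_s^+$ with the diameter-based normalization built into $\mathcal{H}^s$. I expect the precise form of Definition \ref{d-3-1} to be tailored to this point, so that the renewal relation $B\Omega=\Omega$ together with the $s$-set density bounds pins the constant exactly; the remaining steps (the convolution identity and the homogeneity $\mathcal{E}_s^+(\nu*\mu)=\mathcal{H}^s(K)\,\mathcal{E}_s^+(\mu)$) should reduce to comparatively routine cube-shift estimates.
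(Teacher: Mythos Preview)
Your proposal does not address Theorem~\ref{t-2-2} at all. Theorem~\ref{t-2-2} is a general analytic statement about an arbitrary nonnegative $f\in L^1(\mathbb{R}^n)$ and an arbitrary positive Borel measure $\mu$: it asserts that a pointwise convolution bound $f*\mu\le C$ forces $\lVert f\rVert_1\,D^+(\mu)\le C$, and similarly for lower bounds. There is no self-similar set, no IFS, no open set condition, and no Hausdorff measure in its hypotheses or conclusion. In the paper this result is quoted from \cite{G} without proof; it is a tool, not one of the paper's theorems about self-affine sets.

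What you have written is instead a sketch of a proof of Theorem~\ref{hausdorff} (equivalently Theorem~\ref{t-3-3}), the identity $\mathcal{H}^s(K)=(\mathcal{E}_s^+(\mu))^{-1}$ for self-similar sets. Even judged against that target, your route diverges from the paper's and leaves the hardest step unjustified. The paper does not pass through the set $\Omega=\bigcup_t B^tK$ or the normalization $\mathcal{E}_s^+(\mathcal{H}^s|_{\Omega})=1$; instead it uses Lemma~\ref{l-3-2} to rewrite $\sigma(B^{-N}W)=m^{-N}(\mu_N*\sigma)(W)$ for the natural self-similar probability measure $\sigma$, combines this with Lemma~\ref{l-3-1} (that convolution with a probability measure preserves $\mathcal{E}_s^+$) and with Corollary~\ref{c-3-1} and Lemma~\ref{l-3-3} (which identify the small-scale density of $\sigma$ with $(\mathcal{H}^s(K))^{-1}$). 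Your ``homogeneity'' claim $\mathcal{E}_s^+(\nu*\mu)=\nu(\mathbb{R}^n)\,\mathcal{E}_s^+(\mu)$ with $\nu=\mathcal{H}^s|_K$ is exactly Lemma~\ref{l-3-1} after normalizing $\nu$, so that part is fine; but your reduction to $\mathcal{E}_s^+(\mathcal{H}^s|_{\Omega})=1$ is where the real difficulty lies, and your argument there is incomplete. The scaling invariance $B\Omega=\Omega$ does show that the ratio is constant along the sequence $N=\rho^t$, but it does not by itself identify the constant as $1$: the boundary contributions from larger-scale pieces entering a window, and the mismatch between cube side-length and the diameter-based normalization in Definition~\ref{d-3-1}, are precisely what the paper avoids by working with $\sigma$ and the upper convex density theorem (Theorem~\ref{t-3-2}) rather than with $\mathcal{H}^s|_{\Omega}$ directly.
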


Let $\Lambda\subset\mathbb{R}^n$ be a discrete subset, a measurable set $K\subset\mathbb{R}^n$ is said to $\Lambda$-tile $\mathbb{R}^n$, if $\{K+\lambda\}_{\lambda\in\Lambda}$ is a partition of $\mathbb{R}^n$ up to zero Lebesgue measure sets, or equivalently, 
\begin{eqnarray}\label{e-2-2}
\sum\limits_{\ell\in\Lambda}\chi_{K}(x+\ell)=1 \ {\rm{for}} \ a.e. \ x\in\mathbb{R}^n.
\end{eqnarray}
The tiling property of a measurable set $K\subset\mathbb{R}^n$ gives some information on the Beurling density of $\Lambda$ as shown in Lemma \ref{l-2-1}. 
\begin{lemma}\label{l-2-1}
Let $\Lambda$ be a discrete subset of $\mathbb{R}^n$. If a measurable subset $K\subset\mathbb{R}^n$ $\Lambda$-tiles $\mathbb{R}^n$, 
then the uniform Beurling density $\mathcal{D}(\Lambda)$ of $\Lambda$ exists and $\lvert K\rvert\, \mathcal{D}(\Lambda)=1$.
\end{lemma}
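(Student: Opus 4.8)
The statement: if $K$ $\Lambda$-tiles $\mathbb{R}^n$ then $D(\Lambda)$ exists and $|K|\,D(\Lambda)=1$. The natural approach is to apply Theorem \ref{t-2-2} with a cleverly chosen $f$, exploiting that the tiling equation $\sum_{\ell\in\Lambda}\chi_K(x+\ell)=1$ is literally an identity of the form $f*\mu = 1$ when $f$ and $\mu$ are set up correctly. Indeed, let $\mu = \sum_{\lambda\in\Lambda}\delta_\lambda$, so that $D^\pm(\Lambda)=D^\pm(\mu)$ by definition, and let $f = \chi_{-K}$, i.e. $f(y)=\chi_K(-y)$. Then for $\varphi\in C_c^+(\mathbb{R}^n)$ one computes $\int \varphi\,d(f*\mu) = \sum_{\lambda}\int \varphi(x+\lambda)\chi_K(-x)\,dx = \int \varphi(t)\big(\sum_\lambda \chi_K(\lambda - t)\big)\,dt$, so $f*\mu$ is (a.e. equal to) the function $t\mapsto \sum_{\lambda\in\Lambda}\chi_K(\lambda-t)$. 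Now $\sum_\lambda \chi_K(\lambda-t) = \sum_\lambda \chi_{-K}(t-\lambda) = \sum_\lambda \chi_K(t-\lambda)$ only if $K$ is symmetric, which it need not be; the clean fix is to tile with $-K$ instead, or equivalently to note that $K$ $\Lambda$-tiles iff $-K$ $(-\Lambda)$-tiles and $D^\pm(-\Lambda)=D^\pm(\Lambda)$. Either way, after this cosmetic adjustment, $f*\mu = 1$ a.e.

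First I would record that $|K|>0$ and $|K|<\infty$: finiteness because a tiling of $\mathbb{R}^n$ by translates of $K$ forces $|K|$ to be finite (otherwise $\mu(I_N(z)) = 0$ for all $N$, impossible since the tiles cover everything — more carefully, integrate (\ref{e-2-2}) over a large cube), and positivity similarly since the union of the $K+\lambda$ has full measure. Next, with $f=\chi_{-K}$ (absorbing the sign issue as above) and $\mu=\sum_{\lambda\in\Lambda}\delta_\lambda$, the tiling identity gives $f*\mu = 1$ a.e., hence both hypotheses of Theorem \ref{t-2-2} hold with $C=1$: we get $\int f\,dx\;D^+(\mu)\le 1$, i.e. $|K|\,D^+(\Lambda)\le 1$. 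For the reverse inequality I need $\mu$ translation-bounded, which follows from Lemma \ref{l-2-2} once I know $D^+(\mu)<\infty$ — and that is immediate from $|K|\,D^+(\mu)\le 1$ together with $|K|>0$. Then the second half of Theorem \ref{t-2-2} yields $|K|\,D^-(\Lambda)\ge 1$.

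Combining, $1\le |K|\,D^-(\Lambda) \le |K|\,D^+(\Lambda)\le 1$, so all inequalities are equalities: $D^-(\Lambda)=D^+(\Lambda)=|K|^{-1}$, which is exactly the assertion that the uniform Beurling density exists and $|K|\,D(\Lambda)=1$.

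The only genuine subtlety is the bookkeeping around the reflection (making sure the convolution $f*\mu$ really equals the left side of (\ref{e-2-2}) rather than a reflected version), and confirming a priori that $0<|K|<\infty$ so that the density bounds are non-vacuous and Lemma \ref{l-2-2} applies; both are routine. Everything else is a direct substitution into the two parts of Theorem \ref{t-2-2}.
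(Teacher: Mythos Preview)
Your approach is correct and essentially identical to the paper's: set $\mu=\sum_{\lambda\in\Lambda}\delta_\lambda$, recognize the tiling identity as $\chi_K*\mu=1$ a.e., and feed this into both directions of Theorem~\ref{t-2-2} (invoking Lemma~\ref{l-2-2} for translation-boundedness) to pin $D^+(\mu)=D^-(\mu)=|K|^{-1}$. Your reflection concern is over-cautious---under the paper's convolution convention one has $(\chi_K*\mu)(x)=\sum_\lambda\chi_{K+\lambda}(x)$ directly, which is the partition form of tiling, so no sign adjustment is needed; on the other hand your explicit checks that $0<|K|<\infty$ and that $D^+(\mu)<\infty$ \emph{before} invoking the lower-bound half of Theorem~\ref{t-2-2} are more careful than the paper's terse ``Theorem~\ref{t-2-2} and Lemma~\ref{l-2-2} then imply''.
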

\begin{proof}
Let $\mu:=\sum\limits_{\ell\in\Lambda}\delta_{\ell}$, then $\mu$ defines a positive  Borel measure and we have
\begin{eqnarray}\label{e-2-5}
\sum\limits_{\ell\in\Lambda}\chi_{K+\ell}(x)=\sum\limits_{\ell\in\Lambda}\chi_K*\delta_{\ell}=\chi_K*\sum\limits_{\ell\in\Lambda}\delta_{\ell}=\chi_K*\mu.
\end{eqnarray}
 Using our assumption that $K$ $\Lambda$-tile $\mathbb{R}^n$, we obtain from (\ref{e-2-5}) that $\chi_K*\mu=1$. Theorem \ref{t-2-2} and Lemma \ref{l-2-2} 
then imply that 
$$\int_{\mathbb{R}^n}\chi_K(x) \ dx \ D^+(\mu)=\int_{\mathbb{R}^n}\chi_K(x) \ dx \ D^-(\mu)=1,$$
which yields that $\lvert K\rvert \mathcal{D}(\mu)=1$. This shows that $\mathcal{D}(\Lambda)$ exists 
and proves our claim.
\end{proof}

\begin{theorem}[\cite{G}]\label{t-2-3}
Let $\mu$ be a positive Borel measure on $\mathbb{R}^n$. Then 
$$D^+(\mu)=\inf\limits_{\substack{f\ge0\\\int f=1}}\lVert \mu*f\rVert_{\infty}.$$
\end{theorem}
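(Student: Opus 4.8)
The plan is to prove the identity by establishing the two inequalities separately, exploiting the fact that one of them is essentially already available from the quoted results. For the inequality $D^+(\mu)\le\inf_{f\ge0,\int f=1}\|\mu*f\|_\infty$, I would fix any admissible test function $f\ge0$ with $\int f=1$ and set $C=\|\mu*f\|_\infty$. If $C=\infty$ there is nothing to prove, and if $C=0$ then applying the argument below with arbitrarily small positive bounds forces $D^+(\mu)=0$; hence I may assume $0<C<\infty$. Since the convolution of a function against a measure is commutative, $f*\mu=\mu*f\le C$ a.e., and Theorem \ref{t-2-2} (with $\int f=1$) yields $D^+(\mu)\le C=\|\mu*f\|_\infty$. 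Taking the infimum over all admissible $f$ gives the first inequality.

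For the reverse inequality $\inf_{f\ge0,\int f=1}\|\mu*f\|_\infty\le D^+(\mu)$, I would exhibit explicit near-optimal test functions. If $D^+(\mu)=\infty$ the inequality is trivial, so I assume $D^+(\mu)<\infty$; by Lemma \ref{l-2-2}, $\mu$ is then translation-bounded, which ensures the quantities below are finite. The natural candidates are the normalized indicators $f_N=N^{-n}\chi_{I_N(0)}$, which are nonnegative and integrate to $1$ since $\lvert I_N(0)\rvert=N^n$. The first step is to identify $\mu*f_N$ explicitly: unwinding the paper's defining formula for the convolution shows that $f_N*\mu=\mu*f_N$ has density $t\mapsto N^{-n}\int\chi_{I_N(0)}(t-x)\,d\mu(x)$, and using the symmetry $-I_N(0)=I_N(0)$ of the cube this simplifies to $(\mu*f_N)(t)=N^{-n}\mu(I_N(t))$.

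Consequently $\|\mu*f_N\|_\infty\le\sup_{z\in\mathbb{R}^n}N^{-n}\mu(I_N(z))$, where I replace the essential supremum by the ordinary supremum (a harmless overestimate). The second step is to feed this into the definition of $D^+(\mu)$. Because $D^+(\mu)$ is precisely the $\limsup$ in $N$ of the quantity $\sup_{z}N^{-n}\mu(I_N(z))$, for every $\epsilon>0$ there is an $N_0$ such that this quantity is at most $D^+(\mu)+\epsilon$ for all $N\ge N_0$. Choosing any such $N$ gives $\inf_f\|\mu*f\|_\infty\le\|\mu*f_N\|_\infty\le D^+(\mu)+\epsilon$, and letting $\epsilon\to0$ yields the reverse inequality. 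Combining the two inequalities proves the theorem.

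As for where the care is needed: the first inequality is almost free once Theorem \ref{t-2-2} is in hand, so the real content lies in the second. The one point that must not be glossed over is the direction of the $\limsup$. It is exactly the fact that $D^+(\mu)=\limsup_{N}\sup_{z}N^{-n}\mu(I_N(z))$ controls the finite-scale densities for all sufficiently large $N$, and not merely along a subsequence, that allows a single cube $I_N(0)$ to serve as a near-optimal test function; were the density defined through a $\liminf$, or known only along a subsequence, this step would fail and one would need to average over scales. The remaining items, namely verifying the convolution density from the paper's definition and passing from the essential supremum to the ordinary supremum, are routine.
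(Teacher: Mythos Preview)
Your argument is correct. The first inequality is indeed immediate from Theorem~\ref{t-2-2}, and for the second you correctly identify $(\mu*f_N)(t)=N^{-n}\mu(I_N(t))$ and then use the elementary fact that $\limsup_N g(N)=L$ forces $g(N)\le L+\epsilon$ for all sufficiently large $N$; your caution about the direction of the $\limsup$ is exactly the right point to flag.

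However, there is nothing to compare against: in this paper Theorem~\ref{t-2-3} is quoted without proof from the reference~\cite{G}, so the paper itself supplies no argument. Your proof is a clean self-contained justification of the cited result, and its structure (one direction via the convolution inequality, the other via normalized cube indicators) is the natural one and is essentially how such identities are established in~\cite{G}.
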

Using Theorem \ref{t-2-3}, the following property of the upper Beurling density of a discrete set is easily obtained.
\begin{proposition}\label{l-2-0}
Let $\Lambda\subset\mathbb{R}^n$ be a discrete set and let $C\in M_n(\mathbb{R}^n)$ be an invertible matrix. 
Then $$\lvert\det C\rvert \,D^+(C\Lambda)=D^+(\Lambda).$$
\end{proposition}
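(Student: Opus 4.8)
The plan is to use the variational characterization of the upper Beurling density from Theorem \ref{t-2-3}, namely $D^+(\nu)=\inf\{\lVert \nu*f\rVert_\infty : f\ge 0,\ \int f=1\}$, and to transport the competitors $f$ in the infimum for $\Lambda$ to competitors for $C\Lambda$ by the change of variables $x\mapsto Cx$. First I would write $\mu=\sum_{\lambda\in\Lambda}\delta_\lambda$ and $\mu_C=\sum_{\lambda\in\Lambda}\delta_{C\lambda}$, so that $D^+(\Lambda)=D^+(\mu)$ and $D^+(C\Lambda)=D^+(\mu_C)$. The key computational observation is that if $f\ge 0$ with $\int_{\mathbb{R}^n} f=1$, then setting $g(y):=\lvert\det C\rvert^{-1} f(C^{-1}y)$ one again has $g\ge 0$ and $\int_{\mathbb{R}^n} g=1$, and moreover $(\mu_C*g)(y)=\sum_{\lambda}g(y-C\lambda)=\lvert\det C\rvert^{-1}\sum_\lambda f(C^{-1}y-\lambda)=\lvert\det C\rvert^{-1}(\mu*f)(C^{-1}y)$. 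Hence $\lVert \mu_C*g\rVert_\infty=\lvert\det C\rvert^{-1}\lVert \mu*f\rVert_\infty$.

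Taking the infimum over all such $f$ on the right and noting that $g\mapsto f$ via $f(x)=\lvert\det C\rvert\, g(Cx)$ is a bijection of the admissible class onto itself, I get
\begin{eqnarray*}
D^+(C\Lambda)=\inf_{\substack{g\ge 0\\ \int g=1}}\lVert \mu_C*g\rVert_\infty
=\lvert\det C\rvert^{-1}\inf_{\substack{f\ge 0\\ \int f=1}}\lVert \mu*f\rVert_\infty
=\lvert\det C\rvert^{-1}\,D^+(\Lambda),
\end{eqnarray*}
which is exactly the claimed identity $\lvert\det C\rvert\,D^+(C\Lambda)=D^+(\Lambda)$.

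There is essentially no serious obstacle here; the only points requiring a little care are the bijectivity of the reparametrization of the class $\{f\ge 0:\int f=1\}$ under $f\leftrightarrow g$ (which is immediate since $C$ is invertible and the Jacobian factors cancel), and the fact that the supremum norm is unaffected by the affine change of variable $y\mapsto C^{-1}y$ since this is a bijection of $\mathbb{R}^n$ onto itself and it only permutes the values of $\mu*f$. One could alternatively give a direct proof from the definition of $D^+$ by comparing the count $\mathrm{card}(C\Lambda\cap I_N(z))=\mathrm{card}(\Lambda\cap C^{-1}I_N(z))$ with counts over genuine cubes, but that route forces one to absorb the distortion of $C^{-1}I_N(z)$ by cubes of comparable size and to control the resulting error terms as $N\to\infty$; the convolution argument above avoids all of that and is the cleaner path.
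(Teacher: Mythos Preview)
Your proof is correct and follows essentially the same approach as the paper: both use the variational characterization of $D^+$ from Theorem~\ref{t-2-3} and transport admissible test functions via the linear change of variables induced by $C$. The only cosmetic difference is that the paper absorbs the factor $\lvert\det C\rvert$ into the measure by working with $\widetilde{\mu}=\lvert\det C\rvert\,\mu_C$ and shows $\widetilde{\mu}*f$ and $\mu*h$ agree up to composition with $C^{-1}$, whereas you keep $\mu_C$ and carry the factor through the sup norm; the substance is identical.
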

\begin{proof}
Define $\mu=\sum\limits_{\lambda\in\Lambda}\delta_{\lambda}$ and 
$\widetilde{\mu}=\lvert\det C\rvert\sum\limits_{\lambda\in\Lambda}\delta_{C\lambda}$. 
Then, we have
\begin{eqnarray*}
\int_{\mathbb{R}^n}f(x) \ d\widetilde{\mu}(x)=\int_{\mathbb{R}^n}f(x)\ d\mu(C^{-1}x)=\lvert\det C\rvert\int_{\mathbb{R}^n}f(Cx) \ d\mu(x).
\end{eqnarray*}
Using the definition of $\widetilde{\mu}$ and the previous equality, we obtain
\begin{eqnarray*}
\widetilde{\mu}*f=\lvert\det C\rvert \int_{\mathbb{R}^n}f(C(x-y)) \ d\mu(y)=\mu*h,
\end{eqnarray*}
where $h(x)=\lvert\det C\rvert\, f(Cx)$. In particular, if  $f$ satisfies $f\ge 0$ and $\int_{\mathbb{R}^n}f(x) \ dx=1$, then 
so does $h$ and vice versa. Thus, 
$$\inf\limits_{\substack{f\ge0\\\int f=1}}\lVert \mu*f\rVert_{\infty}
=\inf\limits_{\substack{f\ge0\\\int f=1}}\lVert \widetilde{\mu}*f\rVert_{\infty}.
$$ 
and it follows from Theorem \ref{t-2-3} that $D^+(\mu)=D^+(\widetilde{\mu})$, i.e.~ $\lvert\det C\rvert\, D^+(C\Lambda)=D^+(\Lambda)$.
\end{proof}

If $U$ is a non-empty subset of $\mathbb{R}^n$, we denote the diameter of $U$ as $\text{diam} (U)$, 
which is defined by $\text{diam} (U)=\sup\{\lVert x-y\rVert: x,y\in U\}.$
Next, let us introduce the definition of $s$-dimensional Hausdorff measure of a 
subset $E\subset\mathbb{R}^n$ that we will use in this paper. Let $E$ be a subset of $\mathbb{R}^n$ and 
let $s$ be a non-negative number. For $\delta>0$, define 
\begin{eqnarray*}
\mathcal{H}_{\delta}^s(E)=\inf\Big\{\sum\limits_{i=1}^{\infty}[\text{diam}(U_i)]^s: E\subseteq \bigcup\limits_{i=1}^{\infty}U_i, \text{diam}(U_i)<\delta\Big\}.
\end{eqnarray*}
$s$-dimensional Hausdorff measure of a subset $E\subset\mathbb{R}^n$ is defined by
\begin{eqnarray*}
\mathcal{H}^s(E)=\lim\limits_{\delta\to 0}\mathcal{H}_{\delta}^s(E)=\sup\limits_{\delta>0}\mathcal{H}_{\delta}^s(E).
\end{eqnarray*}
Under this definition, the $n$-dimensional Hausdorff measure of $\mathbb{R}^n$ is related to the usual Lebesgue measure
 if $n$ is a positive integer. Clearly, the definitions of Lebesgue measure and $\mathcal{H}^1$ on $\mathbb{R}$ coincide.
 For $n>1$, they differ only by a constant multiple, i.e. 
if $E\subset\mathbb{R}^n$, then $\lvert E\rvert=c_n\mathcal{H}^n(E)$, where $c_n=\pi^{\frac{1}{2}n}/2^n\Gamma(\frac{n}{2}+1)$ (see \cite{FA}).

It has been showed that for a self-similar set, the OSC is equivalent to $\mathcal{H}^s(K)>0$ in Euclidean space,
 where $s$ is its similarity dimension (see e.g. \cite{BHR, S, S96}). More generally, for a self-affine set, He and Lau proved the following result.
\begin{theorem}[\cite{HL}]\label{t-1-1}
The IFS $\{f_d\}_{d\in\mathcal{D}}$ satisfies the OSC if and only if $\mathcal{D}_{\infty}$ is a uniformly discrete set and
the  $(\text{card}(\mathcal{D}))^k$ expansions in $\mathcal{D}_k$ are distinct for all $k\ge 1$.
\end{theorem}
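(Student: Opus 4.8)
The plan is to first rephrase the OSC in terms of the digit set. Writing $f_w=f_{d_0}\circ\cdots\circ f_{d_{k-1}}$ for a word $w=(d_0,\dots,d_{k-1})\in\mathcal D^k$, a direct computation gives $f_w(x)=B^{-k}(x+p_w)$ with $p_w=\sum_{i=0}^{k-1}B^i d_{k-1-i}\in\mathcal D_k$, and the correspondence $w\mapsto p_w$ is a bijection onto the expansions in $\mathcal D_k$. Hence ``the $(\text{card}\,\mathcal D)^k$ expansions in $\mathcal D_k$ are distinct'' is equivalent to ``the maps $f_w$, $|w|=k$, are pairwise distinct''. Since $f_d(V)=B^{-1}(V+d)$, the OSC for $\{f_d\}$ is equivalent to the existence of a nonempty bounded open set $V$ with $V+\mathcal D\subseteq BV$ (invariance) and $\{V+d\}_{d\in\mathcal D}$ pairwise disjoint (separation). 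I would work throughout with a norm adapted to $B$, i.e.\ one for which $\|B^{-1}v\|\le\gamma\|v\|$ with $\gamma<1$, whose existence is guaranteed by \cite{LWG}; uniform discreteness is insensitive to this choice since all norms on $\mathbb R^n$ are equivalent.

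For the direction OSC $\Rightarrow$ (distinct expansions and uniform discreteness), I would first establish the standard fact that under the OSC the cylinders $\{f_w(V)\}_{|w|=k}$ are pairwise disjoint and nonempty, by induction on $k$: if $w\ne w'$ first differ at position $i$, one factors out the common injective prefix $f_{d_0}\circ\cdots\circ f_{d_{i-1}}$ and uses the level-one disjointness $f_{d_i}(V)\cap f_{d_i'}(V)=\emptyset$ together with $f_e(V)\subseteq V$. Distinctness of these nonempty disjoint sets forces $f_w\ne f_{w'}$, hence $p_w\ne p_{w'}$, which is exactly the distinct-expansions property. Applying $B^k$ to the disjoint family yields that $\{V+p\}_{p\in\mathcal D_k}$ is disjoint for every $k$; since $\mathcal D_k\uparrow\mathcal D_\infty$ and any two points of $\mathcal D_\infty$ lie in a common $\mathcal D_k$, the whole family $\{V+p\}_{p\in\mathcal D_\infty}$ is pairwise disjoint. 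Choosing an open ball $W=\{x:\|x-x_0\|<r\}\subseteq V$, the translates $W+p$ are then disjoint, so $\|p-q\|\ge 2r$ for distinct $p,q\in\mathcal D_\infty$, which is uniform discreteness.

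For the converse, the engine I would use is the behaviour of the transition maps $g_{w,w'}:=f_w^{-1}\circ f_{w'}$. For $|w|=|w'|=k$ one computes $g_{w,w'}(x)=x+(p_{w'}-p_w)$, a translation by an element of $\mathcal D_\infty-\mathcal D_\infty$; distinct expansions make this vector nonzero when $w\ne w'$, and uniform discreteness makes its norm exceed $\delta$. For $|w|=k\ne l=|w'|$ the map has linear part $B^{k-l}$ with $|k-l|\ge 1$, and since every eigenvalue of $B^{\pm m}-I$ ($m\ge 1$) is bounded away from $0$ (the eigenvalues of $B^{-m}$ have modulus at most $\gamma^m$, those of $B^m$ modulus at least $\min_i|\lambda_i|^m>1$), the linear part stays a fixed distance from the identity. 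Thus the identity is uniformly isolated among the nontrivial transition maps. I would then invoke the Bandt--Graf type criterion, in its affine form, which equates this isolation property with the OSC, to produce the required $V$; concretely $V$ is assembled as a bounded open set built from the cylinder pieces rather than as a full $\varepsilon$-neighbourhood of $K$.

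The main obstacle is precisely this converse construction of $V$. The two hypotheses control overlaps of different kinds---uniform discreteness rules out collisions between equal-length cylinders, while the spectral gap of $B$ rules them out between cylinders of different lengths---but converting these two uniform separation estimates into a single nonempty bounded open set that is simultaneously invariant ($V+\mathcal D\subseteq BV$) and separating ($\{V+d\}$ disjoint) is delicate: a full neighbourhood of $K$ is too large and destroys disjointness whenever the attractor pieces touch, whereas a set of small diameter satisfies separation trivially but cannot be invariant. Handling the genuinely non-similarity case (general expanding $B$) is the additional technical point, since closeness of affine maps must be measured through their linear parts as well as their translations, and it is here that I would expect the bulk of the work to lie.
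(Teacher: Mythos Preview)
The paper does not prove this theorem at all: it is quoted verbatim from He and Lau \cite{HL} and used as a black box (in the proof of Theorem~\ref{t-2-5}). There is therefore no ``paper's own proof'' to compare your proposal against.

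That said, a brief assessment of your sketch on its own merits. Your forward direction is correct and standard: disjointness of level-$k$ cylinders gives both the distinctness of expansions and, after scaling by $B^k$, the pairwise disjointness of $\{V+p\}_{p\in\mathcal D_\infty}$, hence uniform discreteness. For the converse, the computation that equal-length neighbour maps are translations by nonzero vectors of $\mathcal D_\infty-\mathcal D_\infty$, and that unequal-length ones have linear part $B^{k-l}$, is right, as is the observation that the eigenvalues of $B^m-I$ stay uniformly away from $0$ for $m\neq 0$. The gap is exactly where you locate it: you defer to ``the Bandt--Graf type criterion, in its affine form'', but the original Bandt--Graf result is for similarities, and its extension to general expanding affine maps is not an off-the-shelf lemma one can simply cite---establishing the affine version (via a pseudo-norm adapted to $B$ and the associated notion of a weak separation/neighbour-map semigroup) is precisely the substance of \cite{HL}. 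So your outline is a faithful roadmap to the He--Lau argument rather than an independent proof, and you have correctly identified that the construction of the open set $V$ in the non-similarity case is the real work.
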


\section{The Lebesgue measure of self-affine sets}
The Beurling density of discrete sets has been used extensively in the study of Fourier frames (see e.~g.~\cite{L1967, GR1996, C2003, S1995}). 
In this section, we will give some applications of the notion of Beurling density to the theory of self-affine sets.
\begin{theorem}\label{t-2-4}
Let $B\in M_n(\mathbb{R})$ be an expansive matrix and let $\mathcal{D}$ be a finite subset of $\mathbb{R}^n$ with $\text{card}(\mathcal{D})=\lvert\det B\rvert=m\in\mathbb{Z}$. Then, 
$\lvert K(B,\mathcal{D})\rvert=(D^+(\mu))^{-1}$, where $\mu$ is defined by (\ref{e-1-2}), 
with the convention that $\lvert K(B,\mathcal{D})\rvert=0$ if $D^+(\mu)=\infty$. 
\end{theorem}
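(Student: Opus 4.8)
The plan is to realize $\mu$ as a limit of rescaled copies of a sequence of discrete measures supported on $\mathcal{D}_s$, and to exploit the near-tiling structure of $K(B,\mathcal{D})$ under translation by $\mathcal{D}_s$ together with the scaling behavior of Beurling density from Proposition 2.11 (`l-2-0`). The starting observation is the well-known identity
\begin{eqnarray*}
B^s K = \bigcup_{\ell_0,\dots,\ell_{s-1}\in\mathcal{D}} \bigl(K + \ell_0 + B\ell_1 + \dots + B^{s-1}\ell_{s-1}\bigr) = \bigcup_{d\in\mathcal{D}_s}(K+d),
\end{eqnarray*}
which follows by iterating the set equation $BK=\bigcup_{d\in\mathcal{D}}(K+d)$. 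Taking Lebesgue measure gives $|\det B|^s\,|K| = |B^sK| \le \sum_{d\in\mathcal{D}_s}|K+d| = m^s\,|K|$ with equality precisely when the translates $K+d$, $d\in\mathcal{D}_s$, overlap only on sets of measure zero; note $\operatorname{card}(\mathcal{D}_s)\le m^s$ always, and in general the union $B^sK$ is covered by $K+d$ so that $\sum_{d\in\mathcal{D}_s}\chi_{K+d}(x)\ge \chi_{B^sK}(x)$ pointwise a.e.

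First I would treat the case $|K|>0$. In that case Theorem 2.8 (`t-2-1`) guarantees that all $m^k$ expansions in $\mathcal{D}_k$ are distinct and $\mathcal{D}_\infty$ is uniformly discrete; one then shows that $K$ actually $\mathcal{D}_\infty$-tiles a set that grows to fill $\mathbb{R}^n$, or more precisely that for each $s$ the translates $\{K+d\}_{d\in\mathcal{D}_s}$ pack $B^sK$ with overlaps of measure zero (standard for self-affine tiles, cf.\ the references cited). Thus $\chi_K * \nu_s = \chi_{B^sK}$ a.e., where $\nu_s:=\sum_{d\in\mathcal{D}_s}\delta_d$; rescaling by $B^{-s}$ and using $|B^{-s}K|\to 0$ while $B^{-s}(B^sK)=K$ one obtains, via Theorem 2.9 (`t-2-2`) applied to the rescaled picture, that $|K|\,D^+(\nu_s)\cdot |\det B|^{-s}$ stabilizes. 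Passing $s\to\infty$, since $\mu$ is (up to the identification in (1.2)) the increasing limit of the $\nu_s$ after the bookkeeping in the definition, and invoking Proposition 2.11 to handle the $B^{-s}$ scaling, I would conclude $|K|\,D^+(\mu)=1$. Concretely: $\mu$ restricted appropriately agrees with $\nu_s$, and $D^+(\mu)=\lim_s |\det B|^{-s}D^+(\mathcal{D}_s)$ by Proposition 2.11, while the tiling gives $|K|\,D^+(\mathcal{D}_s)\,|\det B|^{-s}=1$ for each $s$.

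For the complementary case $|K|=0$, I would show $D^+(\mu)=\infty$, i.e.\ $\mu$ is not translation-bounded, using Lemma 2.3 (`l-2-2`). By Theorem 2.8, $|K|=0$ forces either a collision among the $m^k$ expansions in some $\mathcal{D}_k$, or $\mathcal{D}_\infty$ fails to be uniformly discrete. In the first situation the measure $\mu$ assigns mass $>1$ to some point (multiplicities accumulate under iteration), and by self-affine invariance this multiplicity propagates and blows up, so no uniform bound on $\mu(I_N(z))$ can hold; in the second, points of $\mathcal{D}_\infty$ cluster arbitrarily closely, again forcing $\mu(I_1(z))$ to be unbounded in $z$. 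Either way $D^+(\mu)=\infty$, matching the convention $|K(B,\mathcal{D})|=0$.

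The main obstacle I anticipate is the interchange of the limit defining $\mu$ in (1.2) with the Beurling-density computation — making precise in what sense $\mu=\lim_s\nu_s$ (the definition as written is a limit of finite sums of Dirac masses, and one must check this limit exists as a locally finite measure, which again uses Theorem 2.8 in the tiling case) and then justifying $D^+(\mu)=\lim_s|\det B|^{-s}D^+(\mathcal{D}_s)$ rather than merely $\le$ or $\ge$. The cleanest route is probably to avoid the limit entirely by proving $\chi_K*\mu = \chi_{\mathbb{R}^n}$ a.e.\ directly (in the tiling case) — since $\bigcup_s B^sK=\mathbb{R}^n$ when $B$ is expanding and $0\in K$ — and then apply Theorems 2.9 and 2.10 (`t-2-2`, `t-2-3`) together with Lemma 2.3 exactly as in the proof of Lemma 2.7 (`l-2-1`), which is the template this theorem is built to generalize.
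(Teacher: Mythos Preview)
Your treatment of the case $|K|=0$ is essentially the paper's: split via Theorem~\ref{t-2-1} into ``some $\mathcal{D}_k$ has a repeated expansion'' versus ``$\mathcal{D}_\infty$ not uniformly discrete'', and in each subcase exhibit unbounded mass of $\mu$ in bounded regions.

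The genuine gap is in the case $|K|>0$, specifically the lower bound $D^+(\mu)\ge 1/|K|$. Your first route invokes $D^+(\mathcal{D}_s)$, but $\mathcal{D}_s$ is \emph{finite}, so $D^+(\mathcal{D}_s)=0$ for every $s$ and the identity ``$|K|\,D^+(\mathcal{D}_s)\,|\det B|^{-s}=1$'' is vacuous; Proposition~\ref{l-2-0} cannot repair a $0=0$ statement. Your ``cleanest route'' asserts $\chi_K*\mu=1$ a.e., equivalently $\bigcup_{s\ge 0} B^sK=\mathbb{R}^n$, but this requires $0$ to lie in the \emph{interior} of $K$, which is not automatic: with $n=1$, $B=2$, $\mathcal{D}=\{0,1\}$ one has $K=[0,1]$, $\bigcup_s B^sK=[0,\infty)$, and $\chi_K*\mu=\chi_{[0,\infty)}$ (indeed $D^-(\mu)=0$ here, so no two-sided tiling identity can hold; this is exactly the dichotomy of Theorem~\ref{t-2-6}, which is proved \emph{after} the present result). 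Thus Lemma~\ref{l-2-1} does not apply directly.

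The paper's argument: from $\chi_K*\mu\le 1$ and Theorem~\ref{t-2-2} one gets $|K|\,D^+(\mu)\le 1$, as you anticipated. For the reverse inequality the paper uses the characterization $D^+(\mu)=\inf\{\|\mu*f\|_\infty:\ f\ge 0,\ \int f=1\}$ of Theorem~\ref{t-2-3}: given $\varepsilon>0$, choose $f$ with $\|\mu*f\|_\infty\le D^+(\mu)+\varepsilon$ and show $\|\mu*f*\chi_K\|_\infty=1$. The missing idea is that although $\bigcup_s B^sK$ need not be all of $\mathbb{R}^n$, it contains \emph{arbitrarily large balls} (because $|K|>0$ forces $K$ to have nonempty interior, and $B$ is expanding), so $\mu*\chi_K=\chi_{\bigcup_s B^sK}$ equals $1$ on arbitrarily large balls; convolving with any fixed $L^1$-normalized $f$ then drives the sup-norm up to $1$, giving $1/|K|\le D^+(\mu)+\varepsilon$.
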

\begin{proof}
We will consider the two cases, $\lvert K(B,\mathcal{D})\rvert>0$ and $\lvert K(B,\mathcal{D})\rvert=0$, separately.\\
{\bf Case I:} Assume that $\lvert K(B,\mathcal{D})\rvert>0$. Then the set $\mathcal{D}_{\infty}$ 
is uniformly discrete by Theorem \ref{t-2-1}. Hence, by Lemma \ref{l-2-2}, the measure 
$\mu:=\sum\limits_{\lambda\in\mathcal{D}_{\infty}}\delta_{\lambda}$ is translation-bounded and the
sets $K+\ell, \ell\in\mathcal{D}_{\infty}$ are essentially disjoint since $B^k K=\bigcup\limits_{\ell\in\mathcal{D}_k}K+\ell$ 
has Lebesgue measure $m^k\lvert K\rvert$.  Thus, we have 
\begin{eqnarray}\label{e-2-6}
\mu*\chi_K(x)=\sum\limits_{\lambda\in\mathcal{D}_{\infty}}\chi_K(x-\lambda)
=\sum\limits_{\lambda\in\mathcal{D}_{\infty}}\chi_{K+\lambda}(x)=\chi_{\bigcup\limits_{\ell\in\mathcal{D}_{\infty}}(K+\ell)}(x)\le 1.
\end{eqnarray}
It follows from (\ref{e-2-6}) and Theorem \ref{t-2-2} that 
$\int_{\mathbb{R}^n}\chi_K(d) \ dxD^+(\mu)\le 1$, i.e. $D^+(\mu)\le\frac{1}{\lvert K\rvert}$. 
To prove the converse inequality, use the identity 
$D^+(\mu)=\inf\limits_{\substack{f\ge0\\\int f=1}}\lVert \mu*f\rVert_{\infty}$ from Theorem \ref{t-2-3}. 
Thus for any fixed $\varepsilon>0$, there exists $f\ge 0$ with $\int f=1$ such that 
$\lVert \mu*f\rVert_{\infty}\le D^+(\mu)+\varepsilon$, which implies that $\mu*f(x)\le D^+(\mu)+\varepsilon$ a.e. $x\in\mathbb{R}^n$.
 Then, using the definition of convolution, we have
\begin{eqnarray}\label{e-2-7}
\mu*f*\frac{\chi_K}{\lvert K\rvert}\le D^+(\mu)+\varepsilon.
\end{eqnarray}
On the other hand,
\begin{eqnarray*}\label{e-2-8}
&&\mu*f*\frac{\chi_K}{\lvert K\rvert}(x)\nonumber =\mu*\frac{\chi_K}{\lvert K\rvert}*f(x)
=\frac{1}{\lvert K\rvert}\int_{\mathbb{R}^n}\,\big(\mu*\chi_K\big)(x-y)\,f(y) \ dy\nonumber\\
&=&\frac{1}{\lvert K\rvert}\left(\int_{B(0,N)}\,\big(\mu*\chi_K\big)(x-y)\,f(y) \, dy
+\int_{\mathbb{R}^n\setminus B(0,N)}\,\big(\mu*\chi_K\big)(x-y)\,f(y)\,dy\right).
\end{eqnarray*}
Since $\mu*\chi_K\le 1$, $f\ge 0$ and $\int_{\mathbb{R}^n}f(y) \ dy=1$, we have $\lVert\mu\ast f\ast\chi_K\rVert_{\infty}\le1$.
 Furthermore, given $\varepsilon>0$, using the Lebesgue dominated convergence theorem, we have
\begin{eqnarray}\label{e-2-9}
\int_{\mathbb{R}^n\setminus B(0,N)}\,\big( \mu*\chi_K\big)(x-y)\,f(y)\, dy\le
\int_{\mathbb{R}^n\setminus B(0,N)}\,f(y)\, dy <\varepsilon  \ {\rm{if}} \ N\ge N_0.
\end{eqnarray}
The fact that $\lvert K\rvert>0$ implies that $K$ has a non-empty interior (\cite{LWG}). Thus, 
for any given $N>0$, $\bigcup\limits_{m\ge 0}B^mK$ must contain a ball $B(x_N, 2N)$ for some $x_N\in\mathbb{R}^n$. It follows from (\ref{e-2-6}) that
\begin{eqnarray*}
\big(\mu*\chi_K\big)(x-y)=\chi_{\bigcup\limits_{\ell\in\mathcal{D}_{\infty}}(K+\ell)}(x-y)=\chi_{\bigcup\limits_{m\ge 0}B^mK}(x-y).
\end{eqnarray*}
Note that for $y\in B(0, N)$, $\chi_{\bigcup\limits_{m\ge 0}B^mK}(x-y)$ has the value $1$ in the ball $B(x_N,N)$.
 Therefore, for $x\in B(x_N,N)$, we have 
\begin{eqnarray}\label{e-2-10}
\int_{B(0,N)}\,\big(\mu*\chi_K\big)(x-y)\,f(y)\, dy=\int_{B(0,N)}\,f(y) \,dy\rightarrow 1 \ {\rm{as}} \ N\rightarrow\infty.
\end{eqnarray}
We deduce from (\ref{e-2-9}) and (\ref{e-2-10}) that $\lVert\mu*f*\frac{\chi_K}{\lvert K\rvert}\rVert_{\infty}=\frac{1}{\lvert K\rvert}$.
 Therefore, using (\ref{e-2-7}), $\frac{1}{\lvert K\rvert}\le D^+(\mu)+\varepsilon.$ Since $\varepsilon>0$ is arbitrary, $\frac{1}{\lvert K\rvert}\le D^+(\mu)$. This proves our claim if $\lvert K(B,\mathcal{D})\rvert>0$.\\
{\bf Case II:} Assume that $\lvert K(B,\mathcal{D})\rvert =0$. Then, by Theorem \ref{t-2-1},
 either the $m^k$ expansions in $\mathcal{D}_k$ are not distinct for some $k$ or the set $\mathcal{D}_{\infty}$ is not a uniformly discrete set. \\
Let us assume first that the $m^k$ expansions in $\mathcal{D}_k$ are not distinct for a given $k$. There exist then  $a\in\mathcal{D}_k$ 
which can be represented in two different ways in terms of the digits in $\mathcal{D}$, i.e.
\begin{eqnarray*}
a=\sum\limits_{j=0}^{k-1}B^jd_j=\sum\limits_{j=0}^{k-1}B^jd_j^{\prime}, \ d_j,d_j^{\prime}\in\mathcal{D},\,\,\text{with}\,\,
d_j\ne dj^{\prime}\,\,\text{for at least one}\,\,j.
\end{eqnarray*}
Then the element $a+B^ka\in\mathcal{D}_{2k}$ has at least four distinct representations and more generally, 
$\sum\limits_{j=0}^{M-1}B^{kj}a$ has at least $2^M$ distinct expansions in $\mathcal{D}_{Mk}$. It follows that if 
$z_M=\sum\limits_{j=0}^{M-1}B^{kj}a$, then $\mu(\{z_M\})\ge 2^M$. Hence, for any $N>0$, we have that
 $\sup\limits_{z\in\mathbb{R}^n}\frac{\mu(I_N(z))}{N^n}=\infty$ and, in particular,
\begin{eqnarray*}
D^+(\mu)=\limsup\limits_{N\rightarrow\infty}\sup\limits_{z\in\mathbb{R}^n}\frac{\mu(I_N(z))}{N^n}=\infty.
\end{eqnarray*}
Let us assume now that $\mathcal{D}_{\infty}$ is not a uniformly discrete set. Then there exists $k_1\ge 1$ and 
$x_1,y_1\in\mathcal{D}_{k_1}\subset\mathcal{D}_{\infty}$ with $x_1\ne y_1$ such that 
$\lVert x_1-y_1\rVert<\frac{1}{2}$.
Let $ F_1=\{x_1,y_1\}$ and $w_1=x_1$. Then $F_1\subset\mathcal{D}_{k_1}\subset\mathcal{D}_{\infty}$ and 
$\lVert z_1-w_1\rVert<\frac{1}{2}$ for any $z_1\in F_1$. We define $S_1=0$. More generally, if $M\ge 2$ and $k_j, S_j$ and 
$x_j,y_j\in\mathcal{D}_{k_j}$, $F_j\subset\mathcal{D}_{S_j}$ have been defined for $1\le j\le M-1$, we let 
$S_M=\sum\limits_{\ell=1}^{M-1}k_{\ell}$ and choose $k_M$ and $x_M,y_M\in\mathcal{D}_{k_M}\subset\mathcal{D}_{\infty}$ 
 with $x_M\ne y_M$ such that $\lVert x_M-y_M\rVert<\frac{1}{2^M\lVert B\rVert^{S_M}}$. We let
\begin{eqnarray*}
&& F_M=\Big\{z_1+B^{k_1}z_2+\dotsb+B^{S_M}z_M, \ z_i\in\{x_i,y_i\}, \ {\rm{for}} \ 1\le i\le M\Big\},\\
&& w_M=x_1+B^{k_1}x_2+\dotsb+B^{S_M}x_M.
\end{eqnarray*}
Then $F_M\subset\mathcal{D}_{S_M+k_M}\subset\mathcal{D}_{\infty}$, $w_M\in\mathcal{D}_{S_M+k_M}$ and for any $z\in F_M$, we have
\begin{eqnarray*}
\lVert z-w_M\rVert&=&\lVert(z_1-x_1)+B^{k_1}(z_2-x_2)+\dotsb+B^{S_M}(z_M-x_M)\rVert\\
&<&\frac{1}{2}+\lVert B\rVert^{k_1}\frac{1}{4\lVert B\rVert^{k_1}}+\dotsb+\lVert B\rVert^{S_M}\frac{1}{2^M\lVert B\rVert^{S_M}}\\
&=&(\frac{1}{2}+\frac{1}{4}+\dotsb\frac{1}{2^M})< 1.
\end{eqnarray*}
It follows that $\mu(I_2(w_M))\ge 2^M$. Therefore, for any $N\ge 2$,  we have that $\sup\limits_{z\in\mathbb{R}^n}\frac{I_N(z)}{N^n}=\infty$ and $D^+(\mu)=\infty$ as before.  
\end{proof}

Using Theorem \ref{t-2-4} together with Theorem \ref{t-1-1} and Lemma \ref{l-2-2}, we deduce the following result.
\begin{theorem}\label{t-2-5}
If the measure $\mu$ is defined as in (\ref{e-1-2}), then the following conditions are equivalent.
\begin{enumerate}[(i)]
\item The IFS $\{f_i\}_{i=1}^m$ satisfies the open set condition.
\item The $m^k$ expansions in $\mathcal{D}_k$ are distinct for all $k\ge 1$ and $\mathcal{D}_{\infty}$ is a uniformly discrete set.
\item $\lvert K(B,\mathcal{D})\rvert>0$
\item $0<D^+(\mu)<\infty$.
\item $\mu$ is translation-bounded.
\end{enumerate}
\end{theorem}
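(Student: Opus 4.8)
The plan is to establish Theorem \ref{t-2-5} as a chain of implications, drawing on results already in hand rather than constructing anything from scratch. The logical skeleton I would aim for is $(i)\Leftrightarrow(ii)$ by Theorem \ref{t-1-1}, $(ii)\Leftrightarrow(iii)$ by Theorem \ref{t-2-1}, $(iii)\Leftrightarrow(iv)$ by Theorem \ref{t-2-4}, and $(iv)\Leftrightarrow(v)$ by Lemma \ref{l-2-2}. Since each of these four facts is precisely a biconditional (or, in the case of Theorem \ref{t-2-4}, an identity $\lvert K(B,\mathcal{D})\rvert=(D^+(\mu))^{-1}$ with the stated convention that makes it into one), the proof amounts to quoting them in the right order and checking that the quantifiers match up.

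First I would note that $(i)\Leftrightarrow(ii)$ is exactly the statement of Theorem \ref{t-1-1}: the IFS $\{f_d\}_{d\in\mathcal D}$ satisfies the OSC if and only if $\mathcal D_\infty$ is uniformly discrete and the $(\mathrm{card}(\mathcal D))^k$ expansions in $\mathcal D_k$ are distinct for every $k\ge 1$; under the running hypothesis $\mathrm{card}(\mathcal D)=\lvert\det B\rvert=m$, this is condition $(ii)$ verbatim. Next, $(ii)\Leftrightarrow(iii)$ is the content of Theorem \ref{t-2-1} of Lagarias and Wang: $\lvert K(B,\mathcal D)\rvert>0$ precisely when all $m^k$ expansions in $\mathcal D_k$ are distinct and $\mathcal D_\infty$ is uniformly discrete.

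For $(iii)\Leftrightarrow(iv)$, I would invoke Theorem \ref{t-2-4}, which gives $\lvert K(B,\mathcal D)\rvert=(D^+(\mu))^{-1}$, with $\lvert K(B,\mathcal D)\rvert=0$ interpreted as $D^+(\mu)=\infty$. Reading this identity: $\lvert K(B,\mathcal D)\rvert>0$ forces $D^+(\mu)=\lvert K(B,\mathcal D)\rvert^{-1}$ to be a finite positive number, so $(iii)\Rightarrow(iv)$; conversely, if $0<D^+(\mu)<\infty$ then $D^+(\mu)\ne\infty$, so $\lvert K(B,\mathcal D)\rvert>0$, giving $(iv)\Rightarrow(iii)$. (One small point to mention: the lower bound $D^+(\mu)>0$ in $(iv)$ is automatic here, since $0\in\mathcal D$ forces $\mathcal D_\infty\ne\emptyset$ and $D^+(\mu)$ of a nonempty translation-bounded set is strictly positive — indeed $\mu\ne 0$ — so really the force of $(iv)$ is the finiteness.) Finally, $(iv)\Leftrightarrow(v)$ is immediate from Lemma \ref{l-2-2}, which says $\mu$ is translation-bounded if and only if $D^+(\mu)<\infty$; combined with the automatic positivity just noted, $\mu$ translation-bounded is equivalent to $0<D^+(\mu)<\infty$.

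I do not anticipate a genuine obstacle here, since the theorem is a repackaging of four results collected earlier; the only thing requiring care is bookkeeping around the conventions — matching $\mathrm{card}(\mathcal D)=m=\lvert\det B\rvert$ across the hypotheses of Theorems \ref{t-2-1}, \ref{t-2-4}, and \ref{t-1-1}, and being explicit that "$\lvert K\rvert=0 \Leftrightarrow D^+(\mu)=\infty$" in Theorem \ref{t-2-4} is what converts that identity into the equivalence $(iii)\Leftrightarrow(iv)$. If anything is mildly delicate, it is confirming that $D^+(\mu)>0$ always holds in this setting, so that I am entitled to phrase $(iv)$ and $(v)$ with the two-sided bound; this follows because $0\in\mathcal D$ guarantees $\mathcal D\subseteq\mathcal D_\infty$, hence $\mu$ is a nonzero positive measure, hence $D^+(\mu)>0$.
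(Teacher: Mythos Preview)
Your approach is essentially identical to the paper's: the same four cited results, in the same order, giving the same chain of equivalences. The one flaw is your justification that $D^+(\mu)>0$ is automatic. You argue that $\mu\ne 0$ forces $D^+(\mu)>0$, but this is false in general --- for instance, $D^+(\delta_0)=0$, since $\mu(I_N(z))/N^n\le 1/N^n\to 0$. The paper instead obtains $D^+(\mu)>0$ from Theorem~\ref{t-2-4}: $K$ is compact, so $\lvert K\rvert<\infty$, hence $D^+(\mu)=\lvert K\rvert^{-1}>0$ (in the case $D^+(\mu)<\infty$; otherwise it is infinite and certainly positive). With that correction, your proof is complete and matches the paper's.
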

\begin{proof}
The equivalence of (i) and (ii) follows from Theorem \ref{t-1-1}, that of (ii) and (iii) from Theorem \ref{t-2-1} and that of (iii) and (iv) from Theorem \ref{t-2-4}. The fact that $\mu$ is translation-bounded is equivalent to $D^+(\mu)<\infty$ by Lemma \ref{l-2-2}. In that case, we must have $D^+(\mu)>0$ by Theorem \ref{t-2-4} since $\lvert K(B,\mathcal{D})\rvert<\infty$. This proves the equivalence of (iv) and (v).
\end{proof}

Consider now the example where $B=2$ and $\mathcal{D}=\{0,1\}$ in dimension $1$. Then, $K(B,\mathcal{D})=[0,1]$ and $\mu=\sum\limits_{n=0}^{\infty}\delta_n$ yielding $D^+(\mu)=1$ and $D^-(\mu)=0$. On the other hand, if $B=-2$ and $\mathcal{D}=\{0,1\}$, we have $K(B,\mathcal{D})=[-\frac{2}{3},\frac{1}{3}]$ and $\mu=\sum\limits_{n=-\infty}^{\infty}\delta_n$. Thus, $D^+(\mu)=D^-(\mu)=1$ in that case. We note that $0$ belongs to the boundary of $K(B,\mathcal{D})$ in the first case and in the interior of it in the second case. This fact holds in general, i.e. the Beurling density of $\mu$ can also be used to check whether or not a self-affine tile contains a neighborhood of $0$, or equivalently, to check whether or not it is a $\mathcal{D}_{\infty}$-tiling set as shown in the proof of the following theorem.
\begin{theorem}\label{t-2-6}
Let $K=K(B,\mathcal{D})$ be a self-affine tile. Then 
\begin{enumerate}[(a)]
\item $K$ contains a neighborhood of $0$ if and only if $D^+(\mathcal{D}_{\infty})=D^-(\mathcal{D}_{\infty})=\frac{1}{\lvert K\rvert}$. 
\item $K$ does not contain a neighborhood of $0$ if and only if $D^+(\mathcal{D}_{\infty})=\frac{1}{\lvert K\rvert}$ and $D^-(\mathcal{D}_{\infty})=0$.
\end{enumerate}
\end{theorem}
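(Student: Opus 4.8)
The plan is to relate membership of a neighborhood of $0$ in $K$ to the tiling property of $\mathcal{D}_{\infty}$, and then invoke the density computations already available. First I would recall from Theorem \ref{t-2-4} (applied to the tile case, so $D^+(\mu)<\infty$) that $D^+(\mathcal{D}_\infty)=\frac{1}{\lvert K\rvert}$ always holds; thus in both (a) and (b) the statement about $D^+$ is automatic, and the real content is the dichotomy for $D^-(\mathcal{D}_\infty)$. Since these two statements are logical negations of one another once the $D^+$ part is known, it suffices to prove the single equivalence in (a), namely: $K$ contains a neighborhood of $0$ $\iff$ $D^-(\mathcal{D}_\infty)=\frac{1}{\lvert K\rvert}$; part (b) then follows because the only remaining possibility for $D^-(\mathcal{D}_\infty)$ must be shown to be $0$.

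For the forward direction of (a): if $K$ contains a ball $B(0,r)$, I would use the self-affine relation $B^k K=\bigcup_{\ell\in\mathcal{D}_k}(K+\ell)$ together with $\mathcal{D}_k\subset\mathcal{D}_\infty$ to show $\bigcup_{\ell\in\mathcal{D}_\infty}(K+\ell)$ is all of $\mathbb{R}^n$ up to measure zero: indeed $\bigcup_{m\ge0}B^m K=\bigcup_{m\ge0}\bigcup_{\ell\in\mathcal{D}_m}(K+\ell)\subseteq\bigcup_{\ell\in\mathcal{D}_\infty}(K+\ell)$, and since $B$ is expanding and $0$ is interior to $K$, the sets $B^m K$ exhaust $\mathbb{R}^n$. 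Because $K$ is a tile, the translates $K+\ell$, $\ell\in\mathcal{D}_\infty$, are essentially disjoint (as in the proof of Theorem \ref{t-2-4}, Case I), so $\chi_K*\mu=1$ a.e., i.e.\ $K$ is a $\mathcal{D}_\infty$-tiling set. Lemma \ref{l-2-1} then gives that $\mathcal{D}(\mathcal{D}_\infty)$ exists and equals $\frac{1}{\lvert K\rvert}$, so in particular $D^-(\mathcal{D}_\infty)=\frac{1}{\lvert K\rvert}$.

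For the converse direction of (a): suppose $K$ does \emph{not} contain a neighborhood of $0$; I want to conclude $D^-(\mathcal{D}_\infty)=0$, which in particular rules out $D^-=\frac{1}{\lvert K\rvert}$. Here the idea is that $0\in K$ but $0$ is a boundary point, so there is a sequence of points $p_j\to 0$ with $p_j\notin K$; pulling back by iterating $x\mapsto Bx$, one finds, for each $j$, a large ball $B(q_j,N_j)$ with $N_j\to\infty$ on which $\chi_K*\mu=\chi_{\bigcup_{m\ge0}B^mK}$ vanishes on a set of positive proportion — more precisely, one produces balls of radius growing to infinity that are disjoint from $\bigcup_{\ell\in\mathcal{D}_\infty}(K+\ell)$, forcing $\inf_z \mu(I_N(z))/N^n\to 0$. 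I would make this precise by noting that if $B(0,\varepsilon)\not\subset K$ then $B(0,\varepsilon)\setminus\bigcup_{\ell\in\mathcal{D}_\infty}(K+\ell)$ has positive measure (the union meets $B(0,\varepsilon)$ only in translates $K+\ell$ with $\ell$ near $0$, and near $0$ the set is just $K$ itself since $\mathcal{D}_\infty$ is uniformly discrete), then blow this hole up by $B^m$ to get arbitrarily large empty boxes. The main obstacle I anticipate is precisely this last step: carefully controlling the geometry so that the "hole" near $0$ genuinely produces boxes $I_N(z)$ of side $N\to\infty$ containing no point of $\mathcal{D}_\infty$ — this requires using the uniform discreteness of $\mathcal{D}_\infty$ (available since $K$ is a tile, by Theorem \ref{t-2-1}) to ensure that near $0$ the only relevant translate is $K+0=K$, and using expansiveness of $B$ quantitatively to scale the fixed-size hole up. Once an empty box of side $N$ is located for each large $N$, $D^-(\mathcal{D}_\infty)=\liminf_N \inf_z \mathrm{card}(\mathcal{D}_\infty\cap I_N(z))/N^n=0$ follows immediately, completing (a) and hence (b).
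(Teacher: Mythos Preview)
Your overall strategy coincides with the paper's: $D^+(\mathcal{D}_\infty)=1/\lvert K\rvert$ from Theorem~\ref{t-2-4}, the forward direction of (a) via the $\mathcal{D}_\infty$-tiling property and Lemma~\ref{l-2-1}, and the converse by producing arbitrarily large cubes missing $\mathcal{D}_\infty$. The gap is in how you locate the initial hole. Your claim that ``near $0$ the set is just $K$ itself since $\mathcal{D}_\infty$ is uniformly discrete'' does not follow: the translate $K+\ell$ meets $B(0,\varepsilon)$ whenever $\ell\in -K+B(0,\varepsilon)$, a region of diameter about $\text{diam}(K)$, so many $\ell\ne 0$ may be involved and those translates can cover $B(0,\varepsilon)\setminus K$ completely. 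Uniform discreteness only bounds how many such $\ell$ there are, not their covering effect; thus you have not shown $B(0,\varepsilon)\setminus(K+\mathcal{D}_\infty)\ne\emptyset$, and your blow-up has nothing to act on.

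The paper avoids this by not localizing near $0$. Since $K$ is compact and $\mathcal{D}_\infty$ uniformly discrete, $K+\mathcal{D}_\infty$ is closed, hence $(K+\mathcal{D}_\infty)^c$ is open; the paper takes it to be nonempty (since $K$ fails to $\mathcal{D}_\infty$-tile) and picks \emph{any} ball $D(a,r)$ inside it. The key observation replacing your local analysis is the global invariance $B(K+\mathcal{D}_\infty)=K+\mathcal{D}_\infty$, which immediately gives $B^mD(a,r)\subset(K+\mathcal{D}_\infty)^c$ for every $m\ge 0$. As $B$ is expanding, $B^mD(a,r)$ contains cubes $I_{N_m}(B^ma)$ with $N_m\to\infty$, each disjoint from $\mathcal{D}_\infty\subset K+\mathcal{D}_\infty$, and $D^-(\mathcal{D}_\infty)=0$ follows directly.
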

\begin{proof}
We have $\lvert K\rvert>0$ since $K=K(B,\mathcal{D})$ is a self-affine tile and the sets $K+\ell, \ \ell\in\mathcal{D}_{\infty}$ are essentially disjoint by the proof in Theorem \ref{t-2-4}.
Suppose that $K(B,\mathcal{D})$ contains a neighborhood of $0$. Then $\bigcup\limits_{k\in\mathbb{Z}}B^k K=\bigcup\limits_{\ell\in\mathcal{D}_{\infty}}K+\ell=\mathbb{R}^n$ since $B$ is expansive.
Thus, $K$ is a $\mathcal{D}_{\infty}$-tiling set. It follows from Lemma \ref{l-2-1} that $D^+(\mathcal{D}_{\infty})=D^-(\mathcal{D}_{\infty})=\frac{1}{\lvert K\rvert}$. \\
 Assume that $K$ does not contain a neighborhood of $0$. Then $(K+\mathcal{D}_{\infty})^c$ is a non-empty open set in $\mathbb{R}^n$ since $K$ is not a $\mathcal{D}_{\infty}$-tiling set and $\mathcal{D}_{\infty}$ is a discrete set. Thus we can find a ball $D(a,r)=\Big\{x\in\mathbb{R}^n,\lVert x-a\rVert<r\Big\}$ contained in $(K+\mathcal{D}_{\infty})^c$. On the other hand, since, $B(K+\mathcal{D}_{\infty})=K+\mathcal{D}_{\infty}$, we have $B^m D(a,r)\subset (K+\mathcal{D}_{\infty})^c$ for any $m\ge 0$. Since $B$ is expansive, $B^m D(a,r)$ contains a cube $I_{N_m}(B^ma)$ with $\lim\limits_{m\rightarrow\infty}N_m=\infty$. Hence, we have
\begin{eqnarray*}
D^-(\mathcal{D}_{\infty})=\liminf\limits_{N\rightarrow \infty}\inf\limits_{z\in\mathbb{R}^n}\frac{\text{card}(\mathcal{D}_{\infty}\bigcap (I_N(z))}{N^n}\le \lim
\limits_{m\rightarrow\infty}\frac{\text{card}(\mathcal{D}_{\infty}\bigcap I_{N_m}(B^ma))}{N_m^n}=0.
\end{eqnarray*}
Furthermore, by Theorem \ref{t-2-4}, we have $D^+(\mathcal{D}_{\infty})=\frac{1}{\lvert K\rvert}>0$. This proves our claim.
\end{proof}

As we mentioned before, if $\text{card}(\mathcal{D})>\lvert\det B\rvert$, then the translates  $K+d, d\in\mathcal{D}$, can overlap on a set of
positive measure if $\lvert K\rvert>0$, which makes the computation of the Lebesgue measure of $K$ more difficult.
 The analogue of Theorem \ref{t-2-4} does not hold in this situation as illustrated by the next example.
\begin{example}\label{ex-2-1}
In dimension one, consider the set $K$ associated with the dilation $\frac{3}{2}$ and the digit set $\mathcal{D}=\{0,1\}$, i.e. the set $K$ satisfies that $\frac{3}{2}K=K\bigcup (K+1)$. Then $K=[0,2]$ and $\lvert K\rvert=2$. By the definition of $\mathcal{D}_s$, we have
\begin{eqnarray*}
\mathcal{D}_s=\Big\{\sum\limits_{j=0}^{s-1}B^j\ell_j,\ell_j\in\mathcal{D}\Big\}=
\Big\{\sum\limits_{j=0}^{s-1}\,\left(3/2\right)^j\ell_j,\ell_j\in\{0,1\}\Big\}.
\end{eqnarray*}
The number of elements in $\mathcal{D}_s$ is $2^s$ and the largest element in $\mathcal{D}_s$ is $\sum\limits_{j=0}^{s-1}(\frac{3}{2})^j=2[(\frac{3}{2})^s-1]$. Then, using the definition of $D^+(\mu)$, where $\mu$ is defined by (\ref{e-1-2}), we have
\begin{eqnarray*}
D^+(\mu)\ge\lim\limits_{s\rightarrow\infty}\frac{2^s}{2[(\frac{3}{2})^s-1]}=\infty.
\end{eqnarray*}
This shows that $\lvert K\rvert\ne(D^+(\mu))^{-1}$.
\end{example}

\section{The Hausdorff measure of self-affine sets}

In this section, we will limit our discussion to self-similar sets $K:=K(B,\mathcal{D})$, i.e. 
$B$ will be assumed to be a similarity with scaling factor $\rho>1$ and $\text{card}(\mathcal{D})<\lvert\det B\rvert$. 

Our main goal in this section is to extend the results of section 3 concerning the Lebesgue measure of self-affine set. In this section, the Lebesgue measure will be replaced by the $s$-Hausdorff measure and the Beurling density by an analogous notion of ``$s$-density". 
\begin{definition}\label{d-3-1}
Let $\mu$ be a positive Borel measure on $\mathbb{R}^n$. Define the upper $s$-density of $\mu$ to be the quantity
\begin{eqnarray*}
\mathcal{E}_s^+(\mu)=\limsup\limits_{r\rightarrow\infty}\sup\limits_{\text{diam}(U)\ge r>0}\frac{\mu(U)}{[\text{diam}(U)]^s},
\end{eqnarray*}
where the supremum is over all compact convex sets $U$ with $\text{diam}(U)\ge r>0$.
\end{definition}
We also recall the definition of the convolution of two measures. Let $\mu$ be a Borel measure and let $\sigma$ be a Borel probability measure. The convolution $\mu*\sigma$ is defined by 
\begin{eqnarray*}
\int_{\mathbb{R}^n}\phi(z)d(\mu*\sigma)(z):=\int_{\mathbb{R}^n}\int_{\mathbb{R}^n}\phi(x+y)d\mu(x)d\sigma(y),
\end{eqnarray*}
for any compactly supported continuous function $\phi$ on $\mathbb{R}^n$. If $E$ is a bounded Borel set, we can define $(\mu*\sigma)(E)$ by replacing $\phi$ by $\chi_E$, the characteristic function of $E$, in the previous formula.
\begin{lemma}\label{l-3-1}
Let $\mu$ and $\sigma$ be  positive Borel measures on $\mathbb{R}^n$ with  $\sigma$ being also a  probability measure. Then 
$$\mathcal{E}_s^+(\mu*\sigma)=\mathcal{E}_s^+(\mu).$$
\end{lemma}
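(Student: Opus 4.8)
The plan is to show the two inequalities $\mathcal{E}_s^+(\mu*\sigma)\le\mathcal{E}_s^+(\mu)$ and $\mathcal{E}_s^+(\mu*\sigma)\ge\mathcal{E}_s^+(\mu)$ separately, exploiting the fact that convolving with a probability measure $\sigma$ simply ``smears'' the mass of $\mu$ by translations drawn from $\sigma$, which are supported in a fixed compact set (or at least concentrated there up to arbitrarily small total mass).

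For the inequality $\mathcal{E}_s^+(\mu*\sigma)\le\mathcal{E}_s^+(\mu)$, I would first reduce to the case where $\sigma$ has compact support: given $\varepsilon>0$, pick $R>0$ so that $\sigma(\mathbb{R}^n\setminus I_R(0))<\varepsilon$, write $\sigma=\sigma_1+\sigma_2$ with $\sigma_1$ supported in $I_R(0)$ and $\|\sigma_2\|<\varepsilon$, and handle the $\sigma_2$ part crudely using that $\mathcal{E}_s^+(\mu*\sigma_2)\le\|\sigma_2\|\,\mathcal{E}_s^+(\mu)$ (since $(\mu*\sigma_2)(U)=\int\mu(U-y)\,d\sigma_2(y)\le\|\sigma_2\|\sup_y\mu(U-y)$ and each translate $U-y$ is convex with the same diameter as $U$). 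For the main part, if $U$ is a compact convex set with $\operatorname{diam}(U)\ge r$ and $\sigma$ is supported in $I_R(0)$, then
\begin{eqnarray*}
(\mu*\sigma)(U)=\int_{I_R(0)}\mu(U-y)\,d\sigma(y)\le\sup_{y\in I_R(0)}\mu(U-y),
\end{eqnarray*}
and each $U-y$ is a compact convex set of diameter exactly $\operatorname{diam}(U)$. Hence
\begin{eqnarray*}
\frac{(\mu*\sigma)(U)}{[\operatorname{diam}(U)]^s}\le\sup_{y\in I_R(0)}\frac{\mu(U-y)}{[\operatorname{diam}(U-y)]^s},
\end{eqnarray*}
and taking $\sup$ over $U$ with $\operatorname{diam}(U)\ge r$ and then $\limsup_{r\to\infty}$ gives $\mathcal{E}_s^+(\mu*\sigma_1)\le\mathcal{E}_s^+(\mu)$. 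Combining with the estimate for $\sigma_2$ and letting $\varepsilon\to0$ yields the desired bound. (One must also check that $\mathcal{E}_s^+$ is subadditic in the measure, which is immediate from $(\mu_1+\mu_2)(U)=\mu_1(U)+\mu_2(U)$.)

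For the reverse inequality $\mathcal{E}_s^+(\mu*\sigma)\ge\mathcal{E}_s^+(\mu)$, the idea is that enlarging a convex set $U$ slightly recovers at least as much mass after convolution. Given a compact convex $U$ with $\operatorname{diam}(U)\ge r$, and $\sigma$ supported (up to mass $\varepsilon$) in $I_R(0)$, let $U'=U+I_R(0)$, which is compact, convex, and has $\operatorname{diam}(U')\le\operatorname{diam}(U)+\operatorname{diam}(I_R(0))$. For $x\in U$ and $y\in I_R(0)$ we have $x+y\in U'$, so $(\mu*\sigma_1)(U')\ge\int_{I_R(0)}\mu(U'-y)\,d\sigma_1(y)\ge\mu(U)\,\sigma_1(I_R(0))\ge(1-\varepsilon)\mu(U)$. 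Therefore
\begin{eqnarray*}
\frac{(\mu*\sigma)(U')}{[\operatorname{diam}(U')]^s}\ge\frac{(1-\varepsilon)\,\mu(U)}{[\operatorname{diam}(U)+\operatorname{diam}(I_R(0))]^s}.
\end{eqnarray*}
As $\operatorname{diam}(U)\to\infty$ the ratio $\operatorname{diam}(U')/\operatorname{diam}(U)\to1$, so taking $\sup$ over $U$ and $\limsup$ over $r$ gives $\mathcal{E}_s^+(\mu*\sigma)\ge(1-\varepsilon)\mathcal{E}_s^+(\mu)$; letting $\varepsilon\to0$ finishes the argument. The main obstacle I anticipate is the bookkeeping around $\sigma$ not having compact support — in particular making sure the tail mass $\sigma_2$ genuinely contributes negligibly and that $\mu*\sigma$ is well-defined and translation-bounded-like on convex sets (which follows if $\mathcal{E}_s^+(\mu)<\infty$; the case $\mathcal{E}_s^+(\mu)=\infty$ should be treated by noting both sides are then infinite, using the reverse inequality argument with $\varepsilon$ fixed). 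Everything else is routine estimation using invariance of diameter under translation and the near-invariance of diameter under a bounded Minkowski perturbation.
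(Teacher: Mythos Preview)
Your proposal is correct and follows essentially the same strategy as the paper's proof. Both arguments bound $(\mu*\sigma)(U)=\int\mu(U-y)\,d\sigma(y)$ using that each translate $U-y$ is convex with the same diameter (for the inequality $\le$), and both enlarge $U$ by a bounded Minkowski sum so that $U\subseteq U'-y$ for all $y$ in a large ball, with $\operatorname{diam}(U')/\operatorname{diam}(U)\to 1$ (for the inequality $\ge$).

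One small remark: your reduction to compactly supported $\sigma$ in the $\le$ direction is unnecessary. The paper simply uses
\[
(\mu*\sigma)(U)=\int_{\mathbb{R}^n}\mu(U-y)\,d\sigma(y)\le\sup_{y\in\mathbb{R}^n}\mu(U-y),
\]
which holds for any probability measure $\sigma$ without truncation; the bound $\mathcal{E}_s^+(\mu*\sigma)\le\mathcal{E}_s^+(\mu)$ follows immediately. Your splitting $\sigma=\sigma_1+\sigma_2$ works but adds bookkeeping that is not needed here. For the $\ge$ direction the paper also truncates, integrating only over a ball $D(0,R)$ and letting $R\to\infty$ at the end---this is equivalent to your splitting argument, so there the two presentations match closely.
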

\begin{proof}
By the definition of $\mathcal{E}_s^+(\mu)$, we get
\begin{eqnarray}\label{e-3-3}
\mathcal{E}_s^+(\mu*\sigma)&=&\limsup\limits_{r\rightarrow\infty}\sup_{\text{diam}(U)\ge r>0}\frac{\mu*\sigma(U)}{[\text{diam}(U)]^s}\nonumber\\
&=&\limsup\limits_{r\rightarrow\infty}\sup_{\text{diam}(U)\ge r>0}\frac{\int_{\mathbb{R}^n}\int_{\mathbb{R}^n}\chi_{U}(x+y)d\mu(x)d\sigma(y)}{[\text{diam}(U)]^s}\nonumber\\
&=& \limsup\limits_{r\rightarrow\infty}\sup_{\text{diam}(U)\ge r>0}\frac{\int_{\mathbb{R}^n}\mu(U-y)d\sigma(y)}{[\text{diam}(U)]^s},
\end{eqnarray}
where the supremum is over all convex sets $U$ with $\text{diam}(U)\ge r>0$. Since $\sigma$ is a Borel probability measure, using (\ref{e-3-3}), we have
\begin{eqnarray*}
\limsup\limits_{r\rightarrow\infty}\sup_{\text{diam}(U)\ge r>0}\frac{\int_{\mathbb{R}^n}\mu(U-y)d\sigma(y)}{[\text{diam}(U)]^s}&\le& \limsup\limits_{r\rightarrow\infty}\sup_{\text{diam}(U)\ge r>0}\sup\limits_{y\in\mathbb{R}^n}\frac{\mu(U-y)}{[\text{diam}(U)]^s}\\
&=&\limsup\limits_{r\rightarrow\infty}\sup_{\text{diam}(U)\ge r>0}\frac{\mu(U)}{[\text{diam}(U)]^s},
\end{eqnarray*}
which implies that $\mathcal{E}_s^+(\mu*\sigma)\le \mathcal{E}_s^+(\mu)$. For the converse inequality, we can assume that $\mathcal{E}_s^+(\mu*\sigma)<\infty.$  Let $V$ be the convex hull of the sets $U$ and $U+y, \ y\in D(0,R)$ for some fixed $R>0$. Then $U\subseteq V\bigcap (V-y)$ and $ \text{diam}(V)\le \text{diam} (U)+R$. Furthermore, we have
\begin{eqnarray}\label{e-3-4}
\frac{\mu(U)}{[\text{diam} (U)]^s}\le\frac{\mu(V-y)}{[\text{diam} (U)]^s}.
\end{eqnarray}
It follows from (\ref{e-3-4}) that, for fixed $R>0$,
\begin{eqnarray*}
\frac{\int_{D(0,R)}\mu(U)d\sigma(y)}{[\text{diam} (U)]^s}
&\le&\frac{\int_{D(0,R)}\mu(V-y)d\sigma(y)}{[\text{diam} (V)]^s}\cdot\frac{[\text{diam} (V)]^s}{[\text{diam} (U)]^s}\\
&\le&\frac{\int_{D(0,R)}\mu(V-y)d\sigma(y)}{[\text{diam} (V)]^s}\cdot\frac{(\text{diam} (U)+R)^s}{[\text{diam} (U)]^s}.
\end{eqnarray*}
Thus we have
\begin{align*}
&\limsup\limits_{r\rightarrow\infty}\sup\limits_{\text{diam} (U)\ge r>0}\frac{\int_{D(0,R)}\mu(U)d\sigma(y)}{[\text{diam} (U)]^s}\\
\le&
\limsup\limits_{r\rightarrow\infty}\sup\limits_{\text{diam} (V)\ge r>0}\frac{\int_{D(0,R)}\mu(V-y)d\sigma(y)}{[\text{diam} (V)]^s}\\
\le&
\limsup\limits_{r\rightarrow\infty}\sup\limits_{\text{diam} (V)\ge r>0}\frac{\int_{\mathbb{R}^n}\mu(V-y)d\sigma(y)}{[\text{diam} (V)]^s}.
\end{align*}
Letting $R\rightarrow\infty$, we obtain that $\mathcal{E}_s^+(\mu)\le \mathcal{E}_s^+(\mu*\sigma)$, which yields the converse inequality.
\end{proof}

It is well-known (\cite{H}) that the IFS $\{f_d\}_{d\in\mathcal{D}}$ determines a unique Borel probability measure $\sigma$ supported on the set $K(B,\mathcal{D})$ satisfying 
\begin{eqnarray}\label{e-3-2}
\int f \ d\sigma=\frac{1}{\text{card}(\mathcal{D})}\sum\limits_{d\in\mathcal{D}}\int f\circ f_d \ d\sigma,
\end{eqnarray}
for all compactly supported continuous function $f$ on $\mathbb{R}^n$.   

\begin{lemma}\label{l-3-2}
Let $m=\text{card}(\mathcal{D})$ and let $\sigma$ be the Borel probability measure supported on $K(B,\mathcal{D})$ which satisfies (\ref{e-3-2}).
 Define $\mu_N=\sum\limits_{d_0,\dotsc,d_{N-1}\in\mathcal{D}}\delta_{d_0+Bd_1+\dotsb+B^{N-1}d_{N-1}}$. Then,
 for any Borel measurable set $W\subset\mathbb{R}^n$, we have $\sigma(B^{-N}W)=\frac{1}{m^N}\,\left(\mu_N*\sigma\right)(W)$.
\end{lemma}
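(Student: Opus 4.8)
The statement to prove is Lemma \ref{l-3-2}, namely that for the invariant probability measure $\sigma$ and $\mu_N=\sum_{d_0,\dots,d_{N-1}\in\mathcal{D}}\delta_{d_0+Bd_1+\dots+B^{N-1}d_{N-1}}$, one has $\sigma(B^{-N}W)=\frac{1}{m^N}(\mu_N*\sigma)(W)$ for every Borel set $W$. The natural approach is to iterate the defining identity (\ref{e-3-2}) exactly $N$ times and then translate the resulting formula into the language of convolution. I would first unwind (\ref{e-3-2}) once: taking $f=\chi_W$ (approximated by continuous functions, or directly since both sides extend to bounded Borel functions), we get $\sigma(W)=\frac{1}{m}\sum_{d\in\mathcal{D}}\sigma(f_d^{-1}(W))=\frac{1}{m}\sum_{d\in\mathcal{D}}\sigma(BW-d)$, using $f_d(x)=B^{-1}(x+d)$ so that $f_d^{-1}(W)=BW-d$.

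The key step is the induction. I claim that for every $N\ge 1$,
\begin{eqnarray*}
\sigma(W)=\frac{1}{m^N}\sum_{d_0,\dots,d_{N-1}\in\mathcal{D}}\sigma\bigl(B^NW-(d_0+Bd_1+\dots+B^{N-1}d_{N-1})\bigr).
\end{eqnarray*}
The base case $N=1$ is the computation above. For the inductive step, apply (\ref{e-3-2}) once more to each term $\sigma(B^NW-\sum_{j=0}^{N-1}B^jd_j)$, writing it as $\frac{1}{m}\sum_{d_N\in\mathcal{D}}\sigma\bigl(B(B^NW-\sum_{j=0}^{N-1}B^jd_j)-d_N\bigr)=\frac{1}{m}\sum_{d_N\in\mathcal{D}}\sigma\bigl(B^{N+1}W-\sum_{j=0}^{N}B^jd_j\bigr)$; collecting the sums over $d_0,\dots,d_N$ and dividing by $m^{N+1}$ gives the formula for $N+1$.

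Finally I would replace $W$ by $B^{-N}W$ in the displayed identity, so that $B^NW$ becomes $W$, obtaining
\begin{eqnarray*}
\sigma(B^{-N}W)=\frac{1}{m^N}\sum_{d_0,\dots,d_{N-1}\in\mathcal{D}}\sigma\bigl(W-(d_0+Bd_1+\dots+B^{N-1}d_{N-1})\bigr),
\end{eqnarray*}
and then recognize the right-hand sum as $\frac{1}{m^N}(\mu_N*\sigma)(W)$: indeed, by the definition of the convolution of a measure with a probability measure (here $\mu_N*\sigma$, with $\mu_N$ a finite sum of Diracs so the roles are symmetric), $(\mu_N*\sigma)(W)=\int\int\chi_W(x+y)\,d\mu_N(x)\,d\sigma(y)=\sum_{d_0,\dots,d_{N-1}}\sigma(W-(d_0+\dots+B^{N-1}d_{N-1}))$, since $\mu_N(\{d_0+\dots+B^{N-1}d_{N-1}\})$ counts multiplicity exactly as the sum over tuples does.

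The only genuinely delicate point is the passage from (\ref{e-3-2}), which is stated for compactly supported continuous $f$, to indicator functions of Borel sets: one should note that (\ref{e-3-2}) extends by a standard monotone-class / density argument to all bounded Borel $f$ (both sides are finite Borel measures in disguise applied to $f$, and they agree on $C_c$, hence everywhere), and that no issue of multiplicity arises because distinctness of the expansions is never needed — $\mu_N$ is defined as a sum with multiplicity, matching the sum over tuples term by term. Everything else is bookkeeping with the affine maps $f_d$ and the identity $f_d^{-1}(W)=BW-d$.
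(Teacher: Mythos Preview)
Your proof is correct and follows essentially the same approach as the paper: both iterate the self-similarity identity (\ref{e-3-2}) $N$ times and then identify the resulting sum as the convolution $\mu_N*\sigma$. The paper applies the iteration directly to $\chi_{B^{-N}W}$, whereas you first prove the identity $\sigma(W)=m^{-N}\sum\sigma(B^NW-\ell)$ and then substitute $W\mapsto B^{-N}W$; these are the same computation in a different order. Your remark on extending (\ref{e-3-2}) from $C_c$ to bounded Borel functions via a monotone-class argument is a point the paper leaves implicit, and your observation that $\mu_N$ is a sum with multiplicity (so distinctness of expansions is irrelevant) is well taken. One cosmetic note: in the inductive step you write the result as $\sigma(B^{N+1}W-\sum_{j=0}^N B^j d_j)$, but what the computation literally gives is $\sigma\bigl(B^{N+1}W-(d_N+Bd_0+\dots+B^Nd_{N-1})\bigr)$; a relabeling of the summation indices is needed, which you should make explicit.
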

\begin{proof}
For any Borel measurable set $W\subset\mathbb{R}^n$, we deduce from the identity (\ref{e-3-2}) that
\begin{eqnarray*}
\sigma(B^{-N}W)
&=&\int_{\mathbb{R}^n}\chi_{B^{-N}W}(x) \ d\sigma(x)\\
&=&\frac{1}{m^N}\sum\limits_{d_1,d_2,\dotsc,d_N\in\mathcal{D}}\int_{\mathbb{R}^n}\chi_{B^{-N}W}(B^{-N}x+B^{-1}d_1+\dotsm+B^{-N}d_N) \ d\sigma(x)\\
&=&\frac{1}{m^N}\sum\limits_{d_1,d_2,\dotsc,d_N\in\mathcal{D}}\int_{\mathbb{R}^n}\chi_{W}(x+B^{N-1}d_1+\dotsb+d_N) \ d\sigma(x)\\
&=&\frac{1}{m^{N}}\int_{\mathbb{R}^n}\chi_W(x) \ d(\sigma*\mu_N)(x)\\
&=&\frac{1}{m^N}\sigma*\mu_N(W).
\end{eqnarray*}
\end{proof}

A subset $E\subset\mathbb{R}^n$ is called an $s$-set ($0\le s\le n$) if 
$E$ is $\mathcal{H}^s$-measurable and $0<\mathcal{H}^s(E)<\infty$. 
The upper convex density of an $s$-set $E$ at $x$ \cite{FA} is defined as
\begin{eqnarray*}
D_c^s(E,x)=\overline{\lim\limits_{r\to 0}}\sup_{0<\text{diam} (U)\le r}\frac{\mathcal{H}^s(E\bigcap U)}{[\text{diam} (U)]^s},
\end{eqnarray*}
where the supremum is over all convex sets $U$ with $x\in U$ and $0<\text{diam} (U)\le r$.
 Note that the upper convex density of an $s$-set $E$ at $x$ can also be defined by 
 $$D_c^s(E,x)=\lim\limits_{r\to 0}\sup_{0<\text{diam} (U)\le r}\frac{\mathcal{H}^s(E\bigcap U)}{[\text{diam} (U)]^s},$$ 
since $\sup\limits_{0<\text{diam} (U)\le r}\frac{\mathcal{H}^s(E\bigcap U)}{[\text{diam} (U)]^s}$ is
 decreasing with respect to $r$. The following theorem will be useful later on in this section.
\begin{theorem}[\cite{FA}]\label{t-3-2}
If $E$ is an $s$-set in $\mathbb{R}^n$, then $D_c^s(E,x)=1$ at $\mathcal{H}^s$-almost all $x\in E$ and $D_c^s(E,x)=0$ at $\mathcal{H}^s$-almost all $x\in E^c$.
\end{theorem}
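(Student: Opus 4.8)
The plan is to prove both halves of the statement by working with the finite Borel measure $\nu:=\mathcal{H}^s|_E$, which is finite and (being a finite Borel measure on $\mathbb{R}^n$) outer regular precisely because $E$ is an $s$-set, so that $0<\mathcal{H}^s(E)<\infty$. The point is that $D_c^s(E,x)$ is exactly the upper convex density of $\nu$ at $x$, namely $\lim_{r\to 0}\sup\{\nu(U)/[\mathrm{diam}(U)]^s : x\in U \text{ convex}, 0<\mathrm{diam}(U)\le r\}$. The two main tools will be the $\mathcal{H}^s$-version of the Vitali covering theorem — in which a Vitali class may consist of arbitrary closed convex sets and the conclusion is the dichotomy that a disjoint subfamily $\{U_i\}$ either has $\sum_i[\mathrm{diam}(U_i)]^s=\infty$ or satisfies $\mathcal{H}^s(E\setminus\bigcup_i U_i)=0$ — together with the outer regularity of $\nu$. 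Using the general-set form of Vitali is essential, since arbitrarily eccentric convex sets do not obey the classical ball-based Vitali property.

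First I would treat the upper bound $D_c^s(E,x)\le 1$ for $\mathcal{H}^s$-a.e.\ $x\in E$. Fix $\lambda>1$ and set $F_\lambda=\{x\in E:D_c^s(E,x)>\lambda\}$. Each $x\in F_\lambda$ lies in arbitrarily small convex sets $U$ with $\nu(U)>\lambda[\mathrm{diam}(U)]^s$; confining these to a fixed open $G\supseteq F_\lambda$ gives a Vitali class. Extracting a disjoint subfamily $\{U_i\}$, the estimate $\lambda\sum_i[\mathrm{diam}(U_i)]^s<\sum_i\nu(U_i)\le\nu(G)<\infty$ shows we are in the almost-covering case, whence $\mathcal{H}^s(F_\lambda)\le\sum_i[\mathrm{diam}(U_i)]^s<\lambda^{-1}\nu(G)$. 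Choosing $G$ by outer regularity so that $\nu(G)\le\nu(F_\lambda)+\varepsilon=\mathcal{H}^s(F_\lambda)+\varepsilon$ and letting $\varepsilon\to0$ forces $(1-\lambda^{-1})\mathcal{H}^s(F_\lambda)\le 0$, i.e.\ $\mathcal{H}^s(F_\lambda)=0$. The complementary statement that $D_c^s(E,x)=0$ for $\mathcal{H}^s$-a.e.\ $x\in E^c$ is proved by the same mechanism applied to $B_m=\{x\in E^c:D_c^s(E,x)>1/m\}$: the resulting disjoint convex sets satisfy $\tfrac1m\sum_i[\mathrm{diam}(U_i)]^s<\nu(G)$ for any open $G\supseteq B_m$, and since $B_m\subseteq E^c$ gives $\nu(B_m)=0$, outer regularity lets $\nu(G)\to 0$, yielding $\mathcal{H}^s(B_m)=0$ and hence $\mathcal{H}^s(\bigcup_m B_m)=0$.

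The subtler half is the lower bound $D_c^s(E,x)\ge 1$ for $\mathcal{H}^s$-a.e.\ $x\in E$, and this is the step I expect to be the main obstacle. Fix $\lambda<1$, put $G_\lambda=\{x\in E:D_c^s(E,x)<\lambda\}$, and decompose it into countably many pieces $A$, each contained in $\{x:\text{every convex } U\ni x \text{ with } \mathrm{diam}(U)\le 1/k \text{ has } \nu(U)<\lambda[\mathrm{diam}(U)]^s\}$ and of diameter $<1/(2k)$. For such a piece I would take an almost-optimal cover $\{V_j\}$ of $A$ with $\mathrm{diam}(V_j)<1/(2k)$ and $\sum_j[\mathrm{diam}(V_j)]^s<\mathcal{H}^s(A)+\eta$, then replace each $V_j$ by its convex hull. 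The crucial point is that passing to the convex hull leaves the diameter unchanged and keeps $\{V_j\}$ a cover, so the uniform local bound applies to each $V_j$ (each meets $A$, hence contains an admissible base point): this yields $\mathcal{H}^s(A)=\nu(A)\le\sum_j\nu(V_j)<\lambda\sum_j[\mathrm{diam}(V_j)]^s<\lambda(\mathcal{H}^s(A)+\eta)$, and letting $\eta\to 0$ gives $\mathcal{H}^s(A)\le\lambda\,\mathcal{H}^s(A)$, so $\mathcal{H}^s(A)=0$. Summing over the pieces and over a sequence $\lambda\uparrow 1$ completes the lower bound. The convex-hull device is exactly what forces the density constant to be $1$ rather than the $2^{-s}$ appearing for spherical densities, and getting the quantifiers right — first localizing to a scale $1/k$ on which the density bound is uniform, then covering at an even finer scale so that every covering set is admissible — is the delicate part of the argument.
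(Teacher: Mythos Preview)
The paper does not supply its own proof of this theorem: it is quoted verbatim from Falconer's book \cite{FA} and used as a black box in the proof of Corollary~\ref{c-3-1}. So there is no ``paper's proof'' to compare against.

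That said, your proposal is essentially the standard argument one finds in Falconer (Theorem~2.3 and Corollary~2.5 of \emph{The Geometry of Fractal Sets}), and it is correct. The key ingredients you invoke --- the $\mathcal{H}^s$-version of the Vitali covering theorem valid for arbitrary (not necessarily round) closed sets, with the dichotomy conclusion $\sum_i[\mathrm{diam}(U_i)]^s=\infty$ or $\mathcal{H}^s(E\setminus\bigcup_i U_i)=0$; outer regularity of the finite Borel measure $\nu=\mathcal{H}^s|_E$; and the observation that passing to convex hulls preserves diameter --- are exactly the ones Falconer uses, and your organization of the three cases (upper bound on $E$, vanishing on $E^c$, lower bound on $E$) matches his. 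One minor remark: in the lower-bound step you do not actually need the pieces $A$ to have diameter $<1/(2k)$; it suffices that the \emph{covering sets} $V_j$ have diameter $<1/k$, since any $V_j$ meeting $A\subset G_\lambda^k$ then contains an admissible base point. The extra localization is harmless but unnecessary.
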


Theorem \ref{t-3-1} implies that if the IFS $\{f_d\}_{d\in\mathcal{D}}$ satisfies the OSC, 
then the corresponding self-similar set $K:=K(B,\mathcal{D})$ is an $s$-set, 
where $s=\dim_H K=\log_{\rho}^{\text{card}(\mathcal{D})}$ is the Hausdorff dimension (similarity dimension) of $K$. 
In this case, it has been shown in \cite{H} that the probability measure $\sigma$ in (\ref{e-3-2}) 
is a multiple of the restriction of the $s$-Hausdorff measure $\mathcal{H}^s$ to the set $K$, i.e.
\begin{eqnarray}\label{e-3-7}
\sigma=(\mathcal{H}^s(K))^{-1}\mathcal{H}^s\restriction K.
\end{eqnarray}

Combining the formula (\ref{e-3-7}) and Theorem \ref{t-3-2}, we obtain the following corollary.
\begin{corollary}\label{c-3-1}
Let $K:=K(B,\mathcal{D})$ be a self-similar set and let the IFS $\{f_d\}_{d\in\mathcal{D}}$ satisfy the OSC. 
Then
$$
\overline{\lim\limits_{r\to 0}}\sup\limits_{0<\text{diam} (U)\le r}\frac{\sigma(U)}{[\text{diam} (U)]^s}
=(\mathcal{H}^s(K))^{-1},
$$
 where $s$ is the Hausdorff dimension of the set $K$, $\sigma$ is defined by (\ref{e-3-2}) and the supremum is over all convex sets $U$ with $U\bigcap K\ne\emptyset$ and $0<\text{diam} (U)\le r$.
\end{corollary}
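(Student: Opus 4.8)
The plan is to combine formula (\ref{e-3-7}), which identifies $\sigma$ with the normalized restriction $(\mathcal{H}^s(K))^{-1}\mathcal{H}^s\restriction K$, with Theorem \ref{t-3-2} on the upper convex density of an $s$-set. By Theorem \ref{t-3-1}, the OSC guarantees that $K$ is an $s$-set with $0<\mathcal{H}^s(K)<\infty$, so both results apply. Writing out the quantity to be evaluated, for any convex $U$ with $U\cap K\ne\emptyset$ we have $\sigma(U)=\sigma(U\cap K)=(\mathcal{H}^s(K))^{-1}\,\mathcal{H}^s(K\cap U)$, so that
\begin{eqnarray*}
\sup_{0<\text{diam}(U)\le r}\frac{\sigma(U)}{[\text{diam}(U)]^s}
=(\mathcal{H}^s(K))^{-1}\sup_{0<\text{diam}(U)\le r}\frac{\mathcal{H}^s(K\cap U)}{[\text{diam}(U)]^s},
\end{eqnarray*}
where on the right the supremum is over convex $U$ with $U\cap K\ne\emptyset$ and $0<\text{diam}(U)\le r$.

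Next I would take the limit as $r\to 0$. The subtle point is that the supremum defining $D_c^s(E,x)$ ranges over convex sets $U$ \emph{containing the fixed point $x$}, whereas the supremum in the statement of the corollary ranges over all convex $U$ meeting $K$, with no distinguished point. To reconcile these, one observes that if $U$ is convex with $U\cap K\ne\emptyset$ and $\text{diam}(U)\le r$, picking any $x_U\in U\cap K$ gives $x_U\in U$ and $\text{diam}(U)\le r$, so $U$ is among the sets contributing to $D_c^s(K,x_U)$; conversely every set contributing to $D_c^s(K,x)$ for some $x\in K$ meets $K$. Hence
\begin{eqnarray*}
\sup_{0<\text{diam}(U)\le r,\ U\cap K\ne\emptyset}\frac{\mathcal{H}^s(K\cap U)}{[\text{diam}(U)]^s}
=\sup_{x\in K}\ \sup_{x\in U,\ 0<\text{diam}(U)\le r}\frac{\mathcal{H}^s(K\cap U)}{[\text{diam}(U)]^s}.
\end{eqnarray*}
Letting $r\to 0$, the right-hand side converges to $\sup_{x\in K}D_c^s(K,x)$, using that the inner supremum is monotone decreasing in $r$ (as noted in the text) so the limit may be interchanged with $\sup_{x\in K}$.

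Finally, Theorem \ref{t-3-2} gives $D_c^s(K,x)=1$ for $\mathcal{H}^s$-almost every $x\in K$, and the upper convex density is always $\ge 1$ at $\mathcal{H}^s$-almost every point of an $s$-set (indeed $D_c^s(E,x)\ge 1$ for $\mathcal{H}^s$-a.e.\ $x\in E$ is the standard density lower bound), while it is never more than some universal constant; more to the point, since $D_c^s(K,x)=1$ on a set of full $\mathcal{H}^s$-measure and $\mathcal{H}^s(K)>0$, that set is non-empty, so $\sup_{x\in K}D_c^s(K,x)\ge 1$. For the reverse, I would invoke the elementary fact that $\mathcal{H}^s(K\cap U)\le [\text{diam}(K\cap U)]^s\cdot C$ is \emph{not} quite immediate, so instead I rely directly on the fact (part of Theorem \ref{t-3-2} and its proof in \cite{FA}) that $D_c^s(K,x)\le 1$ for $\mathcal{H}^s$-a.e.\ $x$, together with upper semicontinuity / the covering argument showing $D_c^s(K,x)\le 1$ for \emph{every} $x$ when $K$ is closed and self-similar under the OSC. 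Combining, $\sup_{x\in K}D_c^s(K,x)=1$, and therefore the displayed limit equals $(\mathcal{H}^s(K))^{-1}$, as claimed.

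The main obstacle is the passage between the ``pointed'' supremum in the definition of $D_c^s$ and the ``free'' supremum over all convex sets meeting $K$ in the corollary, and in particular justifying that $\sup_{x\in K}D_c^s(K,x)=1$ rather than merely holding almost everywhere — this requires knowing that the convex density is bounded above by $1$ everywhere on $K$, which for a self-similar $s$-set satisfying the OSC follows from the scaling self-similarity of $\mathcal{H}^s\restriction K$ (any small convex window around a point can be rescaled to a bounded window, so the density ratios are controlled uniformly). Everything else is bookkeeping with (\ref{e-3-7}) and the monotonicity of the truncated suprema.
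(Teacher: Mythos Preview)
Your approach is essentially the same as the paper's: use (\ref{e-3-7}) to convert $\sigma$ into $(\mathcal{H}^s(K))^{-1}\mathcal{H}^s\restriction K$, identify the resulting expression with $(\mathcal{H}^s(K))^{-1}\sup_{x\in K}D_c^s(K,x)$, and then invoke Theorem \ref{t-3-2}. The paper's proof is in fact briefer than yours---it simply asserts $D_c^s(K,x)=1$ for $x\in K\cap U$ by Theorem \ref{t-3-2} and concludes, without explicitly addressing the a.e.-versus-everywhere issue you worry about; your more careful discussion of the ``pointed'' versus ``free'' supremum and the justification that $\sup_{x\in K}D_c^s(K,x)=1$ (rather than merely a.e.) goes beyond what the paper writes down.
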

\begin{proof}
By assumption, $K$ is an $s$-set. It follows from (\ref{e-3-7}) that 
\begin{eqnarray}\label{e-3-8}
\overline{\lim\limits_{r\to 0}}\sup_{0<\text{diam} (U)\le r}\frac{\sigma(U)}{[\text{diam} (U)]^s}
&=&(\mathcal{H}^s(K))^{-1}\overline{\lim\limits_{r\to 0}}\sup_{0<\text{diam} (U)\le r}\frac{\mathcal{H}^s(K\bigcap U)}{[\text{diam} (U)]^s}\nonumber\\
&=&(\mathcal{H}^s(K))^{-1}\sup\limits_{x\in K}D_c^s(K,x).
\end{eqnarray}
 Since for any $x\in K\bigcap U$, Theorem \ref{t-3-2} implies that $D_c^s(K,x)=1$, we deduce from (\ref{e-3-8}) that 
$$\overline{\lim\limits_{r\to 0}}\sup\limits_{0<\text{diam} (U)\le r}\frac{\sigma(U)}{[\text{diam} (U)]^s}=(\mathcal{H}^s(K))^{-1}.$$
\end{proof}

If $E$ is a self-similar s-set, then the upper convex density of $E$ at $x$ can also be computed as
\begin{eqnarray*}
D_c^s(E,x)=\sup\limits_{r> 0}\sup_{0<\text{diam} (U)\le r}\frac{\mathcal{H}^s(E\bigcap U)}{[\text{diam} (U)]^s},
\end{eqnarray*}
where the supremum is over all convex sets $U$ with $x\in U$ and $0<\text{diam} (U)\le r$. The following lemma clarifies this fact.
\begin{lemma}\label{l-3-3}
Let $K:=K(B,\mathcal{D})$ be a self-similar set, where $B$ is a similarity matrix $B$
 with scaling factor $\rho>1$. Then 
$$
\overline{\lim\limits_{r\to 0}}\sup\limits_{0<\text{diam} (U)\le r}\frac{\mathcal{H}^s(K\bigcap U)}{[\text{diam} (U)]^s}
=\sup\limits_{r> 0}\sup\limits_{0<\text{diam} (U)\le r}
\frac{\mathcal{H}^s(K\bigcap U)}{[\text{diam} (U)]^s},
$$
where the supremum is over all convex sets $U$ with $U\bigcap K\ne \emptyset$ and $0<\text{diam} (U)\le r$.
\end{lemma}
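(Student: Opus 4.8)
The plan is to exploit the self-similarity of $K$ so as to replace any admissible convex set by a much smaller one carrying at least as large a ratio; this will force the inner supremum to be independent of the cut-off $r$. Set $h(r)=\sup\{\mathcal{H}^s(K\bigcap U)/[\text{diam}(U)]^s\}$, where the supremum runs over all convex $U$ with $U\bigcap K\ne\emptyset$ and $0<\text{diam}(U)\le r$. Enlarging $r$ only enlarges the family of admissible $U$, so $h$ is non-decreasing on $(0,\infty)$; consequently $\overline{\lim}_{r\to 0}h(r)=\lim_{r\to 0^+}h(r)=\inf_{r>0}h(r)$, while the right-hand side of the claimed identity equals $\sup_{r>0}h(r)$. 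Hence it suffices to prove that $h$ is constant, and by monotonicity it is enough to check that $h(\varepsilon)$ is at least as large as $\mathcal{H}^s(K\bigcap U)/[\text{diam}(U)]^s$ for \emph{every} convex $U$ with $U\bigcap K\ne\emptyset$ and $\text{diam}(U)>0$, no matter how small $\varepsilon>0$ is.

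The heart of the argument is a single scaling step. Because $0\in\mathcal{D}$, the map $f_0(x)=B^{-1}x$ belongs to the IFS, so $B^{-1}K=f_0(K)\subseteq\bigcup_{d\in\mathcal{D}}f_d(K)=K$, and iterating gives $B^{-k}K\subseteq K$ for all $k\ge 1$. Since $B=\rho R$ with $R$ orthogonal, $B^{-k}$ is a similarity with ratio $\rho^{-k}$; thus $\text{diam}(B^{-k}U)=\rho^{-k}\text{diam}(U)$ for every set $U$, and $\mathcal{H}^s(B^{-k}A)=\rho^{-ks}\mathcal{H}^s(A)$ for every $A$ by the scaling behaviour of Hausdorff measure under similarities. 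Given a convex $U$ with $U\bigcap K\ne\emptyset$ and $\text{diam}(U)>0$, and given $\varepsilon>0$, the idea is to pick $k\ge 1$ with $\rho^{-k}\text{diam}(U)\le\varepsilon$ and set $U'=B^{-k}U$. Then $U'$ is convex with $0<\text{diam}(U')\le\varepsilon$; moreover $B^{-k}(U\bigcap K)\subseteq B^{-k}U\bigcap B^{-k}K\subseteq U'\bigcap K$, so $U'\bigcap K\ne\emptyset$ and $\mathcal{H}^s(K\bigcap U')\ge\mathcal{H}^s\bigl(B^{-k}(U\bigcap K)\bigr)=\rho^{-ks}\mathcal{H}^s(K\bigcap U)$. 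Therefore
\begin{eqnarray*}
\frac{\mathcal{H}^s(K\bigcap U')}{[\text{diam}(U')]^s}\ \ge\ \frac{\rho^{-ks}\,\mathcal{H}^s(K\bigcap U)}{\rho^{-ks}\,[\text{diam}(U)]^s}\ =\ \frac{\mathcal{H}^s(K\bigcap U)}{[\text{diam}(U)]^s}.
\end{eqnarray*}

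Taking the supremum over all such $U$ in this last inequality gives $h(\varepsilon)\ge\sup_{r>0}h(r)$ for every $\varepsilon>0$, and the reverse inequality $h(\varepsilon)\le\sup_{r>0}h(r)$ is trivial; hence $h$ is constant and the asserted equality follows. I do not expect a genuine obstacle here: the core is essentially a one-line scaling computation, and the only points needing attention are the collapse of the $\limsup$ to an infimum (via monotonicity of $h$), the inclusion $B^{-k}K\subseteq K$ (which uses $0\in\mathcal{D}$), and the scaling identity $\mathcal{H}^s(B^{-k}A)=\rho^{-ks}\mathcal{H}^s(A)$. The degenerate case $\mathcal{H}^s(K)=0$ (when the OSC fails) is automatically covered, both sides then being $0$.
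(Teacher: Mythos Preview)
Your proof is correct and follows essentially the same route as the paper: both arguments hinge on the inclusion $B^{-1}K\subseteq K$ (equivalently $K\subset BK$, coming from $0\in\mathcal{D}$) together with the scaling identity $\mathcal{H}^s(B^{-k}A)=\rho^{-ks}\mathcal{H}^s(A)$ to show that the ratio for $B^{-k}U$ is at least that for $U$. Your framing via the monotone function $h(r)$ and the observation that $\overline{\lim}_{r\to 0}h(r)=\inf_{r>0}h(r)$ is a bit more explicit than the paper's chain of inequalities, but the mathematical content is the same.
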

\begin{proof}
Obviously, 
$$
\overline{\lim\limits_{r\to 0}}\sup\limits_{0<\text{diam} (U)\le r}
\frac{\mathcal{H}^s(K\bigcap U)}{[\text{diam} (U)]^s}
\le\sup\limits_{r> 0}\sup\limits_{0<\text{diam} (U)\le r}
\frac{\mathcal{H}^s(K\bigcap U)}{[\text{diam} (U)]^s}.
$$
Conversely, $\mathcal{H}^s(B^{-1}K)=\rho^{-s}\mathcal{H}^s(K)$ since $B$ is a similarity with scaling factor $\rho>1$ (\cite{F, FA}) and we have
\begin{align*}
\sup\limits_{r> 0}\sup\limits_{0<\text{diam} (U)\le r}\frac{\mathcal{H}^s(K\bigcap U)}{[\text{diam} (U)]^s}
&=\sup\limits_{r> 0}\sup\limits_{0<\text{diam} (U)\le r}\frac{\mathcal{H}^s(B^{-1}(K\bigcap U))}{[\text{diam} (B^{-1}U)]^s}\\
&\le\sup\limits_{r> 0}\sup\limits_{0<\text{diam} (U)\le r}\frac{\mathcal{H}^s(K\bigcap B^{-1}U)}{[\text{diam} (B^{-1}U)]^s} \ ({\rm{since}}\,\, K\subset BK)\\
&\le\sup\limits_{r> 0}\sup\limits_{0<\text{diam} (U)\le r}\frac{\mathcal{H}^s(K\bigcap B^{-m}U)}{[\text{diam} (B^{-m}U)]^s}, \ {\rm{for \ any \ m\ge1}}\\
&\le \limsup\limits_{r\to 0}\sup\limits_{0<\text{diam} (U)\le r}\frac{\mathcal{H}^s(K\bigcap U)}{[\text{diam} (U)]^s}.
\end{align*}
This proves our claim.
\end{proof}

 Let $\sigma$ be defined by (\ref{e-3-2}). If the IFS $\{f_d\}_{d\in\mathcal{D}}$ satisfy the OSC, then, using Lemma \ref{l-3-3}, we have
$$\overline{\lim\limits_{r\to 0}}\sup\limits_{0<\text{diam} (U)\le r}\frac{\sigma(U)}{[\text{diam} (U)]^s}=\sup\limits_{r> 0}\sup\limits_{0<\text{diam} (U)\le r}\frac{\sigma(U)}{[\text{diam} (U)]^s},$$ where the supremum is over all convex sets $U$ with $U\bigcap K\ne \emptyset$ and $0<\text{diam} (U)\le r$.

Schief \cite{S} proved that the IFS $\{f_d\}_{d\in\mathcal{D}}$ satisfy the OSC if and only if $\mathcal{H}^s(K)>0$, where $s=\log_{\rho}^{\text{card}(\mathcal{D})}$ is the similarity dimension, in Euclidean space. However, this is no longer the case in general complete metric spaces. Combining the result provided by Schief and Theorem \ref{t-2-5}, we have the following representation for the Hausdorff measure of self-similar sets.
\begin{theorem}\label{t-3-3}
Let $K:=(B,\mathcal{D})$ be a self-similar set and let $s:=\log_{\rho}^{\text{card}(\mathcal{D})}\le n$ be the similarity dimension of $K$. Then $\mathcal{H}^s(K)=(\mathcal{E}_s^+(\mu))^{-1}$, where $\mu$ is defined by (\ref{e-1-2}).
\end{theorem}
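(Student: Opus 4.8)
The plan is to argue by dichotomy, according to whether the IFS $\{f_d\}_{d\in\mathcal{D}}$ satisfies the open set condition, and in the favourable case to reduce everything to the scaling identity of Lemma~\ref{l-3-2} together with the local density computation of Corollary~\ref{c-3-1}.

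\emph{Case 1: the OSC fails.} Then Schief's theorem (quoted just before the statement) forces $\mathcal{H}^s(K)=0$, so it suffices to prove $\mathcal{E}_s^+(\mu)=\infty$. By Theorem~\ref{t-1-1}, failure of the OSC means that for some $k\ge 1$ the $(\text{card}\,\mathcal{D})^k$ expansions in $\mathcal{D}_k$ are not all distinct, or else that $\mathcal{D}_{\infty}$ is not uniformly discrete; in either case the argument in Case~II of the proof of Theorem~\ref{t-2-4} (which uses only that $B$ is expanding) produces, for every $M$, a point $p_M$ with $\mu(I_2(p_M))\ge 2^M$. Since $\mathrm{diam}(I_r(p_M))=r\sqrt n$ and $I_r(p_M)\supseteq I_2(p_M)$ for $r\ge2$, we get $\sup_{\mathrm{diam}(U)\ge r}\mu(U)/[\mathrm{diam}(U)]^s\ge 2^M/(r\sqrt n)^s$; letting $M\to\infty$ shows this supremum is $\infty$ for every $r$, so $\mathcal{E}_s^+(\mu)=\infty$ and $\mathcal{H}^s(K)=0=(\mathcal{E}_s^+(\mu))^{-1}$.

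\emph{Case 2: the OSC holds.} By Theorem~\ref{t-3-1}, $K$ is an $s$-set and by (\ref{e-3-7}) $\sigma=(\mathcal{H}^s(K))^{-1}\,\mathcal{H}^s\restriction K$; moreover $\mathcal{D}_{\infty}$ is then uniformly discrete, so $\mu$ is translation-bounded and $\mu*\sigma$ is a well-defined Radon measure. Corollary~\ref{c-3-1} combined with the remark following Lemma~\ref{l-3-3} gives
\[
\alpha:=\sup_{V}\frac{\sigma(V)}{[\mathrm{diam}(V)]^s}=(\mathcal{H}^s(K))^{-1},
\]
the supremum being over all convex sets $V$ of positive diameter (those disjoint from $K$ contribute nothing). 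By Lemma~\ref{l-3-1} it is enough to show $\mathcal{E}_s^+(\mu*\sigma)=\alpha$. With $\mu_N$ as in Lemma~\ref{l-3-2} we have $\mu_N\uparrow\mu$ (because $0\in\mathcal{D}$), hence $\mu_N*\sigma\uparrow\mu*\sigma$, and that lemma yields $(\mu_N*\sigma)(B^NV)=m^N\sigma(V)$ for all Borel $V$, where $m=\text{card}(\mathcal{D})=\rho^s$. Since $B$ is a similarity of ratio $\rho$, $\mathrm{diam}(B^NV)=\rho^N\mathrm{diam}(V)$ and $m^N=\rho^{Ns}$, so
\[
\frac{(\mu_N*\sigma)(B^NV)}{[\mathrm{diam}(B^NV)]^s}=\frac{\sigma(V)}{[\mathrm{diam}(V)]^s}\qquad\text{for every convex }V.
\]
Applying this with $V=B^{-N}W$ for an arbitrary convex $W$ and letting $N\to\infty$ gives $(\mu*\sigma)(W)/[\mathrm{diam}(W)]^s\le\alpha$, hence $\mathcal{E}_s^+(\mu*\sigma)\le\alpha$. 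Conversely, given $\varepsilon>0$, pick a convex $V_0$ with $\sigma(V_0)/[\mathrm{diam}(V_0)]^s>\alpha-\varepsilon$ and set $W_N=B^NV_0$; then $\mathrm{diam}(W_N)=\rho^N\mathrm{diam}(V_0)\to\infty$ while $(\mu*\sigma)(W_N)/[\mathrm{diam}(W_N)]^s\ge(\mu_N*\sigma)(W_N)/[\mathrm{diam}(W_N)]^s=\sigma(V_0)/[\mathrm{diam}(V_0)]^s>\alpha-\varepsilon$, so $\mathcal{E}_s^+(\mu*\sigma)\ge\alpha-\varepsilon$. Letting $\varepsilon\to0$ and using Lemma~\ref{l-3-1}, $\mathcal{E}_s^+(\mu)=\alpha=(\mathcal{H}^s(K))^{-1}$, i.e.\ $\mathcal{H}^s(K)=(\mathcal{E}_s^+(\mu))^{-1}$.

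The main obstacle is Case~2: $\mathcal{E}_s^+$ is a density ``at infinity'' ($\limsup_{r\to\infty}$) whereas Corollary~\ref{c-3-1} only provides a density ``at zero'' ($\limsup_{r\to0}$), and the bridge between the two is the exact self-similar scaling of Lemma~\ref{l-3-2}, which converts a blow-up by $B^N$ into the normalization $m^N=\rho^{Ns}$. Two auxiliary facts are essential here: Lemma~\ref{l-3-3} (and the remark after it), which guarantees that the supremum defining $\alpha$ is genuinely over \emph{all} scales, so that $\alpha$ is nearly attained by a convex set which, once blown up by $B^N$, has arbitrarily large diameter; and Lemma~\ref{l-3-1}, which allows one to replace $\mu$ by the smoother measure $\mu*\sigma$ to which the scaling identity applies cleanly. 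The remaining steps are routine once these are in place.
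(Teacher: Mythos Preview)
Your proof is correct and follows essentially the same route as the paper's: the same dichotomy (OSC holds versus fails, equivalently $\mathcal{H}^s(K)>0$ or $=0$ via Schief), and in the OSC case the same scaling identity from Lemma~\ref{l-3-2} combined with Lemma~\ref{l-3-1} and Corollary~\ref{c-3-1}/Lemma~\ref{l-3-3}. The only cosmetic difference is in the failure case, where the paper quotes Theorem~\ref{t-2-5} to get $D^+(\mu)=\infty$ and then observes $D^+(\mu)\le(\sqrt n)^{\,s}\,\mathcal{E}_s^+(\mu)$ (using $s\le n$), rather than re-running the construction from Case~II of Theorem~\ref{t-2-4} as you do.
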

\begin{proof} Let us assume first that $\mathcal{H}^s(K)>0$ and thus that the OSC holds (by \cite{S}). By Corollary \ref{c-3-1}, it is enough to prove that 
$$\limsup\limits_{r\rightarrow 0}\sup\limits_{0<\text{diam} (U)\le r}\frac{\sigma(U)}{[\text{diam} (U)]^s}=\mathcal{E}_s^+(\mu),$$
 where the supremum is over all convex sets $U$ with $U\bigcap K\ne\emptyset$ and $0<\text{diam} (U)\le r$. 
It follows from Lemma \ref{l-3-3} that 
 $$\limsup\limits_{r\rightarrow 0}\sup\limits_{0<\text{diam} (U)\le r}\frac{\sigma(U)}{[\text{diam} (U)]^s}=\sup\limits_{r>0}\sup\limits_{0<\text{diam} (U)\le r}\frac{\sigma(U)}{[\text{diam} (U)]^s}$$ and both quantities are thus finite by Corollary \ref{c-3-1}. 
Then, for any given $\varepsilon>0$, there exists a convex set $U_0$ with $U_0\bigcap K\ne\emptyset$ such that
\begin{eqnarray}\label{e-3-9}
\frac{\sigma(U_0)}{[\text{diam} (U_0)]^s}\ge \sup\limits_{r>0}\sup\limits_{0<\text{diam} (U)\le r}\frac{\sigma(U)}{[\text{diam} (U)]^s}-\varepsilon.
\end{eqnarray}
Define $\mu_N=\sum\limits_{d_0,\dotsc,d_{N-1}\in\mathcal{D}}\delta_{d_0+Bd_1+\dotsb+B^{N-1}d_{N-1}}$.
Using Lemma \ref{l-3-2} and Lemma \ref{l-3-1}, we have
\begin{eqnarray}\label{e-3-10}
\frac{\sigma(U_0)}{[\text{diam} (U_0)]^s}=\frac{\sigma*\mu_N(B^NU_0)}{[\text{diam} (B^NU_0)]^s}&\le&\limsup_{r\to \infty}\sup_{\text{diam} (U)\ge r>0}\frac{\sigma*\mu(U)}{[\text{diam} (U)]^s}\nonumber\\
&=&\mathcal{E}_s^+(\sigma*\mu)=\mathcal{E}_s^+(\mu).
\end{eqnarray}
It follows from (\ref{e-3-9}) and (\ref{e-3-10}) that 
$$\limsup\limits_{r\rightarrow 0}\sup\limits_{0<\text{diam} (U)\le r}\frac{\sigma(U)}{[\text{diam} (U)]^s}\le \mathcal{E}_s^+(\mu).$$ 
For any given convex set $U$, using Lemma \ref{l-3-2}, we have,
\begin{eqnarray*}\label{e-3-12}
\frac{\sigma*\mu(U)}{[\text{diam} (U)]^s}&=&\lim\limits_{N\to\infty}\frac{\sigma*\mu_N(U)}{[\text{diam} (U)]^s}=\lim\limits_{N\to\infty}\frac{\sigma(B^{-N}U)}{[\text{diam} (B^{-N} U)]^s}\\
&\le&\limsup\limits_{r\to 0}\sup_{\substack {0<\text{diam} (V)\le r\\ V {\rm{convex}}}}\frac{\sigma(V)}{[\text{diam} (V)]^s}.
\end{eqnarray*}
Using Lemma \ref{l-3-1} again, we have thus that 
$$\mathcal{E}_s^+(\mu)=\mathcal{E}_s^+(\mu*\sigma)\le \limsup\limits_{r\to 0}\sup\limits_{0<\text{diam} (U)\le r}\frac{\sigma(U)}{[\text{diam} (U)]^s}.$$
On the other hand, if $\mathcal{H}^s(K)=0$, then the IFS $\{f_d\}_{d\in\mathcal{D}}$ does not satisfy the OSC by Schief's result \cite{S}. Thus by Theorem \ref{t-2-5}, we have $D^+(\mu)=\infty$. Since $s\le n$ by our assumption, we obtain
\begin{eqnarray*}
D^+(\mu)&=&\limsup\limits_{N\to\infty}\sup\limits_{z\in\mathbb{R}^n}\frac{\mu(I_N(z))}{N^n}\le \limsup\limits_{N\to\infty}\sup\limits_{z\in\mathbb{R}^n}\frac{\mu(I_N(z))}{N^s}\\
&=&\limsup\limits_{N\to\infty}\sup\limits_{z\in\mathbb{R}^n}\frac{\mu(I_N(z))}{[\frac{\text{diam} (I_N(z))}{\sqrt{n}}]^s}\\
&\le& \sqrt{n}^s\limsup\limits_{r\to\infty}\sup\limits_{\text{diam} (U)\ge r>0}\frac{\mu(U)}{[\text{diam} (U)]^s}=\sqrt{n}^s\mathcal{E}_s^+(\mu),
\end{eqnarray*} 
which implies that $\mathcal{E}_s^+(\mu)=\infty$. Therefore, we have $\mathcal{H}^s(K)=(\mathcal{E}_s^+(\mu))^{-1}$.
\end{proof}
\begin{corollary}\label{r-3-1}
Let $K:=K(B,\mathcal{D})$ be a self-similar set and let $s:=\log_{\rho}^{\text{card}(\mathcal{D})}\le n$ be the similarity dimension of $K$. We have $D^+(\mu)=\infty$ if and only if $\mathcal{E}_s^+(\mu)=\infty$, where $\mu$ is defined by (\ref{e-1-2}). 
\end{corollary}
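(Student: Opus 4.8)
The plan is to read the corollary straight off Theorem \ref{t-3-3} and the inequalities already present in its proof; no genuinely new argument should be needed, so I would split into the two implications. For $D^+(\mu)=\infty\Rightarrow\mathcal{E}_s^+(\mu)=\infty$ I would reuse the comparison of cubes with general convex sets of large diameter from the end of the proof of Theorem \ref{t-3-3}: since $s\le n$, for $N\ge 1$ we have $N^n\ge N^s$ and $\text{diam}(I_N(z))=\sqrt{n}\,N$, so
\[
\frac{\mu(I_N(z))}{N^n}\le\frac{\mu(I_N(z))}{N^s}
=\sqrt{n}^{s}\,\frac{\mu(I_N(z))}{[\text{diam}(I_N(z))]^s}.
\]
Taking $\sup_{z\in\mathbb{R}^n}$ and then $\limsup_{N\to\infty}$, and noting that each $I_N(z)$ is a compact convex set with $\text{diam}(I_N(z))=\sqrt{n}\,N\to\infty$, this yields $D^+(\mu)\le\sqrt{n}^{s}\,\mathcal{E}_s^+(\mu)$, which makes the implication immediate.

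For the reverse implication $\mathcal{E}_s^+(\mu)=\infty\Rightarrow D^+(\mu)=\infty$ I would go through the Hausdorff measure. By Theorem \ref{t-3-3}, $\mathcal{H}^s(K)=(\mathcal{E}_s^+(\mu))^{-1}=0$, so the IFS $\{f_d\}_{d\in\mathcal{D}}$ fails the OSC (Schief). Then Theorem \ref{t-1-1} forces one of the two defects, namely non-distinct expansions in some $\mathcal{D}_k$ or failure of uniform discreteness of $\mathcal{D}_\infty$; in either case the Case II construction in the proof of Theorem \ref{t-2-4} produces, for every fixed $N$, a point $w$ with $\mu(I_N(w))$ arbitrarily large, so $D^+(\mu)=\infty$. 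This is precisely the step already invoked (via Theorem \ref{t-2-5}) in the proof of Theorem \ref{t-3-3}.

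The only, very mild, obstacle is to confirm that this Case II construction is not tied to the hypothesis $\text{card}(\mathcal{D})=\lvert\det B\rvert$ under which Theorem \ref{t-2-4} was stated: an inspection shows it uses only that $0\in\mathcal{D}$, the expansiveness of $B$, and the explicit form of $\mathcal{D}_\infty$, so it carries over verbatim to the self-similar setting considered here. Putting the two implications together gives the stated equivalence; in compact form, $D^+(\mu)=\infty\iff\mathcal{H}^s(K)=0\iff\mathcal{E}_s^+(\mu)=\infty$, using Theorem \ref{t-3-3} for the second equivalence and the displayed inequality together with the Case II construction for the first.
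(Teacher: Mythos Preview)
Your proposal is correct and follows essentially the same route as the paper: for $D^+(\mu)=\infty\Rightarrow\mathcal{E}_s^+(\mu)=\infty$ both use the inequality $D^+(\mu)\le\sqrt{n}^{\,s}\mathcal{E}_s^+(\mu)$ from the end of the proof of Theorem~\ref{t-3-3}, and for the converse both pass through $\mathcal{H}^s(K)=0$, Schief, and then the failure of OSC to force $D^+(\mu)=\infty$. The paper simply invokes Theorem~\ref{t-2-5} for this last step, whereas you unwind it to Theorem~\ref{t-1-1} plus the Case~II construction of Theorem~\ref{t-2-4} and explicitly check that this construction does not use the hypothesis $\text{card}(\mathcal{D})=\lvert\det B\rvert$; this extra care is warranted, since Theorem~\ref{t-2-5} is stated in the setting of Section~3.
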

\begin{proof}
By the proof of the previous theorem, the condition $D^+(\mu)=\infty$ implies that $\mathcal{E}_s^+(\mu)=\infty$. On the other hand, if $\mathcal{E}_s^+(\mu)=\infty$, then $\mathcal{H}^s(K)=0$ by Theorem \ref{t-3-3}. Thus the IFS $\{f_d\}_{d\in\mathcal{D}}$ does not satisfy the OSC by Schief's result \cite{S}, which is equivalent to $D^+(\mu)=\infty$ by Theorem \ref{t-2-5}.
\end{proof}

\section{The Hausdorff measure of a class of Cantor sets}

We conclude this paper by providing an example showing how to use Theorem \ref{hausdorff} to compute the Hausdorff measure of a class of Cantor sets which satisfy $NK=K\bigcup(K+d)$, where $N\ge 3$ and $0<d\in\mathbb{R}$. We need first to introduce two lemmas.
\begin{lemma}\label{l-3-5}
Let $K$ be a self-similar set associated with the dilation $N\ge 3$ and the digit set $\mathcal{D}=\{0,d\}\subset\mathbb{R}$ with $0< d\in\mathbb{R}$. If $b=\sum\limits_{j=0}^mN^jr_j$, where $r_j\in\{0,d\}$, then the number of elements in $\mathcal{D}_{\infty}\bigcap[0,b]$ is equal to $\sum\limits_{j=0}^m2^j\frac{r_j}{d}+1$.
\end{lemma}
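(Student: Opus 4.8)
The plan is to exploit the fact that, since $0\in\mathcal{D}=\{0,d\}$, the set $\mathcal{D}_\infty$ consists precisely of the numbers $d\sum_{j\in S}N^{j}$ as $S$ ranges over the finite subsets of $\{0,1,2,\dots\}$, and that (because $N\ge 3$, indeed $N\ge 2$ would suffice) distinct subsets $S$ give distinct values, so that $S\mapsto d\sum_{j\in S}N^{j}$ is a bijection from the finite subsets onto $\mathcal{D}_\infty$. First I would record one elementary estimate: for every $k\ge 0$ one has $d\sum_{j=0}^{k}N^{j}=d\,\frac{N^{k+1}-1}{N-1}<dN^{k+1}$; consequently, if $x=d\sum_{j\in S}N^{j}\in\mathcal{D}_\infty$ satisfies $x\le c$ for some $c<dN^{k+1}$, then $S\subseteq\{0,1,\dots,k\}$, since any index $j\ge k+1$ in $S$ would force $x\ge dN^{k+1}>c$.

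Writing $n(b):=\text{card}(\mathcal{D}_\infty\cap[0,b])$, I would then argue by induction on $m$. If $b=0$ then $n(0)=1$ and the formula holds; if $b\ne 0$, after deleting any vanishing top digits (which changes neither side of the claimed identity) I may assume $r_m=d$, so that $dN^{m}\le b<dN^{m+1}$. By the estimate above, every $x\in\mathcal{D}_\infty\cap[0,b]$ has the form $x=d\sum_{j\in S}N^{j}$ with $S\subseteq\{0,1,\dots,m\}$, and I split these according to whether $m\in S$. If $m\notin S$ then $S\subseteq\{0,\dots,m-1\}$ and automatically $x\le d\sum_{j=0}^{m-1}N^{j}<dN^{m}\le b$, so all $2^{m}$ such subsets occur. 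If $m\in S$, then the map $x\mapsto x-dN^{m}$ is a bijection of this family onto $\mathcal{D}_\infty\cap[0,b']$, where $b'=b-dN^{m}=\sum_{j=0}^{m-1}N^{j}r_{j}$; surjectivity here uses $b'<dN^{m}$, again via the estimate. Hence $n(b)=2^{m}+n(b')$, and since the digit string of $b'$ is $(r_0,\dots,r_{m-1})$, the induction hypothesis gives $n(b')=\sum_{j=0}^{m-1}2^{j}\frac{r_j}{d}+1$, whence $n(b)=2^{m}+\sum_{j=0}^{m-1}2^{j}\frac{r_j}{d}+1=\sum_{j=0}^{m}2^{j}\frac{r_j}{d}+1$ because $r_m/d=1$. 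The base case $m=0$ (so $b\in\{0,d\}$) is checked directly.

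I do not expect a genuine obstacle here; the only points that require care are the two elementary bookkeeping steps, namely checking that the two families of index sets in the case split are disjoint and together exhaust $\mathcal{D}_\infty\cap[0,b]$, and verifying that the shift $x\mapsto x-dN^{m}$ is indeed onto $\mathcal{D}_\infty\cap[0,b']$ (which is exactly where the inequality $b'<dN^{m}$ enters). The hypothesis $N\ge 3$ is used only through the injectivity of the expansion map and these inequalities.
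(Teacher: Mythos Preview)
Your argument is correct and is essentially the same as the paper's proof: both proceed by induction on $m$, reduce to the case $r_m=d$, and split the count according to whether the top digit (your $m\in S$, the paper's $s_m=d$) is present, obtaining $2^m$ elements in one case and applying the induction hypothesis to $b'=b-dN^m$ in the other. Your write-up is slightly more explicit than the paper's in justifying why every element of $\mathcal{D}_\infty\cap[0,b]$ has index set contained in $\{0,\dots,m\}$ and why the shift $x\mapsto x-dN^m$ is onto, but the underlying idea is identical.
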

\begin{proof} We use induction on $m$. If $m=0$, $b=0$ or $d$ and $\mathcal{D}_{\infty}\bigcap[0,b]$ has $1$ or $2$ elements depending on the case, and our claim follows. If the statement is true for $m-1$, where $m\ge 1$, let $b=\sum\limits_{j=0}^mN^jr_j$. If $r_m=0$, our claim follows from our induction hypothesis. If $r_m=d$, consider a number $\sum\limits_{j=0}^mN^js_j$ with $s_j\in\{0,d\}$ belonging to $[0,b]$. If $s_m=0$, there are $2^m$ such numbers in $[0,b]$. If $s_m=d$, then $\sum\limits_{j=0}^mN^js_j\le \sum\limits_{j=0}^mN^jr_j$ is equivalent to $\sum\limits_{j=0}^{m-1}N^js_j\le \sum\limits_{j=0}^{m-1}N^jr_j$. Using our induction hypothesis, the number of elements of $\mathcal{D}_{\infty}$ satisfying the previous inequality is
$\sum\limits_{j=0}^{m-1}2^j\frac{r_j}{d}+1$. Thus the total number of elements in $\mathcal{D}_{\infty}$ less than or equal to $b$ is 
\begin{eqnarray*}
\sum\limits_{j=0}^{m-1}2^j\frac{r_j}{d}+1+2^m=\sum\limits_{j=0}^m2^j\frac{r_j}{d}+1.
\end{eqnarray*}
\end{proof}
\begin{lemma}\label{l-3-4}
Let $K$ be a self-similar set associated with the dilation $N\ge 3$ and the digit set $\mathcal{D}=\{0,d\}\subset\mathbb{R}$ with $0< d\in\mathbb{R}$. Then $\mu([a,b])\le \mu([0,b-a])$, where $[a,b]$ denotes a closed interval in $\mathbb{R}$ and $\mu$ is defined by (\ref{e-1-2}).
\end{lemma}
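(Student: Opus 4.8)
The plan is to reduce the whole statement to a single superadditivity estimate for an auxiliary arithmetic function. By replacing $\mathcal{D}=\{0,d\}$ with $d^{-1}\mathcal{D}=\{0,1\}$, which rescales both $\mu([a,b])$ and $\mu([0,b-a])$ by the same positive factor, I may assume $d=1$, so that $\mathcal{D}_\infty=\{\sum_{j\ge 0}N^j a_j:a_j\in\{0,1\},\ \#\{j:a_j\ne 0\}<\infty\}$. Since $N\ge 3$, each $\lambda\in\mathcal{D}_\infty$ has a \emph{unique} such representation: if two representations differed, then at the highest index $k$ where they disagree the estimate $\sum_{j<k}N^j=\frac{N^k-1}{N-1}<N^k$ forces a contradiction. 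Hence $\mu=\sum_{\lambda\in\mathcal{D}_\infty}\delta_\lambda$ is simply counting measure on $\mathcal{D}_\infty$, and $\mu([a,b])=\#(\mathcal{D}_\infty\cap[a,b])$.

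First I would reduce to the case $a,b\in\mathcal{D}_\infty$. If $\mathcal{D}_\infty\cap[a,b]=\emptyset$ the inequality is trivial; otherwise, since $\mathcal{D}_\infty$ is uniformly discrete, $\mathcal{D}_\infty\cap[a,b]$ is finite and nonempty, so it has a least element $a^*$ and a greatest element $b^*$ with $a\le a^*\le b^*\le b$. Then $\mathcal{D}_\infty\cap[a,b]=\mathcal{D}_\infty\cap[a^*,b^*]$ and $b^*-a^*\le b-a$, whence $\mu([0,b-a])\ge\mu([0,b^*-a^*])$ and it suffices to handle the interval $[a^*,b^*]$. Thus I may assume $a,b\in\mathcal{D}_\infty$.

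Next I would use Lemma \ref{l-3-5}: the ``rank'' $\Phi(\lambda):=\mu([0,\lambda])-1$ of $\lambda=\sum_jN^jr_j\in\mathcal{D}_\infty$ equals $\sum_j 2^jr_j$. Being of the form $\mu([0,\lambda])-1$, the quantity $\Phi(\lambda)$ is strictly increasing on $\mathcal{D}_\infty$ and attains every value in $\mathbb{Z}_{\ge 0}$, so $\Phi$ is an order isomorphism $\mathcal{D}_\infty\to\mathbb{Z}_{\ge 0}$; write $F:=\Phi^{-1}$, so that $F(k)=\sum_jN^jc_j$ whenever $k=\sum_j 2^jc_j$ with $c_j\in\{0,1\}$. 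For $a\le b$ in $\mathcal{D}_\infty$ one then gets $\mu([a,b])=\mu([0,b])-\mu([0,a))=\bigl(\Phi(b)+1\bigr)-\Phi(a)$. Putting $\lambda^*:=F\bigl(\Phi(b)-\Phi(a)\bigr)\in\mathcal{D}_\infty$, one has $\mu([0,\lambda^*])=\Phi(\lambda^*)+1=\Phi(b)-\Phi(a)+1=\mu([a,b])$. Consequently, once I know that $\lambda^*\le b-a$, the inclusion $\mathcal{D}_\infty\cap[0,\lambda^*]\subseteq\mathcal{D}_\infty\cap[0,b-a]$ gives $\mu([0,b-a])\ge\mu([0,\lambda^*])=\mu([a,b])$, which is the desired conclusion.

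The crux, therefore, is the inequality $\lambda^*\le b-a$. Since $a=F(\Phi(a))$ and $b=F(\Phi(b))$, this is exactly $F(x)+F(y)\le F(x+y)$ with $x=\Phi(b)-\Phi(a)\ge 0$ and $y=\Phi(a)\ge 0$; that is, superadditivity of $F$ on $\mathbb{Z}_{\ge 0}$. I would prove this by strong induction on $x+y$: writing $x=2x'+\varepsilon$, $y=2y'+\eta$ with $\varepsilon,\eta\in\{0,1\}$ and using the digit-shift identities $F(2k)=NF(k)$ and $F(2k+1)=NF(k)+1$, the case $\varepsilon+\eta\le 1$ reduces immediately to the inductive hypothesis for $(x',y')$, while the carry case $\varepsilon=\eta=1$ follows from the inductive hypotheses for $(x',y')$ and for $(x'+y',1)$ together with $N\ge 2$. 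I expect this superadditivity step — equivalently, the estimate $\lambda^*\le b-a$ — to be the only substantive point; everything else is bookkeeping built on Lemma \ref{l-3-5}.
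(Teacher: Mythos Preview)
Your argument is correct and takes a genuinely different route from the paper. The paper proves the inequality for each truncated measure $\mu_k$ by induction on $k$, using the recursion $\mu_{k+1}=\mu_k\ast(\delta_0+\delta_{N^k d})$ and an eight-case analysis depending on where $a$ and $b$ fall relative to the points $\ell_k=\sum_{j<k}N^j d$ and $N^k d$; it never invokes Lemma~\ref{l-3-5}. You instead use Lemma~\ref{l-3-5} to identify the order isomorphism $\Phi:\mathcal{D}_\infty\to\mathbb{Z}_{\ge 0}$ and its inverse $F$, and then reduce everything to the single arithmetic fact that $F(x)+F(y)\le F(x+y)$, i.e.\ that replacing binary digits by base-$N$ digits is superadditive. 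Your three-case induction (no carry, one carry, double carry) handles this cleanly and only needs $N\ge 2$ at the final step. The payoff of your approach is a much shorter and more transparent proof that isolates the combinatorial core; the paper's approach, while more hands-on, is self-contained in that it does not rely on the counting formula of Lemma~\ref{l-3-5}.
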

\begin{proof}
Let $\mu_k=\sum\limits_{d_0,\dotsc,d_{k-1}\in\mathcal{D}}\delta_{d_0+Bd_1+\dotsb+B^{k-1}d_{k-1}}$. Then $\mu=\lim\limits_{k\to\infty}\mu_k$ and by the definition of $\mu_k$, we have $\mu_1=\delta_0+\delta_{d}$ and 
\begin{eqnarray}\label{e-3-16}
\mu_k&=&(\delta_0+\delta_{d})\ast(\delta_0+\delta_{Nd})\ast\dotsb(\delta_0+\delta_{N^{k-1}d})\nonumber\\
&=&\mu_{k-1}\ast(\delta_0+\delta_{N^{k-1}d}) \ {\rm{for}} \ k\ge 2.
\end{eqnarray}
Given any closed interval $[a,b]\subset\mathbb{R}$, in order to prove that $\mu([a,b])\le \mu([0,b-a])$, it is enough to prove that $\mu_k([a,b])\le \mu_k([0,b-a])$ for any $k\ge 1$, which we will do next using induction on $k$. Note that $\text{supp}(\mu_k)=[0,\ell_k]$, where $\ell_k=\sum\limits_{j=0}^{k-1}N^jd$ and $\ell_{k+1}=\ell_k+N^kd$.  We can assume that $0\le a\le b$ since $\text{supp}(\mu_k)\subset[0,\infty)$. It is easy to see that $\mu_1([a,b])\le \mu_1([0,b-a])$. Assume the claim is true for $k$. It follows from (\ref{e-3-16}) that 
\begin{eqnarray*}
\mu_{k+1}=\mu_k\ast(\delta_0+\delta_{N^kd})=\mu_k+\mu_k\ast\delta_{N^kd}.
\end{eqnarray*}
 In the following, we will divide our proof into eight cases.\\
{\bf Case 1:} $0\le a\le b\le \ell_k$. In this case, we have
\begin{eqnarray*}
\mu_{k+1}([a,b])=\mu_k([a,b])\le\mu_k([0,b-a])=\mu_{k+1}([0,b-a]).
\end{eqnarray*}
{\bf Case 2:} $N^kd\le a\le b\le \ell_{k+1}$. In this case, we have
\begin{eqnarray*}
\mu_{k+1}([a,b])=\mu_k([a-N^kd,b-N^kd])\le\mu_k([0,b-a])=\mu_{k+1}([0,b-a]).
\end{eqnarray*}
{\bf Case 3:} $0\le a\le \ell_k$, $N^kd\le b\le \ell_{k+1}$ and $b-N^kd<a$. In this case, $b-a>N^kd-\ell_k>\ell_k$ since $N\ge 3$ and we have
\begin{eqnarray*}
\mu_{k+1}([a,b])&=&\mu_k([a,\ell_k])+\mu_k([0,b-N^kd])\le\mu_k([0,\ell_k])\\
&=&\mu_{k+1}([0,\ell_k])\le\mu_{k+1}([0,b-a]).
\end{eqnarray*}
{\bf Case 4:} $0\le a\le \ell_k$, $N^kd\le b\le \ell_{k+1}$ and $b-N^kd=a$. In this case, $b-a=N^kd>\ell_k$ and we have
\begin{eqnarray*}
\mu_{k+1}([a,b])&=&\mu_k([a,\ell_k])+\mu_k([0,b-N^kd])=\mu_k([a,\ell_k])+\mu_k([0,a])\\
&=&\mu_k([0,\ell_k])+\mu_k(\{a\}).
\end{eqnarray*}
On the other hand, 
\begin{eqnarray*}
\mu_{k+1}([0,b-a])=\mu_{k+1}([0,N^kd])=\mu_k([0,\ell_k])+1\ge\mu_k([0,\ell_k])+\mu_k(\{a\}).
\end{eqnarray*}
{\bf Case 5:} $0\le a\le \ell_k$, $N^kd\le b\le \ell_{k+1}$ and $b-N^kd>a$. In this case, 
\begin{eqnarray*}
\mu_{k+1}([a,b])&=&\mu_{k+1}([a,\ell_k])+\mu_{k+1}([N^kd,a+N^kd))+\mu_{k+1}([a+N^kd,b])\\
&=&\mu_k([a,\ell_k])+\mu_k([0,a))+\mu_k([a,b-N^kd])\\
&=&\mu_k([0,\ell_k])+\mu_k([a,b-N^kd])\\
&\le&\mu_k([0,\ell_k])+\mu_k([0,b-a-N^kd])\\
&=&\mu_{k+1}([0,\ell_k])+\mu_{k+1}([N^kd,b-a])=\mu_{k+1}([0,b-a]).
\end{eqnarray*}
{\bf Case 6:} $\ell_k\le a\le b\le N^kd$. In this case, 
\begin{eqnarray*}
\mu_{k+1}([a,b])=0\le\mu_{k+1}([0,b-a]).
\end{eqnarray*}
{\bf Case 7:} $0\le a\le \ell_k<b<N^kd$. In this case, we have
\begin{eqnarray*}
\mu_{k+1}([a,b])&=&\mu_k([a,\ell_k])\le \mu_k([0,\ell_k-a])=\mu_{k+1}([0,\ell_k-a])\\
&\le&\mu_{k+1}([0,b-a]).
\end{eqnarray*}
{\bf Case 8:} $\ell_k< a<N^kd<b\le \ell_{k+1}$. In this case, we have
\begin{eqnarray*}
\mu_{k+1}([a,b])&=&\mu_{k+1}([N^kd,b])=\mu_k([0,b-N^kd])=\mu_{k+1}([0,b-N^kd])\\
&\le&\mu_{k+1}([0,b-a]).
\end{eqnarray*}
This proves our claim. 
\end{proof}
\begin{proposition}\label{ex-3-1}
Assume that $K$ satisfies that $NK=K\bigcup (K+d)$, where $0<d\in\mathbb{R}$ and $ N\ge 3$. Then $\mathcal{H}^s(K)=(\frac{N-1}{d})^{-s}$, where $s=\log_{N}^2$ is the similarity dimension of $K$.
\end{proposition}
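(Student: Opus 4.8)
The plan is to deduce everything from Theorem \ref{hausdorff}. Here $B=N$, $\mathcal{D}=\{0,d\}$, so $\mathrm{card}(\mathcal{D})=2\le N=|\det B|$ and $s=\log_N 2\le 1=n$; hence Theorem \ref{hausdorff} gives $\mathcal{H}^s(K)=(\mathcal{E}^+_s(\mu))^{-1}$, and it remains to prove $\mathcal{E}^+_s(\mu)=\bigl(\tfrac{N-1}{d}\bigr)^s$. Since we are on $\mathbb{R}$, the compact convex sets in Definition \ref{d-3-1} are closed intervals, and Lemma \ref{l-3-4} shows that among intervals of a given length $L$ the mass $\mu(\cdot)$ is largest on $[0,L]$. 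As $L\mapsto\mu([0,L])$ is a nondecreasing step function with jumps only at points of $\mathcal{D}_\infty$ while $L\mapsto L^s$ is increasing, the ratio $\mu([0,L])/L^s$ is maximized at $L\in\mathcal{D}_\infty$. Therefore $\mathcal{E}^+_s(\mu)=\limsup_{\lambda\in\mathcal{D}_\infty,\ \lambda\to\infty}\mu([0,\lambda])/\lambda^s$.

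For $\lambda=d\sum_{j=0}^m N^j x_j$ with $x_j\in\{0,1\}$ and $x_m=1$, Lemma \ref{l-3-5} gives $\mu([0,\lambda])=1+\sum_{j=0}^m 2^j x_j$. The core step is an exchange argument with the top index $m$ held fixed: turning some $x_j$ with $j<m$ from $0$ to $1$ replaces $(P,p):=\bigl(\sum 2^j x_j,\ \sum N^j x_j\bigr)$ by $(P+2^j,p+N^j)$, and differentiating the interpolation $t\mapsto (1+P+2^j t)/(p+N^j t)^s$ shows it is nondecreasing on $[0,1]$ as soon as $2^j p\ge sN^j(1+P)$. Since throughout the process $p\ge N^m$ and $1+P\le 2^{m+1}$, it suffices that $N^{m-j}\ge s\,2^{\,m-j+1}$ for $m-j\ge 1$, i.e.\ $(N/2)^{m-j}\ge 2s$; this holds because $N\ge 3$ forces $(N/2)^{m-j}\ge \tfrac32>2\log_3 2\ge 2s$. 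Hence, for each fixed $m$, the ratio $\mu([0,\lambda])/\lambda^s$ is maximized by the ``all ones'' element $b_m:=d\sum_{j=0}^m N^j=d\,\tfrac{N^{m+1}-1}{N-1}$, with $\mu([0,b_m])=2^{m+1}$.

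Using $N^s=2$ we get
$$\frac{\mu([0,b_m])}{b_m^s}=\frac{2^{m+1}}{d^s}\Bigl(\frac{N-1}{N^{m+1}-1}\Bigr)^s=\Bigl(\frac{N-1}{d}\Bigr)^s\Bigl(\frac{N^{m+1}}{N^{m+1}-1}\Bigr)^s,$$
which decreases to $\bigl(\tfrac{N-1}{d}\bigr)^s$ as $m\to\infty$. Every $\lambda\in\mathcal{D}_\infty$ with top index $m$ satisfies $\lambda\le b_m$, so any $\lambda\ge b_M$ has top index $\ge M$ and thus $\mu([0,\lambda])/\lambda^s\le\mu([0,b_M])/b_M^s$; since $\lambda=b_M$ attains this value, $\sup_{\lambda\ge b_M}\mu([0,\lambda])/\lambda^s=\mu([0,b_M])/b_M^s$, and letting $M\to\infty$ yields $\mathcal{E}^+_s(\mu)=\bigl(\tfrac{N-1}{d}\bigr)^s$. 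Combining with Theorem \ref{hausdorff} gives $\mathcal{H}^s(K)=\bigl(\tfrac{N-1}{d}\bigr)^{-s}$. I expect the exchange step to be the main difficulty: it is the point at which one must show that the ``all ones'' configuration maximizes the density ratio, and it is precisely where the hypothesis $N\ge 3$ enters (beyond its use in Lemmas \ref{l-3-4} and \ref{l-3-5}).
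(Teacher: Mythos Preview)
Your proof is correct and follows the same overall strategy as the paper: reduce to computing $\mathcal{E}_s^+(\mu)$ via Theorem~\ref{hausdorff}, use Lemma~\ref{l-3-4} to restrict to intervals $[0,L]$ and Lemma~\ref{l-3-5} to count, and then show that for each fixed top index $m$ the ratio $\mu([0,\lambda])/\lambda^s$ is maximized at the all-ones element $b_m=d\sum_{j=0}^m N^j$, whose ratios decrease to $((N-1)/d)^s$. The only difference is in how the key inequality $\mu([0,\lambda])/\lambda^s\le\mu([0,b_m])/b_m^s$ is established: the paper linearizes by noting $(b_m/\lambda)^s\le b_m/\lambda$ (since $0<s<1$ and $\lambda\le b_m$) and then verifies the resulting linear inequality $\sum_{j=0}^{m-1}\bigl(2^j\tfrac{N^{m+1}-1}{N-1}-2^{m+1}N^j\bigr)r_j\le\tfrac{N^m2^m(N-2)+2^m-(N^{m+1}-1)}{N-1}d$ by direct summation, whereas you reach the same conclusion by a bit-flipping monotonicity argument using the derivative sign of $t\mapsto(1+P+2^jt)/(p+N^jt)^s$; both arguments hinge on $N\ge 3$ at precisely this step.
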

\begin{proof}
For this set $K$, the corresponding digit set $\mathcal{D}=\{0,d\}$ and the similarity dimension $s=\log_{N}^2$, which is also the Hausdorff dimension of $K$ since the IFS $\{f_d\}_{d\in\mathcal{D}}$ satisfies the OSC. 
Considering a sequence of convex sets $U_m=[0, \sum\limits_{j=0}^{m-1}N^j\cdot d]=[0, \frac{N^m-1}{N-1}d]$ and the definition of $\mathcal{E}_s^+(\mu)$, where $\mu$ is defined by (\ref{e-1-2}), we have
\begin{eqnarray}\label{e-3-13}
\mathcal{E}_s^+(\mu)&=&\limsup\limits_{r\to\infty}\sup\limits_{\text{diam} (U)\ge r>0}\frac{\mu(U)}{[\text{diam} (U)]^s}\nonumber\\
&\ge& \lim\limits_{m\to\infty}\frac{\mu(U_m)}{[\text{diam} (U_m)]^s}=\lim\limits_{m\to\infty}\frac{2^m}{(\frac{N^m-1}{N-1}d)^s}=(\frac{N-1}{d})^s.
\end{eqnarray}
Since, in dimension one any convex set is an interval, we can let $U=[a,b], \ a,b\in\mathbb{R}$ and assume that $\ell=\sum\limits_{j=0}^{m_1}N^jr_j$ and $s=\sum\limits_{j=0}^{m_1}N^jd_j$ with $r_j,d_j\in\mathcal{D}$ are the largest and smallest elements of  $\mathcal{D}_{\infty}$ which belong to $[a,b]$ with $r_{m_1}\ne 0$. Then we have $a\le \sum\limits_{j=0}^{m_1}N^jd_j \le \sum\limits_{j=0}^{m_1}N^jr_j\le b$. To obtain an upper-bound for $\mathcal{E}_s^{+}(\mu)$, we can assume, without loss of generality, that $[a,b]=[\sum\limits_{j=0}^{m_1}N^jd_j,\sum\limits_{j=0}^{m_1}N^jr_j]$. Furthermore, $\mu([0,b-a])\ge \mu([a,b])$ by Lemma \ref{l-3-4}, i.e. for intervals having the same length, the one having the maximal number of elements in $\mathcal{D}_{\infty}$ has $0$ as its left boundary point. Hence, we can assume that $a=0$ and $b=\sum\limits_{j=0}^{m}N^jr_j$ with $r_m=d$ and $r_j\in\mathcal{D}$ for $j=1,\dotsc,m-1$. Using Lemma \ref{l-3-5}, we have
\begin{eqnarray*}
\mu([0,b])=\mu([0,\sum\limits_{j=0}^{m}N^jr_j])=\sum\limits_{j=0}^{m}2^j\frac{r_j}{d}+1.
\end{eqnarray*}
On the other hand, if $\sum\limits_{j=0}^{m-1}N^j\cdot d< b\le \sum\limits_{j=0}^{m}N^j\cdot d$, then $b=\sum\limits_{j=0}^{m}N^jr_j$ with $r_m=d$ and $r_j\in\mathcal{D}$ for $0\le j<m$ and we have
\begin{eqnarray}\label{e-3-14}
\frac{\mu([0,b])}{b^s}\le\frac{\mu([0,\sum\limits_{j=0}^{m}N^j\cdot d])}{(\sum\limits_{j=0}^{m}N^j\cdot d)^s}&\Longleftrightarrow& \frac{\sum\limits_{j=0}^{m}2^j\frac{r_j}{d}+1}{(\sum\limits_{j=0}^{m}N^jr_j)^s}\le \frac{2^{m+1}}{(\sum\limits_{j=0}^{m}N^j\cdot d)^s}\nonumber\\
&\Longleftrightarrow &\frac{(\sum\limits_{j=0}^{m}N^j\cdot d)^s}{(\sum\limits_{j=0}^{m}N^jr_j)^s}\le \frac{2^{m+1}}{\sum\limits_{j=0}^{m}2^{j}\frac{r_j}{d}+1}.
\end{eqnarray}
Next, we will prove (\ref{e-3-14}). Since $\sum\limits_{j=0}^{m}N^jr_j\le \sum\limits_{j=0}^{m}N^j\cdot d$ and $0<s<1$, we have the inequality
$$\frac{(\sum\limits_{j=0}^{m}N^j\cdot d)^s}{(\sum\limits_{j=0}^{m}N^jr_j)^s}<\frac{\sum\limits_{j=0}^{m}N^j\cdot d}{\sum\limits_{j=0}^{m}N^jr_j}$$
 and thus, the inequality 
\begin{eqnarray}\label{e-3-17}
\frac{\sum\limits_{j=0}^{m}N^j\cdot d}{\sum\limits_{j=0}^{m}N^jr_j}\le \frac{2^{m+1}}{\sum\limits_{j=0}^{m}2^{j}\frac{r_j}{d}+1}
\end{eqnarray}
would imply (\ref{e-3-14}). Moreover, 
\begin{eqnarray*}
\frac{\sum\limits_{j=0}^{m}N^j\cdot d}{\sum\limits_{j=0}^{m}N^jr_j}\le \frac{2^{m+1}}{\sum\limits_{j=0}^{m}2^{j}\frac{r_j}{d}+1}&\Longleftrightarrow&
 \frac{\frac{N^{m+1}-1}{N-1}d}{\sum\limits_{j=0}^{m}N^jr_j}\le \frac{2^{m+1}}{\sum\limits_{j=0}^{m}2^{j}\frac{r_j}{d}+1}\\
&\Longleftrightarrow&(\sum\limits_{j=0}^{m}2^{j}\frac{r_j}{d}+1)(\frac{N^{m+1}-1}{N-1}d)\le \sum\limits_{j=0}^{m}2^{m+1}N^jr_j,
\end{eqnarray*}
which is equivalent to 
$$\sum\limits_{j=0}^{m-1}(2^{j}\frac{N^{m+1}-1}{N-1}-2^{m+1}N^j)r_j\le \frac{N^m2^{m}(N-2)+2^m-(N^{m+1}-1)}{N-1}d$$
the last inequality is obtained using $r_m=d$. Note that $2^{j}\frac{N^{m+1}-1}{N-1}-2^{m+1}N^j\ge 0$ for any $0\le j\le m-1$ since $N\ge 3$ and 
\begin{eqnarray*}
\sum\limits_{j=0}^{m-1}(2^{j}\frac{N^{m+1}-1}{N-1}-2^{m+1}N^j)d=\frac{N^m2^{m}(N-2)+2^m-(N^{m+1}-1)}{N-1}d.
\end{eqnarray*}
(\ref{e-3-17}) follows and thus, (\ref{e-3-14}) holds. Then by the definition of $\mathcal{E}_s^+(\mu)$ and the above argument, we have
\begin{eqnarray}\label{e-3-15}
\mathcal{E}_s^+(\mu)\le\lim\limits_{m\to\infty}\frac{\mu[0,d\sum\limits_{j=0}^m N^j]}{(d\sum\limits_{j=0}^{m}N^j)^s}=\lim\limits_{m\to\infty}\frac{2^{m+1}}{(\frac{N^{m+1}-1}{N-1}d)^{s}}=(\frac{N-1}{d})^{s}.
\end{eqnarray}
It follows from (\ref{e-3-13}) and (\ref{e-3-15}) that $\mathcal{E}_s^+(\mu)=(\frac{N-1}{d})^{s}$. Thus, $\mathcal{H}^s(K)=(\frac{N-1}{d})^{-s}$ by Theorem \ref{t-3-3}.
\end{proof}

Using the result of Proposition \ref{ex-3-1}, it is easy to compute the Hausdorff measure of the middle Cantor set $K$ satisfying $3K=K\bigcup (K+2)$ associated with its Hausdorff dimension $s=\log_3^2$. That is,
$$\mathcal{H}^s(K)=(\frac{3-1}{2})^{-\log_3^2}=1.$$

\end{document}